\declaretheoremstyle[headfont=\normalfont]{normalhead}
\newtheorem{lemma}{Lemma}[section]
\newtheorem{theorem}[lemma]{Theorem}
\newtheorem{proposition}[lemma]{Proposition}
\newtheorem{corollary}[lemma]{Corollary}
\newtheorem{definition}[lemma]{Definition}
\newtheorem{remark}[lemma]{Remark}
\newcounter{mt}
\newtheorem{maintheorem}[mt]{Theorem}
\newtheorem{maincorollary}[mt]{Corollary}
\newcommand{\R}{\mathbb{R}}
\newcommand{\Val}{\mathrm{Val}}
\newcommand{\vol}{\mathrm{vol}}
\newcommand{\supp}{\mathrm{supp\ }}
\newcommand{\diam}{\mathrm{diam}}
\newcommand{\sign}{\mathrm{sign}}
\newcommand{\SO}{\mathrm{SO}}
\newcommand{\nc}{\mathrm{nc}}
\newcommand{\GL}{\mathrm{GL}}
\newcommand{\VConv}{\mathrm{VConv}}
\newcommand{\Conv}{\mathrm{Conv}}
\newcommand{\Hess}{\mathrm{Hess}}
\newcommand{\D}{\mathbb{D}}
\author{Jonas Knoerr}
\title{Zonal valuations on convex bodies}
\date{}
\newcommand{\Addresses}{{
		\bigskip
		\footnotesize
		
		Jonas Knoerr, \textsc{Institute of Discrete Mathematics and Geometry, TU Wien, Wiedner Hauptstrasse 8-10, 1040 Wien, Austria}\par\nopagebreak
		\textit{E-mail address}: \texttt{jonas.knoerr@tuwien.ac.at}
		
		\medskip
}}
\def\blfootnote{\xdef\@thefnmark{}\@footnotetext}
\begin{document}
\maketitle
\begin{abstract}
	A complete classification of all zonal, continuous, and translation invariant valuations on convex bodies is established. The valuations obtained are expressed as principal value integrals with respect to the area measures. The convergence of these  principal value integrals is obtained from a new weighted version of an inequality for the volume of spherical caps due to Firey. For Minkowski valuations, this implies a refinement of the convolution representation by Schuster and Wannerer in terms of singular integrals. As a further application, a new proof of the classification of $\SO(n)$-invariant, continuous, and dually epi-translation invariant valuations on the space of finite convex functions by Colesanti, Ludwig, and Mussnig is obtained.
\end{abstract}

\section{Introduction}
Let $\mathcal{K}(\R^n)$ denote the space of convex bodies in $\R^n$, that is, the set of all non-empty, compact, and convex subsets of $\R^n$ equipped with the Hausdorff metric. A map $\mu:\mathcal{K}(\R^n)\rightarrow (\mathcal{A},+)$ with values in an Abelian semi-group is called a valuation if
\begin{align*}
	\mu(K)+\mu(L)=\mu(K\cup L)+\mu(K\cap L)
\end{align*}
for all $K,L\in\mathcal{K}(\R^n)$ such that $K\cup L$ is convex. This property is shared by many geometric functionals, for example Euler characteristic, surface area, and volume, and consequently, valuations play a key role in various areas of mathematics, including convex and discrete geometry. In particular, much of the tremendous progress in integral geometry  relies on classification results for various scalar or tensor-valued valuations (see \cite{AleskerDescriptioncontinuousisometry1999,AleskerDescriptiontranslationinvariant2001,AleskerEtAlHarmonicanalysistranslation2011,BernigFuHermitianintegralgeometry2011,BernigHugKinematicformulastensor2018,FuStructureunitaryvaluation2006,LudwigReitznerclassification$SLn$invariant2010,Wannerermoduleunitarilyinvariant2014} for some of these results).

A different line of research in valuation theory is focused on \emph{Minkowski valuations}, that is, valuations with values in the space $\mathcal{K}(\R^n)$ equipped with Minkowski addition, a notion introduced by Ludwig \cite{LudwigProjectionbodiesvaluations2002,LudwigMinkowskivaluations2005}. 
In her seminal work, Ludwig showed that a key operator in convex geometry, the projection body map, is uniquely characterized by the valuation property, continuity, translation invariance, and its equivariance properties with respect to the special linear group (see \cite{AbardiaBernigProjectionbodiescomplex2011,HaberlMinkowskivaluationsintertwining2012,KiderlenBlaschkeMinkowskiendomorphisms2006,LudwigEllipsoidsmatrixvalued2003,SchneiderEquivariantendomorphismsspace1974,SchusterWannerer$GLn$contravariantMinkowski2012} for related results). 
This notion goes back to Minkowski and has since been an important tool in the study of projections of convex bodies (see for example \cite{GardnerGeometrictomography2006,SchneiderConvexbodiesBrunn2014} for an overview of some of these results). In particular, projection bodies and their polar bodies feature prominently in a variety of inequalities (see, for example, \cite{LutwakInequalitiesmixedprojection1993,PettyIsoperimetricproblems1971,ZhangRestrictedchordprojection1991}, and \cite{HaberlSchusterGeneral$L_p$affine2009,LutwakEtAl$L_p$affineisoperimetric2000,WangaffineSobolevZhang2012,ZhangaffineSobolevinequality1999} for further generalizations).\\

More recently, it was observed that several inequalities for projection bodies extend to much larger classes of translation invariant Minkowski valuations intertwining the group $\SO(n)$ (see \cite{BergEtAlLogconcavityproperties2018,HaberlSchusterAffinevs.Euclidean2019,ParapatitsSchusterSteinerformulaMinkowski2012,SchusterCroftonmeasuresMinkowski2010}). Most of these results rely on suitable representations of these valuations in terms of convolution operators and the regularity properties of the associated integral kernels (see \cite{BraunerOrtegaMorenoFixedPointsMean2023,OrtegaMorenoIterationsMinkowskivaluations2023,OrtegaMorenoSchusterFixedpointsMinkowski2021}).\\
A key representation result was established by Schuster and Wannerer in \cite{SchusterWannererMinkowskivaluationsgeneralized2018}. Let $\mathcal{M}_o(S^{n-1})$ and $C_o(S^{n-1})$ denote the spaces of signed Radon measures and continuous functions on $S^{n-1}$ that are \emph{centered}, that is, that have their center of mass at the origin.  Here and in the following, we identify $\SO(n-1)$ with the subgroup of $\SO(n)$ that fixes the axis spanned by the vector $e_n$.
\begin{theorem}[Schuster--Wannerer \cite{SchusterWannererMinkowskivaluationsgeneralized2018} Theorem 2]
	\label{theorem:Schuster_Wannerer_Minkowski}	
		If $\Phi:\mathcal{K}(\R^n)\rightarrow\mathcal{K}(\R^n)$ is a  continuous Minkowski valuation  which is translation invariant and $\SO(n)$-equivariant, then there exist uniquely determined $c_0,c_n\ge 0$, $\SO(n-1)$-invariant $\nu_j\in \mathcal{M}_o(S^{n-1})$ and $f_{n-1}\in C_o(S^{n-1})$ such that
		\begin{align*}
			h_{\Phi(K)}=c_0+\sum_{j=1}^{n-2}S_j(K)*\nu_j+S_{n-1}(K)*f_{n-1}+c_nV_n(K)
		\end{align*}
		for every $K\in\mathcal{K}(\R^n)$.
\end{theorem}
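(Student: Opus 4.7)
The plan is to reduce the statement to a classification of scalar valuations. For each fixed $u\in S^{n-1}$, set $\mu_u(K):=h_{\Phi(K)}(u)$. Since support functions are additive under Minkowski addition, the Minkowski valuation property of $\Phi$ translates into the ordinary valuation property of $\mu_u$, which is thus a continuous, translation invariant, real-valued valuation on $\mathcal{K}(\R^n)$. By McMullen's homogeneous decomposition, $\mu_u(K)=\sum_{j=0}^n\mu_u^{(j)}(K)$ with $\mu_u^{(j)}$ homogeneous of degree $j$ in $K$. Continuity of $K\mapsto h_{\Phi(K)}$ in the supremum norm on $S^{n-1}$ moreover implies that $u\mapsto\mu_u^{(j)}(K)$ is continuous for each fixed $K$ and each $j$.

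The extremal degrees can be handled directly. The degree zero part $\mu_u^{(0)}$ is independent of $K$, so $u\mapsto\mu_u^{(0)}$ coincides with $h_{\Phi(\{0\})}$. As $\Phi(\{0\})=\rho\Phi(\{0\})$ for every $\rho\in\SO(n)$, this body is a Euclidean ball centered at the origin of radius $c_0\geq 0$, yielding the constant term. For degree $n$, $\mu_u^{(n)}(K)=\bar c_n(u)V_n(K)$ for some continuous $\bar c_n$ on $S^{n-1}$, and $\SO(n)$-equivariance forces $\bar c_n$ to be constant, producing $c_n\geq 0$.

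For each intermediate degree $1\leq j\leq n-1$, consider $K\mapsto\mu_{e_n}^{(j)}(K)$. Since the stabilizer of $e_n$ in $\SO(n)$ is $\SO(n-1)$, this is a continuous, translation invariant, $\SO(n-1)$-invariant, $j$-homogeneous scalar valuation. The classical representation of such valuations via the area measures $S_j(K,\cdot)$ (going back to McMullen and Schneider) yields an $\SO(n-1)$-invariant signed measure $\nu_j$ on $S^{n-1}$ with $\mu_{e_n}^{(j)}(K)=\int_{S^{n-1}}\nu_j\,dS_j(K,\cdot)$. Passing from $e_n$ to a general $u\in S^{n-1}$ via any rotation $\rho\in\SO(n)$ with $\rho e_n=u$, the $\SO(n)$-equivariances of $\Phi$ and of the area measures combine to give
\begin{align*}
	\mu_u^{(j)}(K)=\mu_{e_n}^{(j)}(\rho^{-1}K)=\int_{S^{n-1}}\nu_j(\rho^{-1}v)\,dS_j(K,v)=(S_j(K,\cdot)*\nu_j)(u).
\end{align*}
Uniqueness of the centered representatives $\nu_j$ and $f_{n-1}$ follows from the classical identity $\int_{S^{n-1}}v\,dS_j(K,v)=0$, which annihilates any linear component of the kernel and reflects the fact that only the centered part of the measure is detected by the area measures.

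The main obstacle I expect is the regularity promotion at $j=n-1$: upgrading the signed measure produced by the classification to a genuinely continuous function $f_{n-1}\in C_o(S^{n-1})$. For this I would exploit that $h_{\Phi(K)}$ must be a sublinear (not merely continuous) function of $u$ for every $K$, combined with the strong position of the surface area measure via the Minkowski existence and uniqueness theorem and an approximation argument through smooth convex bodies whose densities span a dense subspace. A secondary difficulty lies in actually invoking the classification of $\SO(n-1)$-invariant, $j$-homogeneous, continuous, translation invariant scalar valuations for all intermediate degrees, which rests on the full machinery of translation invariant valuation theory developed by McMullen and Schneider.
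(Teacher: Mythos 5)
This theorem is quoted by the paper from Schuster--Wannerer \cite{SchusterWannererMinkowskivaluationsgeneralized2018}; the paper gives no proof of it, so there is nothing internal to compare against. Judged on its own merits, your proposal has a genuine gap at its central step.

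Your reduction to real-valued valuations $\mu_u(K)=h_{\Phi(K)}(u)$, the homogeneous decomposition, and the treatment of degrees $0$ and $n$ are all sound. The degree $n-1$ case is also fine, but not for the reason you give: McMullen's theorem (as the paper records) already produces a \emph{continuous} function $f\in C_o(S^{n-1})$ directly, so there is no ``regularity promotion'' to perform in that degree. Your worry is misplaced.

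The real problem is the claim that for $1\le j\le n-2$ there is a ``classical representation of such valuations via the area measures $S_j(K,\cdot)$ (going back to McMullen and Schneider)'' producing a signed measure $\nu_j$ with $\mu_{e_n}^{(j)}(K)=\int \nu_j\,dS_j(K)$. No such classical result exists. The paper states explicitly that for $1\le j\le n-2$ no full characterization of $\Val_j(\R^n)^{\SO(n-1)}$ was known prior to this work, and the entire technical content of the Schuster--Wannerer theorem you are trying to prove is precisely the existence of the measures $\nu_j$ in these intermediate degrees. Their proof does not follow from McMullen--Schneider at all; it relies on Alesker's theory of smooth valuations (Irreducibility Theorem, G\r{a}rding smooth vectors), their own classification of smooth spherical valuations (quoted here as Theorem~\ref{theorem:SchusterWannererClassification}), and a nontrivial regularity result for generalized translation invariant valuations to pass from smooth to arbitrary continuous valuations. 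What you describe as a ``secondary difficulty'' is in fact the whole theorem; as written, your argument assumes the conclusion in the hardest range of degrees. The rotation argument you give for passing from $e_n$ to general $u$, and the uniqueness via the centering identity $\int v\,dS_j(K,v)=0$, are fine once the existence of $\nu_j$ is granted, but that existence is exactly what is missing.
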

Here, $h_K$ denotes the support function of a convex body $K\in\mathcal{K}(\R^n)$, $S_j(K)$ denotes Alexandrov's area measures (compare \cite{SchneiderConvexbodiesBrunn2014}) and $*$ denotes the spherical convolution of measures, see \cite[Section 2]{SchusterWannererMinkowskivaluationsgeneralized2018} for details. Note that the result provides a precise representation formula for these Minkowski valuations, but does not establish a characterization of these functionals, since the right hand side of this equation does in general not define a support function for arbitrary functions and measures.\\

Let us remark that this representation can be refined for homogeneous valuations. Dorrek \cite{DorrekMinkowskiendomorphisms2017} showed that in this case the measures $\nu_j$ are absolutely continuous with respect to the spherical Lebesgue measure with density in $L^1(S^{n-1})$. More recently, Brauner and Ortega-Moreno established that the functions are actually locally Lipschitz continuous outside of the poles \cite{BraunerOrtegaMorenoFixedPointsMean2023}. However, these results do not apply to general Minkowski valuations, since the different homogeneous components do in general not correspond to homogeneous Minkowski valuations, compare \cite[Theorem 4.4]{DorrekMinkowskiendomorphisms2017}.\\

The proof of Theorem \ref{theorem:SchusterWannererClassification} given in \cite{SchusterWannererMinkowskivaluationsgeneralized2018} relies on a regularity result for the space $\Val(\R^n)^{\SO(n-1)}$ of all real-valued, continuous, translation invariant, and $\SO(n-1)$-invariant valuations, considered as a subspace of the space of generalized spherical valuations. We refer to \cite[Theorem 3]{SchusterWannererMinkowskivaluationsgeneralized2018} for the result as well as the necessary background on generalized valuations (see also \cite{AleskerFaifmanConvexvaluationsinvariant2014,BernigFaifmanGeneralizedtranslationinvariant2016}). Let us, however, remark, that the proof only provides a (non-sharp) regularity result, not a full description of these valuations.\\

The main result of this article provides a complete characterization of $\Val(\R^n)^{\SO(n-1)}$. Since $\SO(n-1)$-invariant functions on the sphere are often called \emph{zonal} functions, we will call valuations of this type zonal as well. Before we state our results, we require some background about the space $\Val(\R^n)$ of all continuous and translation invariant valuations $\mu:\mathcal{K}(\R^n)\rightarrow\R$. As shown by McMullen \cite{McMullenValuationsEulertype1977}, this space admits a homogeneous decomposition, 
\begin{align}
	\label{eq:McMullenDecomp}
	\Val(\R^n)=\bigoplus_{j=0}^n\Val_j(\R^n),
\end{align}
where $\mu\in\Val_j(\R^n)$ if and only if it is $j$-homogenous, that is, $\mu(tK)=t^j\mu(K)$ for all $t\ge 0$ and $K\in\mathcal{K}(\R^n)$. Furthermore, we have the following complete characterization of some of the homogeneous components:
\begin{itemize}
	\item $\Val_0(\R^n)$ is spanned by the Euler characteristic (which is equal to $1$ for all $K\in\mathcal{K}(\R^n))$.
	\item $\Val_n(\R^n)$ is spanned by the Lebesgue measure, as shown by Hadwiger \cite{HadwigerVorlesungenuberInhalt1957}.
	\item For every $\mu\in \Val_{n-1}(\R^n)$ there exists a unique $f\in C_o(S^{n-1})$ such that 
	\begin{align*}
		\mu(K)=\int_{S^{n-1}}f(v)dS_{n-1}(K,v),
	\end{align*}
	as shown by McMullen \cite{McMullenContinuoustranslationinvariant1980}. Conversely, any such function defines a continuous valuation by the same formula. Note that $f$ has to be a zonal function if $\mu$ is zonal.
\end{itemize}
Let us remark that in the remaining cases $1\le j\le n-2$ no full characterization of these valuations is known. However, due to results by Alesker \cite{AleskerDescriptiontranslationinvariant2001}, there exist powerful tools to describe various dense subspaces, which form the foundation for our approach.\\

The homogeneous decomposition and the three results listed above reduce the characterization problem to the description of the remaining components  $\Val_j(\R^n)^{\SO(n-1)}$ for $1\le j\le n-2$. Let us introduce the following classes of functions for $1\le j\le n-2$:
\begin{align*}
	D^{\frac{n-j-1}{2}}:=\left\{\right. f\in C((-1,1)):&\lim\limits_{s\rightarrow\pm 1}(1-s^2)^{\frac{n-j-1}{2}}f(s)=0,\\
	&\left. \text{the limits}~\lim_{s\rightarrow\pm1}\int_0^sf(s)(1-s^2)^{\frac{n-j-1}{2}-1}ds~\text{exist and are finite}\right\}.
\end{align*}
For $f\in D^{\frac{n-j-1}{2}}$, the function $v\mapsto f(v_n)$ is in general not integrable with respect to $S_j(K)$ for $K\in\mathcal{K}(\R^n)$. Nevertheless, these functions induce continuous valuations in terms of the following principal value integral.
\begin{maintheorem}
	\label{maintheorem:SingularValuations}
	Let $1\le j\le n-2$. For every $f\in D^{\frac{n-j-1}{2}}$ and every $K\in\mathcal{K}(\R^n)$ the limit
	\begin{align*}
		\Phi_j(f)[K]=\lim\limits_{\epsilon\rightarrow0}\int_{\{v\in S^{n-1}:|v_n|\le 1-\epsilon\}} f(v_n)dS_j(K,v)
	\end{align*}
	exists, and this defines a continuous valuation $\Phi_j(f)\in \Val_j(\R^n)^{\SO(n-1)}$.
\end{maintheorem}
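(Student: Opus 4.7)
The plan is to control the area measure $S_j(K,\cdot)$ on spherical annuli around the axis $\R e_n$ with sufficient precision to make sense of the conditionally convergent integral defining $\Phi_j(f)[K]$.

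First, I would establish a weighted refinement of Firey's spherical cap inequality. Firey's classical bound gives an estimate of the form $S_j(K,\{v \in S^{n-1}: v_n \ge 1-h\}) \le c_{n,j}(K)\, h^{(n-j-1)/2}$; I need the sharper differentiated version
\begin{align*}
S_j\bigl(K,\{v \in S^{n-1}: s < v_n \le s+h\}\bigr) \le C_K \cdot h \cdot (1-s^2)^{(n-j-1)/2 - 1}
\end{align*}
for $s$ close to $\pm 1$ and $h$ small. Equivalently, the pushforward of $S_j(K,\cdot)$ under $v \mapsto v_n$ is absolutely continuous near $\pm 1$ with density bounded by a constant multiple of $(1-s^2)^{(n-j-1)/2 - 1}$, where the constant can be chosen uniformly on bounded subsets of $\mathcal{K}(\R^n)$.

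Second, for fixed $K$ I would show that the net $I_\epsilon(K) := \int_{\{|v_n|\le 1-\epsilon\}} f(v_n)\, dS_j(K,v)$ is Cauchy as $\epsilon \to 0$. Splitting $I_\epsilon(K) = I_\epsilon^+(K) + I_\epsilon^-(K)$ according to the sign of $v_n$ and passing to the pushforward, each half becomes a one-variable Stieltjes integral. Integrating by parts, the boundary contribution at $s = \pm(1-\epsilon)$ has size $|f(\pm(1-\epsilon))|\cdot O(\epsilon^{(n-j-1)/2})$ by Firey's classical estimate, which tends to zero by the first condition in the definition of $D^{\frac{n-j-1}{2}}$; the remaining bulk integral is, thanks to the weighted Firey estimate, comparable (up to $K$-dependent factors) to $\int_0^{\pm(1-\epsilon)} f(s)(1-s^2)^{(n-j-1)/2 - 1}\, ds$, which has a finite limit as $\epsilon \to 0$ by the second condition. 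Together these give convergence and define $\Phi_j(f)[K]$.

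Third, the algebraic and regularity properties are inherited from the truncations. For each $\epsilon > 0$, $v \mapsto f(v_n)\mathbf{1}_{\{|v_n|\le 1-\epsilon\}}$ is a bounded zonal continuous function on $S^{n-1}$, so integration against $S_j(K,\cdot)$ yields a valuation $I_\epsilon \in \Val_j(\R^n)^{\SO(n-1)}$. The valuation property, translation invariance, $j$-homogeneity, and $\SO(n-1)$-invariance therefore pass to the pointwise limit $\Phi_j(f)$. Continuity of $\Phi_j(f)$ on $\mathcal{K}(\R^n)$ will follow once the Cauchy estimate above is made uniform on bounded subsets of $\mathcal{K}(\R^n)$, which is possible provided the constant in the weighted Firey inequality is uniform on such subsets. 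The main obstacle I anticipate is the weighted Firey inequality itself: Firey's slicing argument for the classical cap bound controls only the total mass of a cap and does not immediately yield the correct $(1-s^2)^{(n-j-1)/2-1}$ weight on annular regions. Establishing this likely requires reworking Firey's argument via the mixed volume representation of $S_j(K,\cdot)$ or an integral-geometric (Cauchy--Kubota-type) formula, tracking carefully how the weight arises from the geometry of thin annuli near the poles.
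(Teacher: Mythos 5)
Your plan has a genuine gap at its core. The ``weighted refinement of Firey's inequality'' you propose --- the estimate $S_j\bigl(K, \{s < v_n \le s+h\}\bigr) \le C_K\, h\, (1-s^2)^{\frac{n-j-1}{2}-1}$ for small $h$, equivalently absolute continuity of the pushforward of $S_j(K,\cdot)$ under $v \mapsto v_n$ near $\pm 1$ --- is false. The cones $C_h=\mathrm{conv}(\D^{n-1},\{he_n\})$ already provide a counterexample: by Lemma \ref{lemma:areaMeasuresCones}, $S_j(C_h)$ charges the $(n-2)$-sphere $\{v_n=(1+h^2)^{-1/2}\}$ with positive mass $\omega_{n-1}\sqrt{1+h^2}\bigl(h/\sqrt{1+h^2}\bigr)^{n-j-1}$, so the pushforward has an atom at $(1+h^2)^{-1/2}$. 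As $h\downarrow 0$ this atom moves towards $1$ while $\{C_h\}_{0<h\le 1}$ stays in a fixed bounded subset of $\mathcal{K}(\R^n)$, so the uniformity on bounded sets that you need in step 3 cannot hold either. These atoms are not a pathology to be excluded: they are central to the theory (they produce the boundary term in Lemma \ref{lemma:IntegrationCones} and thus the definition of $I_a$, around which the reconstruction argument is built). A secondary problem is that your Stieltjes integration by parts requires either $f$ of bounded variation or absolute continuity of the pushforward, and neither is available; $f\in D^{\frac{n-j-1}{2}}$ is merely continuous on $(-1,1)$.

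The paper sidesteps both issues by proving only the integrated bound $\bigl|\int_{S^{n-1}} f(v_n)\,dS_j(K,v)\bigr|\le C_{n,j}\|h_K\|_\infty^j\|f\|_{D^{\frac{n-j-1}{2}}}$ for $f\in C([-1,1])$ (Theorem \ref{theorem:estimateIntegrationSj}), obtained via the correspondence between the normal cycle of $K$ and the differential cycle of $h_K(\cdot,-1)$ together with estimates for Monge--Amp\`ere-type measures. This makes $\Phi_j$ a bounded linear map from $(C([-1,1]),\|\cdot\|_{D^{\frac{n-j-1}{2}}})$ into the Banach space $\Val_j(\R^n)^{\SO(n-1)}$, which then extends by continuity across the dense inclusion $C([-1,1])\subset D^{\frac{n-j-1}{2}}$ of Lemma \ref{lemma:ContFctDenseDj}. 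Classical Firey drops out afterwards as a corollary of Theorem \ref{theorem:estimateIntegrationSj}, not as a prerequisite, and is used only to identify the extension with the stated principal value in Theorem \ref{theorem:PrincipalValue}. In short, the correct quantity to control is a weak, dual bound on the integral against continuous densities, not a pointwise density bound on the pushforward; strengthening Firey to a density estimate is the wrong direction.
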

We provide a precise characterization of the functions in $D^{\frac{n-j-1}{2}}$ for which the principal value is a proper integral for every convex body $K\in\mathcal{K}(\R^n)$, which applies in particular to Berg's functions, compare Corollary \ref{corollary:IntegrabilityBergsFunctions}.\\

As suggested by the notation, our approach is centered around the properties of the map 
\begin{align*}
	\Phi_j:D^{\frac{n-j-1}{2}}\rightarrow \Val_j(\R^n)^{\SO(n-1)}.
\end{align*}
More precisely, the proof of Theorem \ref{maintheorem:SingularValuations} is based on extending this map from $C([-1,1])$ to $D^{\frac{n-j-1}{2}}$ by continuity with respect to the norm 
\begin{align*}
	\|f\|_{D^{\frac{n-j-1}{2}}}:=\sup_{s\in (-1,1)}(1-s^2)^{\frac{n-j-1}{2}}|f(s)|+\sup_{s\in (-1,1)}\left|\int_{0}^s(1-t^2)^{\frac{n-j-1}{2}-1}f(t)dt\right|
\end{align*}
and the natural topology on $\Val(\R^n)$, compare Section\ref{section:Preliminaries}. 

Our main result shows that the valuations constructed above provide a complete characterization of the space $\Val_j(\R^n)^{\SO(n-1)}$ for $1\le j\le n-2$.
\begin{maintheorem}
	\label{maintheorem:Classification}
	Let $1\le j\le n-2$. For every $\mu\in \Val_j(\R^n)^{\SO(n-1)}$ there exists $f\in D^{\frac{n-j-1}{2}}$ such that $\mu=\Phi_j(f)$. This function is unique up to the addition of a linear function.
\end{maintheorem}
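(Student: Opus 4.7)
My approach is to show that $\Phi_j$ descends to a linear isomorphism from the quotient $D^{\frac{n-j-1}{2}}/L$ onto $\Val_j(\R^n)^{\SO(n-1)}$, where $L$ denotes the one-dimensional subspace spanned by $s\mapsto s$ (the zonal restriction of a linear function on $\R^n$). The inclusion $L\subseteq\ker\Phi_j$ is immediate from the mixed volume formula: since $h_{\{e_n\}}(v)=v_n$, we have
\[
\Phi_j(s)[K]=\int_{S^{n-1}}v_n\,dS_j(K,v)=n\,V(K,\dots,K,B,\dots,B,\{e_n\})=0,
\]
mixed volumes involving a singleton being trivial. The reverse inclusion $\ker\Phi_j\subseteq L$ will fall out of the a priori estimate below.

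For surjectivity I would use a density-plus-closed-image strategy. By Alesker's irreducibility theorem, smooth translation invariant valuations are dense in $\Val_j(\R^n)$, and passing to $\SO(n-1)$-invariants preserves density. Using the integral representation of smooth zonal valuations that underlies the proof of Theorem~\ref{theorem:Schuster_Wannerer_Minkowski}---combined with the observation that $\SO(n-1)$-invariant smooth functions on $S^{n-1}$ depend only on $v_n$---every smooth $\mu\in\Val_j(\R^n)^{\SO(n-1)}$ can be written as $\mu=\Phi_j(f)$ for some $f\in C^\infty([-1,1])\subseteq D^{\frac{n-j-1}{2}}$. Hence $\Phi_j(D^{\frac{n-j-1}{2}})$ is dense in $\Val_j(\R^n)^{\SO(n-1)}$. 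To close the image I would establish the a priori estimate
\[
\|[f]\|_{D^{\frac{n-j-1}{2}}/L}\;\le\;C\,\|\Phi_j(f)\|_{\Val_j}\quad\text{for all }f\in C([-1,1]).
\]
Granted this, any Cauchy sequence $\Phi_j(f_k)\to\mu$ lifts to a Cauchy sequence $[f_k]$ in the Banach space $D^{\frac{n-j-1}{2}}/L$, whose limit $[f]$ satisfies $\mu=\Phi_j(f)$ by the continuity of $\Phi_j$ asserted in Theorem~\ref{maintheorem:SingularValuations}; the same estimate simultaneously delivers $\ker\Phi_j=L$.

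The a priori estimate is the main obstacle. I would test $\Phi_j(f)[K]$ against one-parameter families of convex bodies whose area measures $S_j(K,\cdot)$ concentrate in small spherical caps centred at variable points near the poles $\pm e_n$, aiming to recover, separately, the weighted pointwise value $(1-s^2)^{\frac{n-j-1}{2}}|f(s)|$ and the weighted antiderivative $\int_0^s(1-t^2)^{\frac{n-j-1}{2}-1}f(t)\,dt$ that together define $\|\cdot\|_{D^{\frac{n-j-1}{2}}}$. Converting these pointwise recoveries into a uniform bound requires sharp two-sided control of the $S_j$-mass of spherical caps as a function of their aperture, and this is precisely what the new weighted Firey-type inequality advertised in the abstract is designed to provide.
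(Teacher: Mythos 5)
Your high-level framework---characterize the kernel, get density from Schuster--Wannerer, and close the image with an a priori estimate---is compatible with the paper's first proof, which effectively does carry out such a strategy. The kernel inclusion $L\subseteq\ker\Phi_j$ is fine (it is just the fact that $S_j(K)$ is centered). But the crucial step, the a priori estimate
\[
\|[f]\|_{D^{\frac{n-j-1}{2}}/L}\;\le\;C\,\|\Phi_j(f)\|,\qquad f\in C([-1,1]),
\]
is stated as a goal, not proved, and the sketch you give for it would not work as described.

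\textbf{Why the sketch fails as stated.} You propose to ``recover, separately'' the two quantities entering $\|\cdot\|_{D^{\frac{n-j-1}{2}}}$, namely $(1-s^2)^{a}|f(s)|$ and $\int_0^s(1-t^2)^{a-1}f(t)\,dt$, by testing $\mu=\Phi_j(f)$ against bodies whose $S_j$-measure concentrates near a single latitude. The natural such bodies are the cones $C_h$ (here $S_j(C_h)$ has a genuine atom at $v_n=(1+h^2)^{-1/2}$), but their area measure also carries the disk part, so what you actually read off is the \emph{coupled} quantity
\[
\phi_\mu(s)=\mu\bigl(C_{\frac{\sqrt{1-s^2}}{s}}\bigr)=\omega_{n-1}\,\sign(s)\left[s^{-1}(1-s^2)^{a}f(s)+2a\int_{-\sign(s)}^{s}f(t)(1-t^2)^{a-1}\,dt\right],
\]
and there is no family of convex bodies that isolates one term without the other. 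The paper's resolution is structural: it recognizes the right-hand side as an integral transform $I_a(f)$ taking values in a space $\mathcal{C}$ (with the weighted sup-norm $\sup_s|s|\,|u(s)|$), and it proves that the inverse transform $J_a:\mathcal{C}\to D^a$ is bounded (Proposition \ref{proposition:ContinutiyJa}). Your estimate then follows from $f\equiv J_a(\phi_\mu)$ modulo linear functions, not from decoupling.

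\textbf{A second, independent obstacle you do not address.} The cones $C_h$ are not uniformly bounded: as $h\to\pm\infty$ (equivalently $s\to 0$) their circumradius blows up, so the trivial bound $|\mu(C_h)|\lesssim\|\mu\|\cdot(\text{circumradius})^j$ gives nothing. Proving that $\mu\mapsto\phi_\mu$ is a bounded map $\Val_j(\R^n)\to\mathcal{C}$ requires the truncated-cone identity of Lemma \ref{lemma:relationTruncatedCones}, which rewrites $|s|\,\mu(C_h)$ in terms of bodies contained in $B_2(0)$ via the valuation property and homogeneity (Proposition \ref{proposition:EvaluationMapValToC}). Without some such device the ``test family'' you propose does not yield a norm bound at all.

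\textbf{Misattribution of the Firey-type inequality.} The weighted cap estimate (Theorem \ref{theorem:FireyVolumeSphericalCap}) is an \emph{upper} bound on $S_j(K)[C_r(v)]$, obtained in the paper as a corollary of Theorem \ref{theorem:estimateIntegrationSj}, and it is used to prove convergence of the principal value integral and continuity of $\Phi_j$ (Theorem \ref{maintheorem:SingularValuations}). It is not the two-sided concentration control your closed-image argument would need, and it plays no role in the a priori estimate for surjectivity.

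In short, the outline is in the right spirit (and the paper also offers a second, quite different proof via the Hard Lefschetz theorem), but the proposal leaves the core analytical step as a hope rather than a proof, and the specific plan for the a priori estimate contains two concrete gaps: the norm terms cannot be decoupled by any test family, and the test family you would naturally use is unbounded, requiring the truncated-cone renormalization that makes the argument work.
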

We provide two proofs of this result. Both rely on the fact that the function $f$ can be reconstructed by evaluating a given valuation in a family of cones with $e_n$ as axis of revolution. The first proof in Section \ref{section:ClassificationProof1} shows that the (right) inverse to this evaluation map is continuous with respect to the relevant topologies, which reduces the proof to an approximation result based on a description of smooth spherical valuations due to Schuster and Wannerer \cite{SchusterWannererMinkowskivaluationsgeneralized2018}. The second proof is based on an idea by Wannerer and relies on an implication of the Hard Lefschetz Theorem for spherical valuations (compare Section \ref{section:Prelim_valuationsBodies}), which reduces the proof to the $1$-homogeneous cases, where additional tools are available (see Section \ref{section:ClassificationProof2}).\\

Note that Theorem \ref{maintheorem:Classification} is equivalent to the fact that the map $\Phi_j:D^{\frac{n-j-1}{2}}\rightarrow\Val_j(\R^n)^{\SO(n-1)}$ is onto. Our construction implies the following stronger version of this result.

\begin{maintheorem}
	\label{maintheorem:topologicalIsomorphism}
	Let $D^{\frac{n-j-1}{2}}_o$ denote the quotient of $D^{\frac{n-j-1}{2}}$ by the $1$-dimensional subspace generated by linear functions. $\Phi_j$ descends to an isomorphism
	\begin{align*}
		D^{\frac{n-j-1}{2}}_o\rightarrow \Val_j(\R^n)^{\SO(n-1)}
	\end{align*}
	of topological vector spaces. 
\end{maintheorem}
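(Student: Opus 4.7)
The plan is to recognize that Theorems \ref{maintheorem:SingularValuations} and \ref{maintheorem:Classification} already produce a continuous linear bijection $\bar\Phi_j:D^{\frac{n-j-1}{2}}_o\to\Val_j(\R^n)^{\SO(n-1)}$; the extra content of Theorem \ref{maintheorem:topologicalIsomorphism} is continuity of the inverse, which I would extract from the open mapping theorem.

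\emph{Factoring through the quotient.} Theorem \ref{maintheorem:SingularValuations} asserts that $\Phi_j$ takes values in $\Val_j(\R^n)^{\SO(n-1)}$, and the construction sketched after its statement produces $\Phi_j$ by extending a continuous map on $C([-1,1])$ by continuity with respect to $\|\cdot\|_{D^{\frac{n-j-1}{2}}}$; hence $\Phi_j$ is continuous into $\Val(\R^n)$ equipped with its standard Banach norm. Theorem \ref{maintheorem:Classification} then gives surjectivity together with the fact that two functions in $D^{\frac{n-j-1}{2}}$ are mapped to the same valuation if and only if their difference is linear. In particular, the kernel of $\Phi_j$ is exactly the one-dimensional subspace spanned by $s\mapsto s$ (one checks directly that this function lies in $D^{\frac{n-j-1}{2}}$ for $j\le n-2$), so $\Phi_j$ descends to a continuous linear bijection
\begin{align*}
\bar\Phi_j:D^{\frac{n-j-1}{2}}_o\to\Val_j(\R^n)^{\SO(n-1)}.
\end{align*}

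\emph{Open mapping theorem.} To promote $\bar\Phi_j$ to a topological isomorphism it suffices to check both sides are Banach. On the target, $\Val(\R^n)$ is Banach, and $\Val_j(\R^n)^{\SO(n-1)}$ is closed in it: it is the intersection of the closed $j$-homogeneous summand of the McMullen decomposition \eqref{eq:McMullenDecomp} with the closed subspace of $\SO(n-1)$-invariants under the continuous $\SO(n-1)$-action on $\Val(\R^n)$. On the source, I would verify directly that $(D^{\frac{n-j-1}{2}},\|\cdot\|_{D^{\frac{n-j-1}{2}}})$ is Banach: for a Cauchy sequence $\{f_k\}$, the weighted sequence $g_k(s):=(1-s^2)^{\frac{n-j-1}{2}}f_k(s)$ is Cauchy in the Banach space $C_0((-1,1))$ of continuous functions vanishing at $\pm 1$ with sup norm, and the primitives $F_k(s):=\int_0^s(1-t^2)^{\frac{n-j-1}{2}-1}f_k(t)\,dt$ are Cauchy in $C([-1,1])$. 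Setting $f(s):=(\lim_k g_k)(s)/(1-s^2)^{\frac{n-j-1}{2}}$ and applying dominated convergence on each compact subinterval $[0,s]\subset(-1,1)$ identifies $\lim_k F_k$ with $\int_0^{\,\cdot}(1-t^2)^{\frac{n-j-1}{2}-1}f(t)\,dt$, so $f\in D^{\frac{n-j-1}{2}}$ and $f_k\to f$ in norm. Quotienting by the closed one-dimensional subspace of linear functions preserves the Banach property, and the open mapping theorem then forces $\bar\Phi_j^{-1}$ to be continuous.

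\emph{Main obstacle.} Once Theorems \ref{maintheorem:SingularValuations} and \ref{maintheorem:Classification} are in hand, what remains is functional analysis bookkeeping; the only mildly non-routine step is the completeness of $D^{\frac{n-j-1}{2}}$, where the weighted boundary-vanishing condition and the finite-primitives condition have to be propagated to the limit simultaneously. Continuity of the inverse is then automatic from the open mapping theorem.
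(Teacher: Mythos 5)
Your proposal is correct and follows essentially the same route as the paper: identify the kernel of $\Phi_j$ as the linear functions via Corollary~\ref{corollary:KernelPhij}, obtain a continuous linear bijection on the quotient from Theorems~\ref{maintheorem:SingularValuations} and~\ref{maintheorem:Classification}, observe that both sides are Banach (the paper invokes Lemma~\ref{lemma:DaBanach} for $D^{\frac{n-j-1}{2}}$, whereas you re-derive its completeness directly), and conclude with the open mapping theorem.
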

	Note that any complement to the space of linear functions in $D^{\frac{n-j-1}{2}}$ is isomorphic to the quotient via the quotient map. In particular, we may replace $D^{\frac{n-j-1}{2}}_o$ by any such complement in the statement of Theorem \ref{maintheorem:topologicalIsomorphism}. One canonical choice is the space
	\begin{align*}
		\left\{f\in D^{\frac{n-j-1}{2}}:\int_{-1}^1f(t)t(1-t^2)^{\frac{n-3}{2}}dt=0\right\}
	\end{align*}
	of all functions whose zonal extensions to $S^{n-1}$ are centered.
	
Finally, note that the results imply the following refinement of Theorem \ref{theorem:Schuster_Wannerer_Minkowski}. 

	\begin{maincorollary}
			If $\Phi:\mathcal{K}(\R^n)\rightarrow\mathcal{K}(\R^n)$ is a  continuous Minkowski valuation  which is translation invariant and $\SO(n)$-equivariant, then there exist uniquely determined $c_0,c_n\ge 0$, and $f_j\in D^{\frac{n-j-1}{2}}$, $1\le j\le n-2$, $f_{n-1}\in C([-1,1])$ such that for all $y\in S^{n-1}$
			\begin{align*}
				h_{\Phi(K)}(y)=&c_0+\sum_{j=1}^{n-2}\lim\limits_{\epsilon\rightarrow0}\int_{\{v\in S^{n-1}: |\langle y,v\rangle|\le 1-\epsilon\}} f_j(\langle y,v\rangle)dS_j(K,v)\\
				&+\int_{S^{n-1}} f_{n-1}(\langle y,v\rangle)dS_{n-1}(K,v)+c_nV_n(K)
			\end{align*}
			for every $K\in\mathcal{K}(\R^n)$. The functions $f_1,\dots,f_{n-1}$ are unique up to linear functions.
	\end{maincorollary}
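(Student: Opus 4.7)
The plan is to reduce this vector-valued statement to the scalar classification by fixing a direction and unwinding support functions. For each $y\in S^{n-1}$, the functional $\mu_y:K\mapsto h_{\Phi(K)}(y)$ lies in $\Val(\R^n)$, since Minkowski addition of convex bodies corresponds to pointwise addition of support functions, so the Minkowski valuation identity for $\Phi$ turns into the scalar valuation identity for $\mu_y$, and continuity together with translation invariance transfer immediately. Specialising to $y=e_n$, the subgroup $\SO(n-1)\subset\SO(n)$ stabilises $e_n$, so the $\SO(n)$-equivariance of $\Phi$ forces $\mu_{e_n}\in\Val(\R^n)^{\SO(n-1)}$.

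Next I decompose $\mu_{e_n}$ using McMullen's grading \eqref{eq:McMullenDecomp}. Since the projections onto the homogeneous components commute with the $\SO(n)$-action on $\Val(\R^n)$, every summand is again $\SO(n-1)$-invariant. The degree-$0$ part is a constant $c_0$, while Hadwiger's theorem identifies the degree-$n$ part as $c_nV_n$; the non-negativity of $c_0$ and $c_n$ follows as in Theorem~\ref{theorem:Schuster_Wannerer_Minkowski}, using that by equivariance $\Phi(\{0\})$ and $\Phi(RB^n)$ are origin-centred balls. For the degree-$(n-1)$ part, McMullen's representation by integration against $S_{n-1}(K,\cdot)$ combined with zonality produces $f_{n-1}\in C([-1,1])$, with residual ambiguity only a multiple of $t$, since the identity $\int_{S^{n-1}}v\,dS_{n-1}(K,v)=0$ kills the linear zonal term $v\mapsto v_n$. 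Finally, for each $1\le j\le n-2$, Main Theorem~\ref{maintheorem:Classification} furnishes $f_j\in D^{\frac{n-j-1}{2}}$, unique up to a linear function, such that the $j$-homogeneous component of $\mu_{e_n}$ equals $\Phi_j(f_j)$.

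To extend from the fixed direction $e_n$ to an arbitrary $y\in S^{n-1}$, choose $\vartheta\in\SO(n)$ with $\vartheta e_n=y$. Equivariance gives $h_{\Phi(K)}(y)=h_{\Phi(\vartheta^{-1}K)}(e_n)$, and the rotation covariance of the area measures sends each integrand $f_j(\langle e_n,v\rangle)$ to $f_j(\langle y,v\rangle)$, while the truncation set $\{v:|v_n|\le 1-\epsilon\}$ is mapped to $\{v:|\langle y,v\rangle|\le 1-\epsilon\}$. Hence the principal value transforms consistently, and assembling the four contributions produces the claimed representation; the uniqueness assertions for $f_1,\dots,f_{n-1}$ inherit directly from the scalar classifications. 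The main conceptual difficulty has already been absorbed by Main Theorems~\ref{maintheorem:SingularValuations} and~\ref{maintheorem:Classification}, so what remains is essentially bookkeeping, the only delicate point being the verification that rotating the argument of the support function commutes with the principal value limit defining $\Phi_j$, which is immediate from the $\SO(n-1)$-invariance of the truncation sets together with the continuity of $f_j$ away from the poles.
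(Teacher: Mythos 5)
Your argument is correct, and it is essentially the proof the paper leaves implicit when it states that Theorem~\ref{maintheorem:Classification} together with the cited structural facts ``imply'' the corollary. The reduction to the scalar valuation $\mu_{e_n}(K)=h_{\Phi(K)}(e_n)\in\Val(\R^n)^{\SO(n-1)}$, the application of McMullen's decomposition (with Hadwiger's theorem, McMullen's degree-$(n-1)$ representation, and Theorem~\ref{maintheorem:Classification} handling the respective pieces), and the extension to arbitrary $y$ by $\SO(n)$-equivariance together with the covariance $S_j(\vartheta^{-1}K,\cdot)=(\vartheta^{-1})_*S_j(K,\cdot)$ (which indeed carries the truncation set $\{|v_n|\le 1-\epsilon\}$ to $\{|\langle y,v\rangle|\le 1-\epsilon\}$) are exactly the intended steps; the only mild divergence is that you re-derive the passage to a scalar $\SO(n-1)$-invariant valuation rather than invoking Theorem~\ref{theorem:Schuster_Wannerer_Minkowski} for the decomposition, which makes the proof a bit more self-contained but changes nothing substantive. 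The sign argument for $c_0,c_n$ via $\Phi(\{0\})$ and $\Phi(RB_1(0))$ being origin-centred balls is also the standard one and closes the uniqueness and non-negativity claims.
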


	Let us remark that the classification of $\Val_j(\R^n)^{\SO(n-1)}$ presented above closely resembles the description of a certain space of $\SO(n)$-invariant valuations on convex functions obtained by Colesanti, Ludwig, and Mussnig in terms of functional versions of the intrinsic volumes \cite{ColesantiEtAlHadwigertheoremconvex2020}. This resemblance is not accidental, since these two settings are closely related, compare \cite{KnoerrSmoothvaluationsconvex2024,KnoerrUlivellivaluationsconvexbodies2024}. In particular, our approach is heavily based on ideas by Colesanti, Ludwig, and Mussnig from \cite{ColesantiEtAlHadwigertheoremconvex,ColesantiEtAlHadwigertheoremconvex2020,ColesantiEtAlHadwigertheoremconvex2022,ColesantiEtAlHadwigertheoremconvex2023}, including the construction of the relevant valuations in Theorem \ref{maintheorem:SingularValuations} (which relies on an inequality obtained in \cite{KnoerrSingularvaluationsHadwiger2022} using the geometric ideas from \cite{ColesantiEtAlHadwigertheoremconvex2020}). This also applies to the use of the cones in the reconstruction procedure, which are the geometric analog of certain functions used in \cite{ColesantiEtAlHadwigertheoremconvex,ColesantiEtAlHadwigertheoremconvex2020,ColesantiEtAlHadwigertheoremconvex2022,ColesantiEtAlHadwigertheoremconvex2023}.  In Section \ref{section:FunctionalHadwiger} we show that the results of this article can be used to recover the classification result by Colesanti, Ludwig, and Mussnig, which is based on an alternative approach to their result discussed in \cite[Section 8.3]{ColesantiEtAlHadwigertheoremconvex2022}, and which highlights the connection between the two settings.

	\paragraph*{Plan of the article}
	Section \ref{section:Preliminaries} collects some background on convex bodies and valuations on convex bodies, in particular on the area measures of cones. We also establish some necessary results about the spaces $D^{\frac{n-j-1}{2}}$ and introduce the relevant integral transforms needed in the proofs of the characterization result. Section \ref{section:SingularIntegrals} establishes bounds on the integral of zonal functions, which we use to obtain a new proof of a theorem by Firey on the volume of spherical caps \cite{FireyLocalbehaviourarea1970}. This estimate is then used to prove Theorem \ref{maintheorem:SingularValuations}. We also describe the subset of function that lead to a proper integral representation for all convex bodies in Proposition \ref{proposition:GeneralIntegrability} and show that this includes Berg's functions. Section \ref{section:Classification} contains the two proofs of the classification result in Theorem \ref{maintheorem:Classification}. In Section \ref{section:FunctionalHadwiger} we apply these results to rotation invariant valuations on convex functions.

	\paragraph*{Acknowledgments}
	This project originates in discussions with Thomas Wannerer during a visit at the Friedrich-Schiller-Universität Jena. The second proof using the Hard Lefschetz Theorem for spherical valuations is based on his ideas, and the author thanks him for his comments and the productive discussions during the preparation of this article. \\
	The author was partially supported by DFG-grant WA 3510/3-1.

\section{Preliminaries}
	\label{section:Preliminaries}
	We denote by $\mathcal{H}^{j}$ the $j$-dimensional Hausdorff measure of a Borel subset in $\R^n$. For $j=n$, we also use $\vol_n=\mathcal{H}^n$ for the $n$-dimension Lebesgue measure. $\omega_n$ denotes the volume of the  $n$-dimension unit ball $B_1(0)$ in $\R^n$. In particular, $\mathcal{H}^{n-1}(S^{n-1})=n\omega_n$. For sets $A_i\subset\R^n$, we denote by $\mathrm{conv}(A_i)_{i\in I}$ the convex hull of their union.
	\subsection{Convex bodies and area measures of cones}
	\label{section:Prelim_valuationsBodies}
	We refer to the monograph by Schneider \cite{SchneiderConvexbodiesBrunn2014} for a comprehensive background on convex bodies and area measures and only collect the results needed in this article. First, a convex body $K\in\mathcal{K}(\R^n)$ is uniquely determined by its support function $h_K:\R^n\rightarrow\R$ defined by
	\begin{align*}
		h_K(y)=\sup_{x\in K}\langle y,x\rangle,\quad y\in \R^n.
	\end{align*}
	The support function is a convex function, so it is in particular continuous. 	Since it is also $1$-homogeneous, it is advantageous to identify it with its restriction to the unit sphere in $\R^n$, which we denote by the same symbol for brevity. In particular, we denote by $\|h_K\|_\infty$ its supremum norm as a function in $C(S^{n-1})$. Then the Hausdorff metric $d_H$ admits the following description in terms of the support function:
	\begin{align*}
		d_H(K,L)=\|h_K-h_L\|_\infty \quad\text{for}~K,L\in\mathcal{K}(\R^n).
	\end{align*}
		The support function has the following well known properties:
	\begin{enumerate}
		\item $h_{gK}(y)=h_K(g^Ty)$ for $g\in\GL(n,\R)$, $y\in \R^n$, $K\in\mathcal{K}(\R^n)$;
		\item $h_{K+L}=h_K+h_L$ for $K,L\in\mathcal{K}(\R^n)$;
		\item $h_{tK}=th_K$ for $t\ge 0$, $K\in\mathcal{K}(\R^n)$;
		\item $\max(h_{K}(y),h_L(y))=h_{\mathrm{conv}(K,L)}(y)$ for $K,L\in\mathcal{K}(\R^n)$, $y\in\R^n$.
	\end{enumerate}
	Given a convex body $K$, the $j$th area measure $S_j(K)$ is a non-negative centered measure on $S^{n-1}$. They may be obtained in the following way, compare \cite[Section 4]{SchneiderConvexbodiesBrunn2014}: For $\rho>0$ consider the set $K_\rho:=\{x\in \R^n:d(x,K)\le \rho\}$ and consider the nearest point projection $p(K,\cdot):K_\rho\setminus K\rightarrow K$. For a Borel set $\omega\subset S^{n-1}$, we define the local parallel set
	\begin{align*}
		B_\rho(K,\omega):=\left\{x\in K_\rho\setminus K: \frac{x-p(K,x)}{|x-p(K,x)|}\in\omega\right\}.
	\end{align*}
	Then  the following local Steiner formula holds, compare \cite[Theorem 4.2.1]{SchneiderConvexbodiesBrunn2014}:
	\begin{align}
		\label{equation:SteinerAreaMeasures}
		\vol_n(B_\rho(K,\omega))=\frac{1}{n}\sum_{j=0}^{n-1}\rho^{n-j}\binom{n}{j}S_j(K,\omega).
	\end{align}
	Moreover, the area measures are locally determined in the following sense. If $K,L\in\mathcal{K}(\R^n)$ are two convex bodies such that $h_K=h_L$ on an open set $U\subset S^{n-1}$, then $S_j(K)|_U=S_j(L)|_U$.\\

	As an example, consider the $(n-1)$-dimensional disk $\D^{n-1}=\{(x,0)\in \R^n: |x|\le 1,~x\in\R^{n-1}\}$. Then for $0\le j\le n-2$
	\begin{align}
		\label{eq:areaMeasureDisk}
		S_j(\D^{n-1},U)=\frac{n-1-j}{n-1}\int_{U} (1-v_n^2)^{-\frac{j}{2}}d\mathcal{H}^{n-1}(v),
	\end{align}
	compare \cite[Example 4.6]{BraunerOrtegaMorenoFixedPointsMean2023}. We will be interested in the area measures of the cones $C_h$ given for $h\in \R$ by
	\begin{align*}
		C_h=\mathrm{conv}(\D^{n-1},\{he_n\}) \in \mathcal{K}(\R^n).
	\end{align*}

	\begin{lemma}
	\label{lemma:areaMeasuresCones}
	Let $h\geq 0$, $U\subset S^{n-1}$ a Borel set.
	\begin{enumerate}
		\item For $U\subset \{ v \in S^{n-1} :  (1+h^2)^{-\frac{1}{2}}<v_n\le 1\}$ and $1\leq j\leq n-1$: $S_{j}(C_h , U) =0$.
		\item For $ U\subset \{ v \in S^{n-1} :  -1\le v_n < (1+h^2)^{-\frac{1}{2} }\}$
		and $0\leq j\leq n-2$: 
		\begin{align*}
			S_{j}(C_h , U) =  \frac{(n-j-1)}{n-1}\int_{U}  (1-v_n^2 )^{-\frac{j}{2}}  d\mathcal{H}^{n-1}(v).
		\end{align*} 
		\item $ S_{j}(C_h , \{v\in S^{n-1}:v_n=(1+h^2)^{-\frac{1}{2}}\})=\omega_{n-1} \sqrt{1+h^2}\left(\frac{h}{\sqrt{1+h^2}}\right)^{n-j-1}$.
	\end{enumerate}
\end{lemma}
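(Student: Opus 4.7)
I derive all three claims from the local Steiner formula \eqref{equation:SteinerAreaMeasures}: compute $\vol_n(B_\rho(C_h, U))$ in each of the three cases and read off $S_j(C_h, U)$ by matching powers of $\rho$. The geometry of $C_h$ is piecewise smooth with four face types---the apex $he_n$, the interior of the lateral cone, the boundary circle $\partial \D^{n-1}$, and the interior of the disk---and for each type the set of outer unit normals is explicit: the apex supplies the closed upper cap $\{v_n\ge (1+h^2)^{-1/2}\}$, the lateral interior supplies exactly the equator $\{v_n=(1+h^2)^{-1/2}\}$, the edge circle's normal arcs sweep out the closed lower cap, and the disk interior contributes only $\{-e_n\}$. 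Consequently, each of the three regions $U$ appearing in the statement receives contributions from only one face type up to an $\mathcal{H}^n$-null subset, so I treat them independently.

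For part (1) only the apex contributes, so I parametrize the relevant tube by $(v,\rho')\in U\times(0,\rho]$ via $(v,\rho')\mapsto he_n+\rho' v$, with Jacobian $\rho'^{n-1}$. This yields $\vol_n(B_\rho(C_h,U))=\rho^n\mathcal{H}^{n-1}(U)/n$, a pure $\rho^n$ contribution. It must match the $j=0$ slot of \eqref{equation:SteinerAreaMeasures}, forcing $S_j(C_h, U)=0$ for $1\le j\le n-1$.

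For part (2) only the edge circle contributes. I parametrize its tube by $(u,\phi,\rho')\in S^{n-2}\times(\arctan h,\pi)\times(0,\rho]$ through $(u,\phi,\rho')\mapsto((1+\rho'\sin\phi)u,\rho'\cos\phi)$, whose outer unit normal is $v=(\sin\phi\cdot u,\cos\phi)$ and whose three families of partial derivatives are pairwise orthogonal, giving the Jacobian $\rho'(1+\rho'\sin\phi)^{n-2}$. Expanding $(1+\rho'\sin\phi)^{n-2}$ by the binomial theorem, integrating out $\rho'$, and converting to spherical coordinates on $S^{n-1}$ via $d\mathcal{H}^{n-1}(v)=\sin^{n-2}\phi\,d\phi\,d\mathcal{H}^{n-2}(u)$ with $\sin^2\phi=1-v_n^2$ expresses the tube volume as a polynomial in $\rho$; matching the $\rho^{n-j}$-coefficient against \eqref{equation:SteinerAreaMeasures} produces the stated density $\frac{n-j-1}{n-1}(1-v_n^2)^{-j/2}$ for $0\le j\le n-2$.

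For part (3) the equator is $\mathcal{H}^{n-1}$-null, so the apex's rays in equator directions and the analogous edge rays trace out subsets of $K_\rho\setminus K$ of dimension strictly less than $n$, hence with zero $n$-volume; only the lateral interior matters. I parametrize its tube by $(u,t,\rho')\in S^{n-2}\times(0,1)\times(0,\rho]$ via $(u,t,\rho')\mapsto((t+\rho' h/\sqrt{1+h^2})u,\,(1-t)h+\rho'/\sqrt{1+h^2})$; on each ray the outer unit normal is the constant $(hu,1)/\sqrt{1+h^2}$, lying on the equator, and the mutual orthogonality of the three partial-derivative directions gives the Jacobian $\sqrt{1+h^2}\,(t+\rho' h/\sqrt{1+h^2})^{n-2}$. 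A binomial expansion in $(t+\rho' h/\sqrt{1+h^2})^{n-2}$ followed by the $t$- and $\rho'$-integrations produces a polynomial in $\rho$ whose $\rho^{n-j}$-coefficient, once matched with \eqref{equation:SteinerAreaMeasures}, recovers the stated mass $\omega_{n-1}\sqrt{1+h^2}(h/\sqrt{1+h^2})^{n-j-1}$. The main technical point common to all three parts is verifying that each parametrization is injective on its open domain and that the face-type decomposition indeed partitions $B_\rho(C_h, U)$ up to an $\mathcal{H}^n$-null set; both reduce to the uniqueness of the metric projection onto the convex body $C_h$ together with the explicit cone geometry described above.
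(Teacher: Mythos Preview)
Your approach is correct and, for parts 1 and 2, genuinely different from the paper's. The paper never computes the tubes over the apex or the edge circle; instead it observes that $h_{C_h}=\max(h_{\{he_n\}},h_{\D^{n-1}})$, so $h_{C_h}$ agrees with $h_{\{he_n\}}$ on the open upper cap and with $h_{\D^{n-1}}$ on the open lower cap, and then invokes the fact that area measures are locally determined by the support function. Part 1 follows because $S_j$ of a point vanishes for $j\ge 1$, and part 2 reduces to the already--stated formula \eqref{eq:areaMeasureDisk} for the disk. For part 3 the paper does exactly what you do: parametrize the tube over the lateral surface as a solid of revolution and read off the coefficients from the local Steiner formula. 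The trade-off is that the paper's route is shorter and more conceptual but imports the disk formula from \cite{BraunerOrtegaMorenoFixedPointsMean2023}, whereas your direct computation is self-contained and in effect re-derives \eqref{eq:areaMeasureDisk} along the way.

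One inaccuracy worth fixing: your blanket claim that ``each of the three regions $U$ receives contributions from only one face type up to an $\mathcal{H}^n$-null subset'' is false for part 2 when $-e_n\in U$. The interior of the base disk then contributes the full slab $\{(w,-s): |w|<1,\ 0<s\le\rho\}$, which has volume $\rho\,\omega_{n-1}$ and is certainly not $\mathcal{H}^n$-null. Fortunately this term is pure $\rho^{1}=\rho^{n-(n-1)}$, so it only affects the coefficient you would match to $S_{n-1}$; since part 2 is stated only for $0\le j\le n-2$, your edge-circle parametrization still yields the correct $\rho^{n-j}$-coefficients in that range. You should say this explicitly rather than claim the disk contribution is negligible.
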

\begin{proof}
	Notice that for $v\in S^{n-1}$,
	\begin{align*}
		h_{C_h}(v)=&h_{\mathrm{conv}(\{he_n\},\D^{n-1})}(v)=\max(h_{\{h e_n\}}(v),h_{\D^{n-1}}(v))\\
		=&\begin{cases}
			h_{\{he_n\}}(v), & (1+h^2)^{-\frac{1}{2}}\le v_n\le 1,\\
			h_{\D^{n-1}}(v), &  -1\le v_n\le (1+h^2)^{-\frac{1}{2}}.
		\end{cases}
	\end{align*}
	Since $S_j$ is locally determined and the sets $\{ v \in S^{n-1} :  (1+h^2)^{-\frac{1}{2}}<v_n\le 1\}$ and $\{ v \in S^{n-1} :  -1\le v_n<(1+h^2)^{-\frac{1}{2}}\}$ are open, this implies 
	\begin{align*}
		S_j(C_h,U)=\begin{cases}
			S_j(\{he_n\},U), & U\subset \{ v \in S^{n-1} :  (1+h^2)^{-\frac{1}{2}}<v_n\le 1\},\\
			S_j(\D^{n-1},U), & U\subset \{ v \in S^{n-1} :  -1\le v_n<(1+h^2)^{-\frac{1}{2}}\}.
		\end{cases}
	\end{align*}
	This shows 1. and 2. (compare \eqref{eq:areaMeasureDisk}).\\
	Let us turn to $3$. Set $\omega=\{v\in S^{n-1}:v_n=(1+h^2)^{-\frac{1}{2}}\}$. Note that
	\begin{align*}
		B_\rho(C_h,\omega)=\left\{\left((1-\frac{x_n}{h})v,x_n\right)+s(1+h^2)^{-\frac{1}{2}}\left(hv,1\right):x_n\in[0,h],~s\in (0,\rho],~ v\in S^{n-2}\right\}.
	\end{align*}
	This is a solid of revolution and an elementary calculation shows that its volume is given by
	\begin{align*}
		\vol_n(B_\rho(C_h,\omega))=\frac{1}{n}\frac{1+h^2}{h}\left[\left(1+\rho\frac{h}{\sqrt{1+h^2}}\right)^{n}-\rho^{n}\left(\frac{h}{\sqrt{1+h^2}}\right)^{n}-1\right].
	\end{align*}
	The claim follows from the Steiner formula for area measures \eqref{equation:SteinerAreaMeasures}.
\end{proof}

	\begin{lemma}
		\label{lemma:IntegrationCones}
		Writing $s= \sign(h)(1+h^2)^{-\frac 12 }$, we have for $f\in C([-1,1])$ and all $h\in\R$
		\begin{align*}
			\int_{S^{n-1}} &f(v_n) dS_j(C_{h},v) \\   =&   \sign(s)\omega_{n-1} \left(  {s}^{-1} (1-s^2)^{\frac{n-j-1}{2}}  f(s)  + (n-j-1) \int_{-\sign(s)1}^s f(t) (1-t^2)^{\frac{n-j-1}{2}-1} dt\right).
		\end{align*}
	\end{lemma}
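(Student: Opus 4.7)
The plan is to use the three-part decomposition of $S_j(C_h)$ furnished by Lemma~\ref{lemma:areaMeasuresCones} to evaluate the integral directly when $h>0$, then pass to $h<0$ by a reflection argument. Throughout I assume $h\neq 0$ (the case $h=0$ is excluded from the formula since $s^{-1}$ is singular there).

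First, assume $h>0$, so that $s=(1+h^2)^{-1/2}\in(0,1)$ and $\sign(s)=1$. I split $S^{n-1}$ into the open upper cap $U_+=\{v_n>s\}$, the equator $E=\{v_n=s\}$, and the open lower cap $U_-=\{v_n<s\}$, and compute the three corresponding contributions to $\int f(v_n)\,dS_j(C_h,v)$. By Lemma~\ref{lemma:areaMeasuresCones}(1) the upper cap contributes nothing. By Lemma~\ref{lemma:areaMeasuresCones}(3) the equator carries mass $\omega_{n-1}\sqrt{1+h^2}\,(h/\sqrt{1+h^2})^{n-j-1}$; using the elementary identities $\sqrt{1+h^2}=s^{-1}$ and $h/\sqrt{1+h^2}=\sqrt{1-s^2}$, and noting $f(v_n)\equiv f(s)$ on $E$, this piece contributes $\omega_{n-1}s^{-1}(1-s^2)^{(n-j-1)/2}f(s)$, which is exactly the first term in the claimed formula.

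For the lower cap, Lemma~\ref{lemma:areaMeasuresCones}(2) says $S_j(C_h)$ has density $\tfrac{n-j-1}{n-1}(1-v_n^2)^{-j/2}$ against $\mathcal{H}^{n-1}$. I would then apply the standard spherical slicing identity
\begin{align*}
\int_{S^{n-1}} g(v_n)\,d\mathcal{H}^{n-1}(v) \;=\; (n-1)\omega_{n-1}\int_{-1}^{1} g(t)(1-t^2)^{(n-3)/2}\,dt,
\end{align*}
restricted to $\{v_n<s\}$, with $g(t)=f(t)\tfrac{n-j-1}{n-1}(1-t^2)^{-j/2}$. Combining the exponents $-j/2+(n-3)/2=(n-j-1)/2-1$ yields the integral $(n-j-1)\omega_{n-1}\int_{-1}^{s}f(t)(1-t^2)^{(n-j-1)/2-1}\,dt$, which matches the second term (with $-\sign(s)=-1$ as lower limit). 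This completes the case $s>0$.

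For $h<0$, I would reduce to the previous case via the reflection $R(x_1,\dots,x_n)=(x_1,\dots,x_{n-1},-x_n)$. Since $R\D^{n-1}=\D^{n-1}$ and $R\{he_n\}=\{-he_n\}$, one has $RC_h=C_{-h}$; the orthogonal invariance of area measures gives $\int f(v_n)\,dS_j(C_h,v)=\int f(-v_n)\,dS_j(C_{-h},v)$. Applying the formula already established to $C_{-h}$ (with parameter $s'=-s>0$) and the function $t\mapsto f(-t)$, then performing the substitution $u=-t$ in the resulting integral, produces precisely the asserted formula with the outer factor $\sign(s)=-1$ and lower limit $-\sign(s)\cdot 1=1$. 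The only real care needed is in tracking the signs and the reversal of the integration interval; no analytic difficulty arises beyond that.
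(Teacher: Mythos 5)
Your proof is correct and follows essentially the same route as the paper: use Lemma~\ref{lemma:areaMeasuresCones} to split the integral into the equator and lower-cap contributions (the upper cap being trivial), pass to the single variable $t=v_n$ via the spherical slicing formula (the paper calls this ``spherical cylinder coordinates''), and reduce the case $h<0$ to $h>0$ by the symmetry $C_{-h}=RC_h$ — which the paper simply states as ``the expressions are symmetric under a sign change for $h$.'' Your explicit tracking of signs in the reflection step and your remark about $h=0$ are slightly more careful than the paper's terse treatment, but the argument is the same.
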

	\begin{proof}
		Since the expressions are symmetric under a sign change for $h$, it is sufficient to establish the claim for $h\ge0$ (or equivalently $s>0$).
		From Lemma \ref{lemma:areaMeasuresCones}, we obtain for $f\in C([-1,1])$ and $h>0$,
		\begin{align*}
			&\int_{S^{n-1}}f(v_n)dS_j(C_h,v)\\
			=&\int_{\left\{v_n=\frac{1}{\sqrt{1+h^2}}\right\}}f(v_n)dS_j(C_h,v)+\frac{(n - j -1)}{n-1}\int_{\left\{-1\le v_n\le \frac{1}{\sqrt{1+h^2}}\right\}}f(v_n)(1-v_n^2)^{-j}d\mathcal{H}^{n-1}(v)\\
			=&\omega_{n-1} \sqrt{1+h^2}\left(\frac{h}{\sqrt{1+h^2}}\right)^{n-j-1}f\left(\frac{h}{\sqrt{1+h^2}}\right)+(n-j-1)\omega_{n-1}\int_{-1}^{\frac{1}{\sqrt{1+h^2}}}f(t)(1-t^2)^{\frac{n-3-j}{2}}dt,
		\end{align*}
		where we switched to spherical cylinder coordinates in the last step. The claim follows.
	\end{proof}

\subsection{The spaces $D^{a}$}
\label{section:Da}
\begin{definition}
	\label{def:Dj}
	For $a>0$ let $D^{a}$ be the space of all continuous functions $ f\colon (-1,1)\to \R $
	satisfying the properties 
	\begin{enumerate}
		\item $\lim_{s\to \pm 1} (1-s^2)^{a} f(s) =0$,
		\item the limits $\lim_{s\rightarrow\pm 1} \int_0^s (1-t^2)^{a-1} f(t) dt $ exist and are finite.
	\end{enumerate}
\end{definition}
We equip $D^{a}$ with the norm
\begin{align*}
	\|f\|_{D^{a}}:=\sup_{s\in (-1,1)}(1-s^2)^{a}|f(s)|+\sup_{s\in (-1,1)}\left|\int_{0}^s(1-t^2)^{a-1}f(t)dt\right|.
\end{align*}
\begin{lemma}
	\label{lemma:DaBanach}
	$D^{a}$ is complete with respect to $\|\cdot\|_{D^{a}}$
\end{lemma}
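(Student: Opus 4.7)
The plan is to take a Cauchy sequence $(f_n) \subset D^a$, extract the natural candidate for a limit by looking at the two terms in the norm separately, and then verify both membership in $D^a$ and norm convergence. The key observation is that the two seminorms contributing to $\|\cdot\|_{D^a}$ each produce Cauchy sequences in the classical Banach space $C([-1,1])$: writing $g_n(s) := (1-s^2)^a f_n(s)$ and $F_n(s) := \int_0^s (1-t^2)^{a-1} f_n(t)\, dt$, property~1 of Definition~\ref{def:Dj} lets $g_n$ extend continuously to $[-1,1]$ with $g_n(\pm 1) = 0$, and property~2 lets $F_n$ extend continuously to $[-1,1]$. The suprema in $\|f_n - f_m\|_{D^a}$ over $(-1,1)$ coincide with the sup-norms of these extensions over $[-1,1]$ by continuity, so $(g_n)$ and $(F_n)$ are Cauchy in $C([-1,1])$ and converge uniformly to continuous limits $g, F \in C([-1,1])$, with $g(\pm 1) = 0$.

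Next I would define the candidate limit $f(s) := g(s)/(1-s^2)^a$ on $(-1,1)$. This is continuous on $(-1,1)$, and property~1 of $D^a$ for $f$ is immediate from $g(\pm 1) = 0$ and the identity $(1-s^2)^a f(s) = g(s)$. For property~2, I need to identify $F$ as the integral transform of $f$. Here is where a small technical step is needed: for any compact interval $[-r,r] \subset (-1,1)$ the weight $(1-t^2)^a$ is bounded below by $(1-r^2)^a > 0$, so the uniform convergence $g_n \to g$ on $[-1,1]$ upgrades to uniform convergence $f_n \to f$ on $[-r,r]$. Choosing $r > |s|$ and passing to the limit in $\int_0^s (1-t^2)^{a-1} f_n(t)\, dt$ yields $F(s) = \int_0^s (1-t^2)^{a-1} f(t)\, dt$ for every $s \in (-1,1)$. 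Since $F$ extends continuously to $[-1,1]$, the limits at $\pm 1$ exist and are finite, so $f \in D^a$.

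Finally, norm convergence follows at once: the first term of $\|f_n - f\|_{D^a}$ equals $\|g_n - g\|_{C([-1,1])} \to 0$, and the second term equals $\|F_n - F\|_{C([-1,1])} \to 0$. The only nontrivial step is the identification $F(s) = \int_0^s (1-t^2)^{a-1} f(t)\, dt$, and this is exactly where the two seminorms must talk to each other; everything else reduces to the completeness of $C([-1,1])$. I do not expect any serious obstacle.
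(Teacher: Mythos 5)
Your proposal is correct and follows essentially the same route as the paper: reduce to the completeness of $C([-1,1])$ by treating the two seminorms in $\|\cdot\|_{D^a}$ as producing Cauchy sequences of continuous functions on $[-1,1]$, then use local uniform convergence on compact subsets of $(-1,1)$ to identify the second limit with the integral transform of the candidate $f$. The paper's only cosmetic difference is that it phrases the final norm convergence via a liminf estimate rather than your more direct identification of the two terms with $\|g_n-g\|_\infty$ and $\|F_n-F\|_\infty$.
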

\begin{proof}
	Let $(f_m)_m$ be a Cauchy sequence in $D^{a}$. Then $\phi_m(s):=(1-s^2)^{a}f_m(s)$ extends by assumption to a continuous function on $[-1,1]$ with $\phi_m(\pm1)=0$. Moreover, $(\phi_m)_m$ is a Cauchy sequence in $C([-1,1])$, as
	\begin{align*}
		\|\phi_m-\phi_k\|_\infty\le \|f_m-f_k\|_{D^{a}},
	\end{align*}
	and thus converges uniformly on $[-1,1]$ to some $\phi\in C([-1,1])$. In particular, $\phi(\pm1)=0$. Set $f(s):=(1-s^2)^{-a}\phi(s)$ for $s\in (-1,1)$. Then $f\in C((-1,1))$ and
	\begin{align*}
		\lim\limits_{s\rightarrow\pm 1}(1-s^2)^{a}f(s)=\lim\limits_{s\rightarrow\pm1}\phi(s)=0.
	\end{align*}
	Moreover,
	\begin{align*}
		\sup_{s\in(-1,1)} |(1-s^2)^{a}f(s)-(1-s^2)^{a}f_m(s)|=\|\phi-\phi_m\|_\infty
	\end{align*}
	converges to $0$, so $(f_m)_m$ converges locally uniformly to $f$ on $(-1,1)$. In particular, for any $s\in(-1,1)$:
	\begin{align*}
		&\left|\int_0^s(1-t^2)^{a}f(t)dt-\int_0^s(1-t^2)^{a}f_m(t)dt\right|\\
		=&\lim\limits_{k\rightarrow \infty}	\left|\int_0^s(1-t^2)^{a}f_k(t)dt-\int_0^s(1-t^2)^{a}f_m(t)dt\right|\\
		\le& \liminf_{k\rightarrow\infty}\|f_k-f_m\|_{D^{a}}.
	\end{align*}
	As $(f_m)_m$ is a Cauchy sequence, this implies that the functions $\psi_m(s):=\int_0^s(1-t^2)^{a}f_m(t)dt$, which extend to elements of $C([-1,1])$ by assumption, converge uniformly on $[-1,1]$ to a function $\psi\in C([-1,1])$, which satisfies
	\begin{align*}
		\psi(s)=\int_0^s(1-t^2)^{a}f(t)dt \quad\text{for}~s\in (-1,1).
	\end{align*}
	In particular, the limits
	\begin{align*}
		\lim\limits_{s\rightarrow\pm 1}\int_0^s(1-t^2)^{a}f(t)dt=\psi(\pm 1)
	\end{align*}
	exist and are finite. Thus $f\in D^{a}$. It easily follows from the definition of $\|\cdot\|_{D^{a}}$ that
	\begin{align*}
		\|f-f_m\|_{D^{a}}\le\liminf_{k\rightarrow\infty}\|f_k-f_m\|_{D^{a}},
	\end{align*}
	so $(f_m)_m$ converges to $f$ with respect to this norm as $(f_m)_m$ is a Cauchy sequence.
\end{proof}
\begin{lemma}
	\label{lemma:ContFctDenseDj}
	For $f\in D^{a}$ and $r\in (0,1)$ define $f^r\in C([-1,1])$ by
	\begin{align*}
		f^r(s)=\begin{cases}
			f(r), & s\ge r,\\
			f(s), & -r<s<r,\\
			f(-r), & s\le r.
		\end{cases}
	\end{align*} 
	Then $f^r$ converges to $f$ in $D^{a}$ for $r\rightarrow1$.
\end{lemma}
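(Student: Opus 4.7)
The plan is to estimate the two terms of $\|f - f^r\|_{D^a}$ separately, using that $f - f^r$ vanishes identically on $(-r,r)$ and equals $f(s) - f(\sign(s)\, r)$ on the two tails $\{|s|\geq r\}$. By the symmetry of the definition of $D^a$ under $t \mapsto -t$ (under which $t \mapsto f(-t)$ again lies in $D^a$), it suffices to analyze the tail $s \geq r$; the case $s \leq -r$ reduces to this by substitution.

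For the supremum term of the norm, I would use that $s \geq r > 0$ implies $(1-s^2) \leq (1-r^2)$, and estimate
\[
(1-s^2)^a|f(s) - f(r)| \leq (1-s^2)^a|f(s)| + (1-r^2)^a|f(r)|.
\]
Condition (1) in the definition of $D^a$ immediately implies that both $\sup_{t \in [r,1)}(1-t^2)^a|f(t)|$ and $(1-r^2)^a|f(r)|$ tend to $0$ as $r \to 1$.

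For the integral term with $s > r$, I would decompose
\[
\int_0^s (1-t^2)^{a-1}\bigl(f(t) - f^r(t)\bigr)\,dt = \bigl(F(s) - F(r)\bigr) - f(r)\bigl(G(s) - G(r)\bigr),
\]
where $F(s) := \int_0^s(1-t^2)^{a-1}f(t)\,dt$ and $G(s) := \int_0^s(1-t^2)^{a-1}\,dt$. By condition (2), $F$ extends continuously to $[-1,1]$, so $\sup_{s \in [r,1)}|F(s) - F(r)| \to 0$ as $r \to 1$ by uniform continuity. For the remaining term, the elementary estimate $\int_r^1(1-t^2)^{a-1}\,dt \leq C_a (1-r)^a$ (using $(1+t)^{a-1} \leq \max(2^{a-1},1)$ on $[0,1]$), combined with the bound $|f(r)| \leq \epsilon(r)(1-r^2)^{-a}$ from condition (1) with $\epsilon(r) := (1-r^2)^a|f(r)| \to 0$, yields
\[
|f(r)|\bigl(G(s) - G(r)\bigr) \leq |f(r)|\int_r^1 (1-t^2)^{a-1}\,dt \leq \frac{\epsilon(r)\, C_a}{(1+r)^a} \longrightarrow 0,
\]
uniformly in $s \geq r$.

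The main obstacle is precisely this last estimate: neither of the two defining conditions of $D^a$ alone suffices to control $|f(r)|(G(s) - G(r))$, since $|f(r)|$ may diverge (at rate up to $(1-r)^{-a}$) while $G(s) - G(r)$ only shrinks like $(1-r)^a$. It is the joint use of the pointwise bound from condition (1) together with the integrability from condition (2), mediated by the cancellation $(1-r)^a/(1-r^2)^a = (1+r)^{-a}$, that makes the product vanish.
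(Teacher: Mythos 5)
Your proposal is correct and follows essentially the same route as the paper: split $\|f-f^r\|_{D^a}$ into the supremum and integral parts, use condition (1) for the supremum part, and decompose the integral part into the difference of the primitive $F(s)-F(r)$ (handled by condition (2) and uniform continuity) plus $f(r)\int_r^s(1-t^2)^{a-1}dt$, with the last product controlled by the cancellation between the $(1-r)^a$-decay of the tail integral and the $(1-r^2)^{-a}$-growth allowed for $|f(r)|$. The only cosmetic difference is that the paper inserts a factor $t/r\ge 1$ to obtain the exact antiderivative $\frac{1}{2a}[(1-r^2)^a-(1-s^2)^a]$, whereas you bound $(1+t)^{a-1}$ by a constant to get $\int_r^1(1-t^2)^{a-1}dt\le C_a(1-r)^a$; both yield the same bound up to constants.
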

\begin{proof}
	For $f\in D^{a}$ consider the function $\psi(s)=\int_0^s(1-t^2)^{a-1}f(t)dt$. Then $\psi$ extends to a continuous function on $[-1,1]$. As $f$ and $f^r$ coincide on $[-r,r]$, we have
	\begin{align*}
		\|f-f^r\|_{D^{a}}=&\sup_{|s|\ge r}(1-s^2)^{a}|f(s)-f(r)|\\
		&+\sup_{s\in [r,1)}\left|\int_r^s(1-t^2)^{a}f(t)dt-\int_r^s(1-t^2)^{a}f(r)dt\right|\\
		&+\sup_{s\in (-1,-r])}\left|\int_s^{-r}(1-t^2)^{a}f(t)dt-\int_s^{-r}(1-t^2)^{a}f(-r)dt\right|\\
		\le&2\sup_{|s|\ge r}(1-s^2)^{a}|f(s)|+\sup_{s\in [r,1]}|\psi(s)-\psi(r)|+\sup_{s\in[r,1)}\frac{|f(r)|}{r}\int_r^s(1-s^2)^{a-1}sds\\
		&+\sup_{s\in [-1,-r]}|\psi(s)-\psi(-r)|+\sup_{s\in(-1,-r]}\frac{|f(-r)|}{r}\int_s^{-r}(1-s^2)^{a-1}(-s)ds.
	\end{align*}
	As $\psi$ is continuous in $\pm 1$ and $f\in D^{a}$, we obtain
	\begin{align*}
		&\limsup_{r\rightarrow1}\|f-f^r\|_{D^{a}}\\
		\le&\limsup_{r\rightarrow1}\frac{|f(r)|}{r}\int_r^1(1-s^2)^{a-1}sds+\frac{|f(-r)|}{r}\int_{-1}^{-r}(1-s^2)^{a-1}(-s)ds\\
		=&\frac{1}{a}\limsup_{r\rightarrow1}\frac{(|f(r)|+|f(-r)|)(1-r^2)^{a}}{r}=0.
	\end{align*}
	Thus $f^r$ converges to $f$ in $D^{a}$ for $r\rightarrow1$.
\end{proof}

\subsection{An integral transform}
The integral transforms in this section are motivated by Lemma \ref{lemma:IntegrationCones}. Consider for $f\in D^a$ the function $I_a(f):[-1,1]\setminus\{0\}\rightarrow\R$ defined by
\begin{align*}
	I_a(f)[s]=&\sign(s)\left[s^{-1} (1-s^2)^a f(s) + 2a \int_{-\sign(s)}^s f(t) (1-t^2)^{a-1} dt\right],\quad s\ne 0.
\end{align*}
Note that we interpret the integral as an improper Riemann integral, which is well defined since $f\in D^{a}$.

\begin{definition}
	Let $\mathcal{C}$ denote the space of all continuous functions $u:[-1,1]\setminus\{0\}\rightarrow\R$ such that
	\begin{enumerate}
		\item $u(1)=u(-1)$,
		\item the limit $\lim\limits_{s\rightarrow0}|s|u(s)$ exists and is finite.
	\end{enumerate}
\end{definition}

We equip $\mathcal{C}$ with the norm
\begin{align*}
	\|u\|_{\mathcal{C}}:=\sup_{s\in [-1,1]\setminus\{0\}}|s|\cdot|u(s)|.
\end{align*}

\begin{corollary}
	\label{corollary:IaProperties}
	$I_a:D^a\rightarrow\mathcal{C}$ is well defined and continuous. Moreover, $I_a(f)$ has the following properties:
	\begin{enumerate}
		\item $I_a(f)[1]=I_a(f)[-1]=\int_{-1}^1f(t)(1-t^2)^{a-1}dt$.
		\item If $f\in D^a$ is even or odd, then $I_a(f)$ is even or odd, respectively.
	\end{enumerate}
\end{corollary}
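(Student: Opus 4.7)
The plan is to verify the three claims by direct manipulation of the defining formula for $I_a(f)$, using the two conditions defining $D^{a}$ for well-definedness and boundary behavior, and a single change of variables for parity.

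\textbf{Well-definedness and the values at $\pm 1$.} For $s\in(-1,1)\setminus\{0\}$, the first summand $s^{-1}(1-s^2)^a f(s)$ is clearly continuous in $s$, and the improper integral $\int_{-\sign(s)}^{s} f(t)(1-t^2)^{a-1}\,dt$ converges by condition (2) in the definition of $D^{a}$, so $I_a(f)[s]$ is a finite real number depending continuously on $s$. As $s\to\pm 1$ the first summand vanishes by condition (1), while the integral extends continuously to the endpoint by condition (2), so $I_a(f)$ extends continuously to all of $[-1,1]\setminus\{0\}$. Plugging $s=1$ and $s=-1$ into the formula gives the same value, since the factor $\sign(-1)=-1$ at $s=-1$ is exactly cancelled by the reversed integration limits; this simultaneously proves assertion (1) and the equality $I_a(f)[1]=I_a(f)[-1]$ required for membership in $\mathcal{C}$.

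\textbf{Limit at $0$ and continuity of $I_a$.} Set $\psi(s):=\int_0^{s}(1-t^2)^{a-1}f(t)\,dt$, so that $\sup_{s\in(-1,1)}|\psi(s)|\leq\|f\|_{D^{a}}$ by definition of the norm. Using $\sign(s)\cdot|s|=s$, a direct rearrangement yields
\begin{align*}
|s|\,I_a(f)[s]=(1-s^2)^a f(s)+2a\sign(s)\,|s|\,\bigl(\psi(s)-\psi(-\sign(s))\bigr).
\end{align*}
The first summand tends to $f(0)$ as $s\to 0$ by continuity of $f$, and the second is bounded by $4a|s|\,\|f\|_{D^{a}}$ and hence vanishes in the limit. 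Therefore $\lim_{s\to 0}|s|\,I_a(f)[s]=f(0)$ exists and is finite, so $I_a(f)\in\mathcal{C}$. The same displayed identity, together with $(1-s^2)^a|f(s)|\leq\|f\|_{D^{a}}$ and $|s|\leq 1$, gives the global bound $\|I_a(f)\|_{\mathcal{C}}\leq(1+4a)\|f\|_{D^{a}}$; combined with the obvious linearity of $I_a$, this proves continuity.

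\textbf{Parity.} For assertion (2), I would substitute $t\mapsto -u$ in the integral inside $I_a(f)[-s]$: this sends the limits $(-\sign(-s),-s)=(\sign(s),-s)$ to $(-\sign(s),s)$ once the orientation is restored via the minus sign from $dt=-du$, and replaces $f(t)$ by $f(-u)=\pm f(u)$ according to whether $f$ is even or odd. Combined with the outer factor $\sign(-s)=-\sign(s)$ and with $(-s)^{-1}=-s^{-1}$, the accumulated signs either cancel entirely (for even $f$, giving $I_a(f)[-s]=I_a(f)[s]$) or produce an overall $-1$ (for odd $f$, giving $I_a(f)[-s]=-I_a(f)[s]$). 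The only subtle aspect of the whole proof is careful sign bookkeeping at $s=-1$ and in this parity substitution; every other step is a direct estimate using the two components of $\|\cdot\|_{D^{a}}$.
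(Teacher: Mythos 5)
Your proof is correct and follows essentially the same route as the paper: the paper's proof is extremely terse (it asserts membership in $\mathcal{C}$ as ``easy to see,'' writes down the key rearrangement $|s|\,I_a(f)[s]=(1-s^2)^a f(s)+2as\int_{-\sign(s)}^s f(t)(1-t^2)^{a-1}\,dt$, deduces the bound $\|I_a(f)\|_{\mathcal{C}}\le(1+4a)\|f\|_{D^a}$, and says the remaining properties ``follow directly from the definition''), and you fill in exactly those verifications using the same identity and the same $(1+4a)$ constant, so there is no methodological divergence. One small point worth flagging, however: if you carry out your ``plug in $s=1$'' computation explicitly, the vanishing of the first summand by condition (1) of $D^a$ leaves $I_a(f)[1]=2a\int_{-1}^1 f(t)(1-t^2)^{a-1}\,dt$, not $\int_{-1}^1 f(t)(1-t^2)^{a-1}\,dt$ as assertion (1) of the corollary reads -- the factor $2a$ has evidently been dropped in the statement (consistency with Lemma~\ref{lemma:IntegrationCones}, where $2a=n-j-1$ appears, confirms that $2a$ belongs there). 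Your write-up implicitly asserts that plugging in reproduces the stated formula, which it does not quite; you should either record the $2a$ explicitly or note the discrepancy, rather than claiming the computation ``simultaneously proves assertion (1)'' verbatim.
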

\begin{proof}
	It is easy to see that $I_a(f)\in\mathcal{C}$. Note that
	\begin{align*}
		|s| I_a(f)[s]=
			(1-s^2)^a f(s) + 2a s\int_{-\sign(s)}^s f(t) (1-t^2)^{a-1} dt,\quad s\ne0.
	\end{align*}
	In particular, $\|I_a(f)\|_{\mathcal{C}}\le (1+4a)\|f\|_{D^a}$. Thus $I_a$ is continuous. The rest of the properties follow directly from the definition
\end{proof}

Note that the definition of $I_a(f)$ can equivalently be stated as 
\begin{align}
	\label{eq:FormulaIaDerivative}
	I_af(s)=&|s|^{-1}(1-s^2)^{a+1}\frac{d}{ds}\left[(1-s^2)^{-a}\int_{-\sign(s)}^s f(t) (1-t^2)^{a-1} dt\right],
\end{align}
so if we set $u=I_a(f)$, then for $s\ne 0$
\begin{align*}
	&\sign(s)\int_{0}^s t(1-t^2)^{-(a+1)}u(t)dt\\
	=&(1-s^2)^{-a}\int_{-\sign(s)}^s f(t) (1-t^2)^{a-1} dt-\int_{-\sign(s)}^0 f(t) (1-t^2)^{a-1} dt.
	\end{align*}
We may multiply this equation with $(1-s^2)^a$, take the derivative, and rearrange, to obtain for $s\ne 0$
\begin{align*}
	f(s)+2as\int_{-\sign(s)}^0f(t)(1-t^2)^{a-1}dt=&-2a|s|\int_{0}^s t(1-t^2)^{-(a+1)}u(t)dt+|s|(1-s^2)^{-a}u(s).
\end{align*}
If $f$ is an odd function, then the left hand side differs from $f$ by a linear function, since
\begin{align*}
	\int_{-\sign(s)}^0f(t)(1-t^2)^{a-1}dt
\end{align*}
is independent of $\sign(s)$ in this case. If $f$ is even, then the left hand side differs from $f$ by the function
\begin{align*}
	2a|s|\cdot \frac{1}{2}\int_{-1}^1f(t)(1-t^2)^{a-1}dt=\frac{|s|}{2}I_a(f)[1]=\frac{|s|}{2}u(1).
\end{align*}

\begin{definition}
	\label{definition:AbelTypeTransform}
	For $u\in \mathcal{C}$, we define $J_a(u):(-1,1)\rightarrow\R$ by
	\begin{align*}
		J_a(u)[s]=\begin{cases}
			|s|\frac{-u(1)}{2}+|s|(1-s^2)^{-a}u(s)-2a |s|\int_{0}^st(1-t^2)^{-(a+1)}u(t)dt, & s\ne0,\\
			\lim_{t\rightarrow0}|t|u(t),& s=0.
		\end{cases}
	\end{align*}
\end{definition}
The following follows directly from the previous discussion.
\begin{lemma}
	\label{lemma:CompositionJaIa}
	If $f\in D^a$ is even, then $J_a\circ I_a(f)=f$. If $f\in D^a$ is odd, then
	\begin{align*}
		J_a\circ I_a(f)[s]=f(s)-s\cdot 2a\int_{0}^1f(t)(1-t^2)^{a-1}dt,\quad s\in (-1,1).
	\end{align*}
\end{lemma}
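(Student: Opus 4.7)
The plan is to prove Lemma~\ref{lemma:CompositionJaIa} by direct substitution, leveraging the pivotal identity derived in the paragraph preceding Definition~\ref{definition:AbelTypeTransform}, and then exploit parity. Setting $u = I_a(f)$, that identity reads
\begin{align*}
    |s|(1-s^2)^{-a} u(s) - 2a|s|\int_0^s t(1-t^2)^{-(a+1)} u(t)\, dt = f(s) + 2as \int_{-\sign(s)}^0 f(t)(1-t^2)^{a-1}\, dt
\end{align*}
for $s \neq 0$. Inserting this into Definition~\ref{definition:AbelTypeTransform} immediately gives
\begin{align*}
    J_a(u)[s] = f(s) + 2as \int_{-\sign(s)}^0 f(t)(1-t^2)^{a-1}\, dt - \frac{|s|\, u(1)}{2},
\end{align*}
so both conclusions of the lemma reduce to evaluating the correction term in the two parity cases.

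The routine tool is the substitution $t \mapsto -t$ together with Corollary~\ref{corollary:IaProperties}. For even $f$, the substitution yields $\int_{-1}^0 f(t)(1-t^2)^{a-1}\, dt = \int_0^1 f(t)(1-t^2)^{a-1}\, dt$, while $u(1) = 2a\int_{-1}^1 f(t)(1-t^2)^{a-1}\, dt = 4a\int_0^1 f(t)(1-t^2)^{a-1}\, dt$. The factor $\sign(s)$ coming from the lower limit $-\sign(s)$ combines with the $s$ in $2as$ to produce $2a|s|\int_0^1 f(t)(1-t^2)^{a-1}\, dt$, which cancels $\tfrac{|s|u(1)}{2}$ exactly, so $J_a \circ I_a(f) = f$. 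For odd $f$, $u(1) = 0$ vanishes, and the substitution makes $\int_{-\sign(s)}^0 f(t)(1-t^2)^{a-1}\, dt = -\int_0^1 f(t)(1-t^2)^{a-1}\, dt$ independent of the sign of $s$; multiplying by $2as$ gives precisely the claimed linear correction $-2as\int_0^1 f(t)(1-t^2)^{a-1}\, dt$.

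Finally, I would handle the boundary case $s=0$ separately. Writing out
\begin{align*}
    |t| u(t) = (1-t^2)^a f(t) + 2at \int_{-\sign(t)}^t f(\tau)(1-\tau^2)^{a-1}\, d\tau,
\end{align*}
continuity of $f$ forces the first term to tend to $f(0)$, while the second term is $O(|t|)$ since the integral is uniformly bounded by the definition of $D^a$. Hence $J_a(u)[0] = f(0)$, which matches both parity cases (trivially in the even case, and because $f(0) = 0$ in the odd case, where the linear correction also vanishes at $s = 0$). There is no serious obstacle: once the displayed identity from the preceding discussion is in hand, the proof amounts to careful sign and parity bookkeeping.
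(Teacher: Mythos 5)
Your proof is correct and follows essentially the same route as the paper: the paper itself gives no separate proof but appeals to "the previous discussion," which is precisely the identity $f(s)+2as\int_{-\sign(s)}^0f(t)(1-t^2)^{a-1}dt=|s|(1-s^2)^{-a}u(s)-2a|s|\int_0^s t(1-t^2)^{-(a+1)}u(t)dt$ that you invoke; your parity bookkeeping and the explicit treatment of $s=0$ (which the paper leaves implicit) both check out. One small note: you correctly use $u(1)=2a\int_{-1}^1 f(t)(1-t^2)^{a-1}dt$, which is what the definition of $I_a$ gives and what the paper's preceding discussion relies on, even though the displayed formula in Corollary~\ref{corollary:IaProperties} omits the factor $2a$ (a typo in the paper).
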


\begin{proposition}
	\label{proposition:ContinutiyJa}
	The map $J_a:\mathcal{C}\rightarrow D^a$ is well defined and continuous. Moreover, $I_a\circ J_a=Id$.
\end{proposition}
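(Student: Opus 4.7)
The plan is to organise the argument around the auxiliary function
\[
\Phi(s) := \int_0^s t(1-t^2)^{-(a+1)}u(t)\,dt,
\]
which is well-defined on $(-1,1)\setminus\{0\}$ because $|tu(t)|\le\|u\|_{\mathcal{C}}$, and which allows us to rewrite
\[
J_au(s) = |s|\bigl[(1-s^2)^{-a}u(s) - u(1)/2 - 2a\Phi(s)\bigr]
\]
for $s\ne 0$. Once this form is in hand, both the membership $J_au\in D^a$ and the identity $I_a\circ J_a=\mathrm{Id}$ reduce to direct computations based on the integration-by-parts identity $-2at(1-t^2)^{a-1}=\tfrac{d}{dt}(1-t^2)^a$, which is the same identity that underlies the derivative formula \eqref{eq:FormulaIaDerivative} defining $I_a$.

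First I would verify $J_au\in D^a$. Continuity on $(-1,1)\setminus\{0\}$ is immediate, and at $s=0$ the term $|s|(1-s^2)^{-a}u(s)$ tends to $\lim_{t\to 0}|t|u(t)$ by definition of $\mathcal{C}$, while the other two summands vanish (since $|s|\Phi(s)=O(s^2)$ near $0$). The central estimate is the asymptotic
\[
\lim_{s\to\pm 1}(1-s^2)^a\Phi(s) = \frac{u(\pm 1)}{2a},
\]
obtained by splitting $u(t)=u(\pm 1)+[u(t)-u(\pm 1)]$ in the integrand and using the primitive $\int t(1-t^2)^{-(a+1)}dt=(1-t^2)^{-a}/(2a)$. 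Combined with the condition $u(1)=u(-1)$ built into $\mathcal{C}$, this yields $(1-s^2)^a J_au(s)\to u(\pm 1)-2a\cdot u(\pm 1)/(2a)=0$, verifying the first defining property of $D^a$. For the second, integration by parts applied to $-2a\int_0^s t(1-t^2)^{a-1}\Phi(t)\,dt$ produces a boundary term $(1-s^2)^a\Phi(s)$ and an integral $-\int_0^s t(1-t^2)^{-1}u(t)dt$ that cancels the first summand of $(1-t^2)^{a-1}J_au(t)$, leaving the explicit formula
\[
\int_0^s(1-t^2)^{a-1}J_au(t)\,dt = -\frac{u(1)(1-(1-s^2)^a)}{4a} + (1-s^2)^a\Phi(s),
\]
with finite limit $u(1)/(4a)$ as $s\to 1$; the case $s\to -1$ is analogous. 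The continuity bound $\|J_au\|_{D^a}\le C\|u\|_{\mathcal{C}}$ then follows from these formulas together with the uniform estimate $(1-s^2)^a|\Phi(s)|\le 2^a\|u\|_{\mathcal{C}}$, which comes from $(1-s^2)^a(1-s)^{-a}=(1+s)^a\le 2^a$.

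Finally, the identity $I_a\circ J_a=\mathrm{Id}$ is established by direct substitution. For $s>0$, the formula above for $\int_0^s(1-t^2)^{a-1}J_au(t)dt$, together with the analogous computation on $(-1,0]$ that uses the asymptotic at $-1$ and gives $\int_{-1}^0(1-t^2)^{a-1}J_au(t)\,dt = u(1)/(4a)$, yields
\[
2a\int_{-1}^s(1-t^2)^{a-1}J_au(t)\,dt = \frac{(1-s^2)^a u(1)}{2} + 2a(1-s^2)^a\Phi(s),
\]
which exactly cancels the terms $-(1-s^2)^a u(1)/2 - 2a(1-s^2)^a\Phi(s)$ appearing in $s^{-1}(1-s^2)^a J_au(s)$, leaving $I_a(J_au)(s)=u(s)$. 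The case $s<0$ is symmetric. The main obstacle is not conceptual but bookkeeping: the $\sign(s)$-dependent lower limit in $I_a$ together with the $|s|$-factors in $J_a$ produce several boundary terms whose cancellation hinges on $u(1)=u(-1)$---which explains precisely why this condition is embedded in the definition of $\mathcal{C}$.
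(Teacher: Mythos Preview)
Your proof is correct and follows essentially the same route as the paper: both arguments hinge on the asymptotic $(1-s^2)^a\Phi(s)\to u(\pm1)/(2a)$ (the paper's \eqref{eq:limitIntegral_u}), the integration-by-parts identity for $\int_0^s(1-t^2)^{a-1}J_au(t)\,dt$ (the paper's \eqref{eq:estimateJaHelpEquation}), and a direct computation for $I_a\circ J_a=\mathrm{Id}$. The only notable technical difference is that you establish the key asymptotic by splitting off the constant $u(\pm1)$ and using the explicit primitive of $t(1-t^2)^{-(a+1)}$, whereas the paper uses the substitution $x=(1-t)/(1-s)$ together with dominated convergence; your uniform bound $(1-s^2)^a|\Phi(s)|\le C\|u\|_{\mathcal{C}}$ is slightly under-argued (the constant should be $2^a/a$ rather than $2^a$), but the estimate is easily completed along the lines you indicate.
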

\begin{proof}
	We start with showing that $J_a(u)\in D^a$ for $u\in \mathcal{C}$. We first establish that
	\begin{align}
		\label{eq:limitIntegral_u}
		\lim\limits_{s\rightarrow\pm1}\sign(s)2a (1-s^2)^{a}\int_{0}^st(1-t^2)^{-(a+1)}u(t)dt=u(\pm 1).
	\end{align}
	We will only consider the limit $s\rightarrow1$, the other case is similar.	With the change of coordinates $x=\frac{1-t}{1-s}$ we obtain
	\begin{align*}
		&	2a (1-s^2)^{a}\int_{0}^st(1-t^2)^{-(a+1)}u(t)dt\\
		=&2a(1+s)^a(1-s)^a\int_{0}^s(1-t)^{-(a+1)}(1+t)^{-(a+1)}tu(t)dt\\
		=&2a (1+s)^{a} \int_{1}^{\frac{1}{1-s}}x^{-(a+1)}(2-(1-s)x)^{-(a+1)}\left[u(1-(1-s)x)(1-(1-s)x)\right]dx.
	\end{align*}
	As $a>0$, the function $x\mapsto x^{-(a+1)}$ is integrable over $[1,\infty)$. Moreover, $x\mapsto u(1-(1-s)x)(1-(1-s)x)$ is uniformly bounded by $\|u\|_{\mathcal{C}}$ for $x\in [1,\frac{1}{1-s}]$, and this function converges pointwise to $u(1)$ for $s\rightarrow1$. As $(2-(1-s)x)^{-(a+1)}\le 1$ for $x\in [1,\frac{1}{1-s}]$, dominated convergence thus implies
	\begin{align*}
			&\lim\limits_{s\rightarrow1}2a (1-s^2)^a\int_{0}^st(1-t^2)^{-(a+1)}u(t)dt=u(1)\cdot a \int_{1}^{\infty}x^{-(a+1)}dx=u(1).
	\end{align*} 
	We thus obtain \eqref{eq:limitIntegral_u}, and it is easy to see that this implies $\lim\limits_{s\rightarrow\pm 1}(1-s^2)^aJ_a(u)[s]=0$.\\
	Moreover, the change of variables also provides the estimate
	\begin{align}
		\label{eq:estimateJa1}
		&\left|2a (1-s^2)^{a}\int_{0}^st(1-t^2)^{-(a+1)}u(t)dt\right|
		\le a 2^{a+1} \int_{1}^{\infty}x^{-(a+1)}dx\|u\|_{\mathcal{C}}.
	\end{align}
	Let us now turn to the second condition in Definition \ref{def:Dj}. Note that for $u\in \mathcal{C}$, $s\ne 0$,
	\begin{align}
		\label{eq:FormulaJaDerivative}
		J_a(u)[s]=
		|s|\frac{-u(1)}{2}+\sign(s)(1-s^2)^{-(a-1)}\frac{d}{ds}\left[(1-s^2)^{a}\int_{0}^s t(1-t^2)^{-(a+1)}u(t)dt\right].
	\end{align}
	In particular, 
	\begin{align}
		\label{eq:estimateJaHelpEquation}
			&\sign(s)\int_0^sJ_a(u)[t](1-t^2)^{a-1}dt\\
			\notag
			=&\frac{-u(1)}{2}\int_0^st(1-t^2)^{a-1}dt+\int_0^s\frac{d}{dx}\left[(1-x^2)^{a}\int_{0}^x t(1-t^2)^{-(a+1)}u(t)dt\right]dx\\
			\notag
			=&\frac{-u(1)}{2}\int_0^st(1-t^2)^{a-1}dt+(1-s^2)^{a}\int_{0}^s t(1-t^2)^{-(a+1)}u(t)dt.
	\end{align}
	Up to factor of $2a$, the second integral is the same as in \eqref{eq:limitIntegral_u}, and we have shown that this term converges to $u(1)$. Thus
	\begin{align*}
		\lim\limits_{s\rightarrow1}\int_0^sJ_a(u)[t](1-t^2)^{a-1}dt=\frac{-u(1)}{2}\int_0^1t(1-t^2)^{a-1}dt+\frac{u(1)}{2a},
	\end{align*}
	which is finite as $a>0$. The same holds when we consider the limit $s\rightarrow-1$, so $J_a(u)\in D^a$.\\
	In addition, applying the estimate in \eqref{eq:estimateJa1} to \eqref{eq:estimateJaHelpEquation}, we obtain
	\begin{align}
		\label{eq:estimateJa2}
		\left|\int_0^sJ_a(u)[t](1-t^2)^{a-1}dt\right|\le \frac{|u(1)|}{2}\int_0^st(1-t^2)^{a-1}dt+2^{a}\int_1^\infty x^{-(a+1)}dx \|u\|_{\mathcal{C}},
	\end{align}
	where both terms are bounded by constant multiple of $\|u\|_{\mathcal{C}}$ independent of $u\in\mathcal{C}$. Combining \eqref{eq:estimateJa1} and \eqref{eq:estimateJa2}, we obtain a constant $C>0$ such that
	\begin{align*}
		\|J_a(u)\|_{D^a}\le C\|u\|_{\mathcal{C}}.
	\end{align*}
	Thus $J_a:\mathcal{C}\rightarrow D^a$ is well defined and continuous.\\
	Finally, let us show that $I_a\circ J_a(u)=u$. Combining \eqref{eq:FormulaIaDerivative} and \eqref{eq:FormulaJaDerivative}, we obtain for $u\in \mathcal{C}$
	\begin{align*}
		I_a\circ J_a(u)[s]=&|s|^{-1}(1-s^2)^{a+1}\frac{d}{ds}\left[(1-s^2)^{-a}\frac{-u(1)}{2}\int_{-\sign(s)}^s |t| (1-t^2)^{a-1} dt \right]\\
		&+s^{-1}(1-s^2)^{a+1}\frac{d}{ds}\left[(1-s^2)^{-a}\int_{-\sign(s)}^s\frac{d}{dt}\left[(1-t^2)^{a}\int_{0}^t x(1-x^2)^{-(a+1)}u(x)dx\right] dt\right].
	\end{align*}
	For the first term, we calculate
	\begin{align*}
		\int_{-\sign(s)}^s |t| (1-t^2)^{a-1} dt=&\sign(s)\left[-\frac{1}{2a}(1-s^2)^a+\frac{1}{a}\right].
	\end{align*}
	Applying \eqref{eq:limitIntegral_u} to the second term, we thus obtain for $s\ne0$,
	\begin{align*}
		I_a\circ J_a(u)[s]=&s^{-1}(1-s^2)^{a+1}\frac{d}{ds}\left[(1-s^2)^{-a}\frac{-u(1)}{2a} \right]\\
		&+s^{-1}(1-s^2)^{a+1}\frac{d}{ds}\left[(1-s^2)^{-a}\left[(1-s^2)^{a}\int_{0}^s x(1-x^2)^{-(a+1)}u(x)dx-\frac{-u(-1)}{2a}\right] \right]\\
		=&u(s).
	\end{align*}
	Here we used that $u(1)=u(-1)$ in the last step. This completes the proof.
\end{proof}

\subsection{Valuations on convex bodies}
	
For a comprehensive background on valuations on convex bodies we refer to \cite{AleskerIntroductiontheoryvaluations2018,BernigAlgebraicintegralgeometry2011,FuAlgebraicintegralgeometry2014} as well as \cite[Section 6]{SchneiderConvexbodiesBrunn2014}. In this section we collect the results needed in this article.\\

We start with the following geometric construction of valuations on convex bodies (and refer to \cite{AleskerTheoryvaluationsmanifolds.2006} for further details). Note that to any $K\in\mathcal{K}(\R^n)$, we can associated the set
\begin{align*}
	\nc(K):=\{(x,v)\in\R^n\times S^{n-1}:v\text{ outer normal to } K \text{ in }x\in\partial K\},
\end{align*}
which is a compact Lipschitz submanifold of the sphere bundle $S\R^n:=\R^n\times S^{n-1}$ of dimension $n-1$ that carries a natural orientation. We may thus consider $\nc(K)$ as an integral $(n-1)$-current on $S\R^n$, called the \emph{normal cycle} of $K$, which we will denote by $\omega\mapsto \nc(K)[\omega]$, where $\omega\in\Omega^{n-1}(S\R^n)$ is a smooth $(n-1)$-form. More generally, we can associate to any $\omega\in \Omega^{n-1}(S\R^n)$ and $K\in\mathcal{K}(\R^n)$ the signed Radon measure on $S^{n-1}$ given by
\begin{align*}
	\Phi_\omega(K)[U]:=\nc(K)[1_{\pi_2^{-1}(U)}\omega]\quad\text{for}~U\subset S^{n-1}~\text{Borel set},
\end{align*}
where $\pi_2:\R^n\times S^{n-1}\rightarrow S^{n-1}$ denotes the projection onto the second factor. If the differential form $\omega$ is translation invariant, then the map $K\mapsto \Phi_\omega$ is a smooth area measure in the sense of \cite{WannererIntegralgeometryunitary2014,Wannerermoduleunitarilyinvariant2014}. For example, choose standard coordinates $(w_1,\dots,w_n)$ on $\R^n$ with induced coordinates $(v_1,\dots,v_n)$ on $S^{n-1}$. Consider the differential forms on $\R^n\times S^{n-1}$ for $0\le j\le n-1$ given by
\begin{align}
	\label{eq:definitonKappaNJ}
	\kappa^n_j:=\frac{1}{j!(n-j-1)!}\sum_{\pi\in S_n}\sign(\pi)v_{\pi(1)}dw_{\pi(2)}\dots dw_{\pi(j+1)}dv_{\pi(j+2)}\dots dv_{\pi(n)},
\end{align}
where we suppress the wedge product in the notation. Then 
\begin{align}
	\label{equation:DifferentialFormSurfaceArea}
	S_j(K)=\frac{1}{(n-j)\omega_{n-j}}\Phi_{\kappa^n_j}(K),
\end{align}
compare \cite[Section 2.1]{FuAlgebraicintegralgeometry2014}. In particular, integrals of continuous functions with respect to the area measures can be interpreted as integrals over the normal cycle. Let us remark that the differential forms are not uniquely determined by the associated measure-valued functionals or valuations. We refer to the work by Bernig and Bröcker \cite{BernigBroeckerValuationsmanifoldsRumin2007} for a description of the kernel and only remark that the normal cycle vanishes on multiples of the contact form $\alpha\in \Omega^1(S\R^n)$ given by
\begin{align*}
	\alpha=\sum_{i=1}^nw_idv_i.
\end{align*}

Let us now return to the space $\Val(\R^n)$ of all continuous and translation invariant valuations on $\mathcal{K}(\R^n)$. First, the homogeneous decomposition \eqref{eq:McMullenDecomp}
implies that 
\begin{align*}
	\|\mu\|=\sup_{K\subset B_1(0)}|\mu(K)|\quad\text{for}~\mu\in \Val(\R^n)
\end{align*}
defines a norm that metrizes the topology of uniform convergence on compact subsets on $\Val(\R^n)$, so in particular, this equips $\Val(\R^n)$ with the structure of a Banach space. We have a natural continuous representation of the general linear group $\GL(n,\R)$ on $\Val(\R^n)$ given by
\begin{align*}
	[\pi(g)\mu](K)=\mu(g^{-1}K)\quad\text{for}~g\in\GL(n,\R),~\mu\in\Val(\R^n),~K\in\mathcal{K}(\R^n).
\end{align*}
The smooth vectors of this representation are called \emph{smooth valuations}, that is, a valuation $\mu\in\Val(\R^n)$ is called smooth if the map
\begin{align*}
	\GL(n,\R)&\rightarrow\Val(\R^n)\\
	g&\mapsto \pi(g)\mu
\end{align*}
is a smooth map. This notion was first introduced by Alesker \cite{AleskerHardLefschetztheorem2003}. We denote the subspace of smooth valuation by $\Val(\R^n)^{\infty}$ and remark that it is a Fr\'echet space with respect to the G\r{a}rding topology. Note that this notion is obviously compatible with the homogeneous decomposition, so $\Val_j(\R^n)^{\infty}= \bigoplus_{j=0}^n\Val_j(\R^n)^{\infty}$. Furthermore, $\Val_j(\R^n)^{\infty}\subset \Val_j(\R^n)$ is dense with respect to the norm topology. By averaging any approximating sequence with respect to the Haar measure on $\SO(n-1)$, this implies that $\Val_j(\R^n)^{\infty,\SO(n-1)}$ is dense in $\Val_j(\R^n)^{\SO(n-1)}$.\\

Recall that we are considering elements in $\Val_j(\R^n)^{\SO(n-1)}$. These belong to the subspace $\Val_j(\R^n)^{\mathrm{sph}}$ of so-called \emph{spherical valuations} considered in \cite{SchusterWannererMinkowskivaluationsgeneralized2018}. We omit the precise definition and only remark that the space $\Val_j(\R^n)^{\infty,\mathrm{sph}}$ of smooth spherical valuations is dense in $\Val_j(\R^n)^{\mathrm{sph}}$ and contains the space $\Val_j(\R^n)^{\infty,\SO(n-1)}$, compare \cite [Section 3]{SchusterWannererMinkowskivaluationsgeneralized2018}. In addition, both spaces are invariant under the group $\SO(n)$, and we have the following characterization of smooth spherical valuations.
\begin{theorem}[Schuster--Wannerer~\cite{SchusterWannererMinkowskivaluationsgeneralized2018} Theorem 4.1]
	\label{theorem:SchusterWannererClassification}
	For $1\le j\le n-1$, the map $E_j:C^\infty_o(S^{n-1})\rightarrow\Val_j(\R^n)^{\infty,\mathrm{sph}}$, defined by
	\begin{align*}
		[E_j(f)](K)=\int_{S^{n-1}}fdS_j(K),
	\end{align*}
	is an $\SO(n)$-equivariant isomorphism of topological vector spaces.
\end{theorem}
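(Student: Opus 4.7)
The plan is to prove the theorem in four stages: well-definedness plus continuity of $E_j$, $\SO(n)$-equivariance together with containment of the image in smooth spherical valuations, injectivity on the centered subspace, and finally surjectivity onto $\Val_j(\R^n)^{\infty,\mathrm{sph}}$, from which continuity of the inverse follows by the open mapping theorem for Fr\'echet spaces.

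For $f \in C^\infty_o(S^{n-1})$, the functional $E_j(f)$ is a continuous, translation invariant, $j$-homogeneous valuation, since $S_j(\cdot)$ is a weakly continuous, measure-valued valuation of degree $j$. Using the identity \eqref{equation:DifferentialFormSurfaceArea}, one can rewrite
\begin{align*}
E_j(f)(K)=\frac{1}{(n-j)\omega_{n-j}}\,\nc(K)\bigl[(f\circ\pi_2)\,\kappa^n_j\bigr].
\end{align*}
Since $(f\circ\pi_2)\kappa^n_j$ is a smooth translation invariant differential form on $S\R^n$, this exhibits $E_j(f)$ as a smooth valuation via the standard principle that valuations represented by smooth translation invariant forms on the sphere bundle are smooth; continuity into the G\r{a}rding topology follows because the normal cycle pairing is continuous in the form relative to the $C^\infty$ topology, uniformly over bounded families of convex bodies. $\SO(n)$-equivariance is immediate from $S_j(gK,gU)=S_j(K,U)$, and containment of the image in $\Val_j(\R^n)^{\infty,\mathrm{sph}}$ is part of the definition of spherical valuations from \cite{SchusterWannererMinkowskivaluationsgeneralized2018}.

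For injectivity, suppose $E_j(f)=0$ for some $f\in C^\infty_o(S^{n-1})$. When $j=n-1$, the McMullen uniqueness statement recalled in the introduction immediately yields $f=0$, since $f$ already lies in $C_o(S^{n-1})$. For $1\le j\le n-2$ I would invoke the density of the linear span of $\{S_j(K):K\in\mathcal{K}(\R^n)\}$ in an appropriate space of centered signed measures on $S^{n-1}$, which is a classical consequence of injectivity (modulo linear functions) of the spherical cosine-type transforms in the relevant degree. This forces $f$ to be the restriction of a linear function, and the centering condition $\int v\,f(v)\,d\mathcal{H}^{n-1}(v)=0$ then gives $f=0$.

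The main obstacle, and the substantive part of the proof, is surjectivity. The natural strategy is via the representation of arbitrary smooth $j$-homogeneous valuations by translation invariant forms on $S\R^n$: every $\mu\in\Val_j(\R^n)^{\infty}$ equals $\Phi_\omega$ for some translation invariant $\omega\in\Omega^{n-1}(S\R^n)$ of the appropriate bidegree. The sphericity assumption should then constrain $\omega$, modulo the kernel of the normal cycle (multiples of the contact form $\alpha$ and exact forms, as described in \cite{BernigBroeckerValuationsmanifoldsRumin2007}), to be cohomologous to $g(v)\,\kappa^n_j$ for some $g\in C^\infty(S^{n-1})$. Subtracting the linear component of $g$ produces the desired preimage $f\in C^\infty_o(S^{n-1})$, using that linear functions integrate to zero against every centered measure $S_j(K)$. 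Once $E_j$ is established as a continuous bijection between Fr\'echet spaces, continuity of $E_j^{-1}$ is automatic from the open mapping theorem.
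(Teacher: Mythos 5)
The theorem you are proving is not proved in this paper at all: it is cited verbatim from Schuster--Wannerer (their Theorem 4.1) and used as a black box. The paper's proof of its own main results relies on this statement, together with Corollary~\ref{corollary:LefschetzIsomorphismSpherical} and the density of smooth spherical valuations, but never re-derives it. So there is nothing in the present text for your proposal to be compared against; I'll instead assess it against the actual Schuster--Wannerer argument, which is representation-theoretic rather than differential-geometric. Their proof decomposes both $C^\infty_o(S^{n-1})$ and $\Val_j(\R^n)^{\infty,\mathrm{sph}}$ into irreducible $\SO(n)$-subspaces (spherical harmonics on one side, and the spherical part of the Alesker--Bernig--Schuster decomposition of $\Val_j^\infty$ on the other) and verifies via Schur's lemma that the intertwining map $E_j$ is nonzero, hence an isomorphism, on each multiplicity-one piece. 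That is a genuinely different route from yours.

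Your first two stages are fine: the rewriting via $\nc(K)$ and $\kappa^n_j$ does exhibit $E_j(f)$ as a smooth valuation, and $\SO(n)$-equivariance is immediate. The injectivity step is sketchy but salvageable; in fact, within the framework of this very paper it follows cleanly by evaluating on the cones $C_h$ and using Lemma~\ref{lemma:IntegrationCones} together with Corollary~\ref{corollary:KernelPhij}, without any appeal to cosine-transform injectivity (which, as stated, is not quite the relevant transform here). The substantive gap is in your surjectivity argument. You assert that sphericity ``should then constrain $\omega$, modulo the kernel of the normal cycle, to be cohomologous to $g(v)\kappa^n_j$''. This is not a lemma you can wave at: it is precisely the nontrivial content of the theorem. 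Translation-invariant $(n-1)$-forms on $S\R^n$ giving $j$-homogeneous valuations form a large space, and reducing a spherical one to the very special shape $g(v)\kappa^n_j$ modulo $\alpha$-multiples and exact forms requires identifying exactly which $\SO(n)$-isotypic components of this form space survive the Rumin/normal-cycle kernel and match those of $C^\infty(S^{n-1})$ --- that is, it requires the representation-theoretic input you were trying to avoid. As written, your step four assumes the conclusion. Either carry out the isotypic decomposition of translation-invariant forms on $S\R^n$ and show the spherical part is exactly the $g(v)\kappa^n_j$ slice, or abandon the form-theoretic detour and argue via Schur's lemma as in the original paper.
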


Let $\Lambda:\Val_j(\R^n)\rightarrow\Val_{j-1}(\R^n)$ denote the Lefschetz operator given by
\begin{align*}
	\Lambda\mu (K)=\frac{d}{dt}\Big|_0\mu(K+B_1(0)).
\end{align*}
We refer to \cite{AleskerHardLefschetztheorem2003,BernigBroeckerValuationsmanifoldsRumin2007} for the Hard-Lefschetz Theorem for smooth valuations and only discuss the properties of this operator relevant to this article. Note that $\Lambda$ is well defined and continuous with respect to the norm topology due to McMullen's decomposition \eqref{eq:McMullenDecomp}, since $t\mapsto \mu(K+tB_1(0))$ is a polynomial of degree at most $n$ in $t\ge 0$. Moreover, it is obviously $\SO(n)$-equivariant. It follows from the local Steiner formula \eqref{equation:SteinerAreaMeasures} that $\Lambda E_j=jE_{j-1}$. This directly implies the following (which was implicitly established in the proof of \cite[Theorem 4.1]{SchusterWannererMinkowskivaluationsgeneralized2018}).
\begin{corollary}
	\label{corollary:LefschetzIsomorphismSpherical}
	For $2\le j\le n-1$, $\Lambda:\Val_j(\R^n)^{\infty,\mathrm{sph}}\rightarrow\Val_{j-1}(\R^n)^{\infty,\mathrm{sph}}$ is an $\SO(n)$-equivariant topological isomorphism.
\end{corollary}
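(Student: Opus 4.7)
The plan is to reduce the claim to Theorem \ref{theorem:SchusterWannererClassification} combined with the identity $\Lambda E_j = j E_{j-1}$, which the excerpt notes is a direct consequence of the local Steiner formula \eqref{equation:SteinerAreaMeasures}.

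First I would observe that when $2 \le j \le n-1$, both indices $j$ and $j-1$ lie in the range $1 \le \cdot \le n-1$ where Theorem \ref{theorem:SchusterWannererClassification} applies. Hence both
\begin{align*}
E_j : C_o^\infty(S^{n-1}) \to \Val_j(\R^n)^{\infty,\mathrm{sph}}, \qquad E_{j-1} : C_o^\infty(S^{n-1}) \to \Val_{j-1}(\R^n)^{\infty,\mathrm{sph}}
\end{align*}
are $\SO(n)$-equivariant isomorphisms of topological vector spaces. Rewriting $\Lambda E_j = j E_{j-1}$ as
\begin{align*}
\Lambda\big|_{\Val_j(\R^n)^{\infty,\mathrm{sph}}} = j\, E_{j-1} \circ E_j^{-1}
\end{align*}
then exhibits the restriction of $\Lambda$ to the space of smooth spherical $j$-homogeneous valuations as a composition of topological isomorphisms (scaled by the nonzero factor $j$). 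In particular, the restriction maps into $\Val_{j-1}(\R^n)^{\infty,\mathrm{sph}}$, is surjective onto it, and is a homeomorphism onto its image.

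The $\SO(n)$-equivariance of the resulting map is automatic: $\Lambda$ is itself $\SO(n)$-equivariant since the unit ball $B_1(0)$ is $\SO(n)$-invariant and the $\SO(n)$-action on $\mathcal{K}(\R^n)$ commutes with Minkowski addition; alternatively it follows from the equivariance of both $E_j$ and $E_{j-1}$ in the factorization above.

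I do not anticipate any substantial obstacle: the corollary is essentially a bookkeeping consequence of the Schuster--Wannerer classification (and, as the excerpt itself indicates, is implicit in their proof of \cite[Theorem 4.1]{SchusterWannererMinkowskivaluationsgeneralized2018}). The only small sanity check is that the scalar $j$ appearing in $\Lambda E_j = jE_{j-1}$ is nonzero throughout the admissible range $j\ge 2$, which is clear, so no invertibility issue arises.
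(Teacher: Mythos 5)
Your argument is correct and is essentially the paper's own: the paper states the corollary as a direct consequence of $\Lambda E_j = jE_{j-1}$ together with Theorem \ref{theorem:SchusterWannererClassification}, and your factorization $\Lambda|_{\Val_j(\R^n)^{\infty,\mathrm{sph}}} = j\,E_{j-1}\circ E_j^{-1}$ is just the explicit form of that deduction.
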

The following consequence will be used in the second proof of Theorem \ref{maintheorem:Classification} in Section \ref{section:ClassificationProof2}
\begin{proposition}
	\label{proposition:LefschetzInjectiveZonal}
	For $2\le j\le n-1$, the map $\Lambda:\Val_j(\R^n)^{\mathrm{sph}}\rightarrow\Val_{j-1}(\R^n)^{\mathrm{sph}}$ is injective.
\end{proposition}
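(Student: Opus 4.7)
The strategy is to decompose $\mu\in\Val_j(\R^n)^{\mathrm{sph}}$ along the $\SO(n)$-isotypic components and reduce to the known injectivity of $\Lambda$ on the smooth spherical part (which is contained in Corollary \ref{corollary:LefschetzIsomorphismSpherical}). Since $\Lambda$ is defined using Minkowski addition with the $\SO(n)$-invariant Euclidean ball, it commutes with the continuous isometric $\SO(n)$-action $\pi$ on $\Val(\R^n)$. For each irreducible representation $\rho$ of $\SO(n)$ with character $\chi_\rho$ and dimension $d_\rho$, the isotypic projection
\begin{align*}
	P_\rho \nu := d_\rho \int_{\SO(n)} \overline{\chi_\rho(g)}\, \pi(g)\nu\, dg
\end{align*}
defines a continuous endomorphism of $\Val_j(\R^n)^{\mathrm{sph}}$ commuting with $\Lambda$. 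Assuming $\Lambda\mu=0$, one gets $\Lambda(P_\rho\mu)=0$ for every $\rho$, and it suffices to show $P_\rho\mu=0$ for every $\rho$: if not, then a standard Peter--Weyl argument applied to the continuous matrix coefficient $g\mapsto \ell(\pi(g)\mu)$ on $\SO(n)$, for a suitable continuous functional $\ell$ with $\ell(\mu)\neq 0$, would produce a non-vanishing isotypic component and a contradiction.

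To identify the image of $P_\rho$, I would use Theorem \ref{theorem:SchusterWannererClassification} to identify $\Val_j(\R^n)^{\infty,\mathrm{sph}}$ $\SO(n)$-equivariantly with $C^\infty_o(S^{n-1})$. Under this identification, the $\rho$-isotypic part of the smooth spherical valuations is the finite-dimensional subspace $E_j(\mathcal{H}_k)$, where $\mathcal{H}_k\subset C^\infty_o(S^{n-1})$ is the space of spherical harmonics carrying $\rho$ (for the range $2\le j\le n-1$ we have $n\ge 3$, and for those $n$ the $\mathcal{H}_k$ are irreducible and pairwise non-equivalent). Since $E_j(\mathcal{H}_k)$ is finite-dimensional it is norm-closed in $\Val_j(\R^n)^{\mathrm{sph}}$, so the density of $\Val_j(\R^n)^{\infty,\mathrm{sph}}$ together with the continuity of $P_\rho$ forces $P_\rho(\Val_j(\R^n)^{\mathrm{sph}})\subset E_j(\mathcal{H}_k)$. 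Consequently $P_\rho\mu = E_j(f)$ for some $f\in\mathcal{H}_k$.

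From here the argument closes immediately: the identity $\Lambda E_j = jE_{j-1}$, which follows from the local Steiner formula, together with $\Lambda(P_\rho\mu)=0$ yields $E_{j-1}(f)=0$, and injectivity of $E_{j-1}$ on $C^\infty_o(S^{n-1})$ (again by Theorem \ref{theorem:SchusterWannererClassification}) gives $f=0$, so $P_\rho\mu=0$. Combined with the Peter--Weyl step this proves $\mu=0$.

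The step I expect to require the most care is the identification of the $\rho$-isotypic component of the full Banach representation $\Val_j(\R^n)^{\mathrm{sph}}$ with that of its dense smooth subspace. It is not deep, only invoking standard facts (continuity of $P_\rho$, closedness of finite-dimensional subspaces, strong continuity of the $\SO(n)$-action on $\Val(\R^n)$), but it is the point where the passage from smooth to general spherical valuations actually takes place; once this is in hand the rest is a short computation.
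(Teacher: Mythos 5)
Your proof is correct, and the high-level strategy is the same as the paper's: transfer the injectivity of $\Lambda$ from the dense smooth subspace $\Val_j(\R^n)^{\infty,\mathrm{sph}}$, where Corollary \ref{corollary:LefschetzIsomorphismSpherical} (equivalently, $\Lambda E_j = jE_{j-1}$ plus the bijectivity of $E_{j-1}$ in Theorem \ref{theorem:SchusterWannererClassification}) is available, to the full Banach space $\Val_j(\R^n)^{\mathrm{sph}}$ using the $\SO(n)$-equivariance and norm-continuity of $\Lambda$. The difference is in how you produce smooth vectors. The paper mollifies directly: it picks an approximate identity $\phi_m\in C_c^\infty(\SO(n))$ and forms $\mu_m := \int_{\SO(n)}\phi_m(g)\,\pi(g)\mu\,dg$, which is a smooth spherical valuation converging to $\mu$; equivariance gives $\Lambda\mu_m = \int\phi_m(g)\,\pi(g)\Lambda\mu\,dg = 0$, so each $\mu_m$ vanishes by the smooth case, and hence so does $\mu$. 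You instead pass through the full $\SO(n)$-isotypic decomposition: $P_\rho$ commutes with $\Lambda$, $P_\rho\mu$ lands (via Theorem \ref{theorem:SchusterWannererClassification}, density, and closedness of the finite-dimensional $E_j(\mathcal{H}_k)$) in a space of smooth spherical valuations, and Peter--Weyl separates vectors by their isotypic projections. Both arguments are sound, but the paper's mollification is shorter and avoids two of the ingredients you invoke, namely the Peter--Weyl separation step and the finite-dimensionality and closedness of the isotypic components; conversely, your route makes the decomposition into spherical-harmonic types explicit, which is conceptually closer to the spirit of Schuster--Wannerer's classification and would generalize to settings where one only knows the isotypic structure rather than having a uniform smoothing available.
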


\begin{remark}
	Note that Theorem \ref{maintheorem:Classification} implies in particular that the restriction of $\Lambda$ to $\SO(n-1)$-invariant valuations is not surjective for any $2\le j\le n-1$ since $D^{\frac{n-j-1}{2}}$ is a proper subspace of $D^{\frac{n-(j-1)-1}{2}}$.
\end{remark}
\begin{proof}[Proof of Proposition \ref{proposition:LefschetzInjectiveZonal}]
	Take a smooth approximation $\phi_m\in C_c^\infty(\SO(n))$ of $\delta_{Id}$ and consider for $\mu\in \Val_j(\R^n)^{\mathrm{sph}}$ the valuation
	\begin{align*}
		\mu_m(K):=\int_{\SO(n)}\phi_m(g)\mu(g^{-1}K)dg.
	\end{align*}
	It follows from the discussion in \cite[Section 3]{SchusterWannererMinkowskivaluationsgeneralized2018} that $\mu_m$ is a smooth spherical valuation for every $m\in\mathbb{N}$. In addition, the sequence $(\mu_m)_m$ converges to $\mu$ in $\Val(\R^n)$. Moreover, since $\Lambda$ is $\SO(n)$-equivariant and continuous with respect to the norm topology on $\Val_j(\R^n)^{\mathrm{sph}}$, we obtain
	\begin{align*}
		\Lambda\mu_m(K)=\int_{\SO(n)}\phi_m(g)\Lambda\mu(g^{-1}K)dg.
	\end{align*}
	If $\Lambda\mu=0$, this implies $\Lambda\mu_m=0$ for all $m\in\mathbb{N}$. Since $\Lambda$ is injective on smooth spherical valuations by Corollary \ref{corollary:LefschetzIsomorphismSpherical}, $\mu_m=0$ for all $m$ and consequently $\mu=0$.
\end{proof}

\section{Principal value integrals with respect to area measures}
	\label{section:SingularIntegrals}
	\subsection{Bounds on the integral of zonal functions}
	\label{section:SingularValuationsEstimates}
	
	Our proof of Theorem \ref{maintheorem:SingularValuations} is based on the following inequality.
	\begin{theorem}
		\label{theorem:estimateIntegrationSj}
		Let $1\le j\le n-2$. There exists a constant $C_{n,j}>0$ such that
		\begin{align*}
			\left|\int_{S^{n-1}}f(v_n)dS_j(K,v)\right|\le C_{n,j}\|h_K\|_\infty^j\|f\|_{D^{\frac{n-j-1}{2}}}
		\end{align*}
		for all $f\in C([-1,1])$, $K\in\mathcal{K}(\R^n)$.
	\end{theorem}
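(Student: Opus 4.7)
The plan is to reduce the estimate to a one-dimensional problem by pushing $S_j(K)$ forward under $v\mapsto v_n$, and then to combine a Firey-type bound on spherical caps with repeated integration by parts. Write $a=\tfrac{n-j-1}{2}$ and let $\sigma_K^j$ denote the pushforward, so that $\int_{S^{n-1}}f(v_n)\,dS_j(K,v)=\int_{-1}^1 f(s)\,d\sigma_K^j(s)$. By scaling we may assume $\|h_K\|_\infty=1$; by a standard approximation (mollifying $f$ and approximating $K$ by smooth strictly convex bodies), we may further assume $f\in C^1([-1,1])$ and that $\sigma_K^j$ has a smooth density, so every integration by parts below is rigorously justified. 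By symmetry it suffices to bound the contribution from $[0,1]$.

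The key input is a pointwise Firey-type estimate $\tau(s):=\sigma_K^j([s,1])\le C(1-s^2)^a$ for $s\in[0,1)$. Fubini gives
\[
\int_0^1 f\,d\sigma_K^j=f(0)\tau(0)+\int_0^1 f'(s)\tau(s)\,ds,
\]
and $|f(0)\tau(0)|\le C\|f\|_{D^a}$ is immediate. Introduce
\[
V(s):=f(s)(1-s^2)^a-2a\int_s^1 f(t)\,t\,(1-t^2)^{a-1}\,dt,
\]
a primitive of $f'(s)(1-s^2)^a$ satisfying $V(1)=0$ and $\|V\|_\infty\le C\|f\|_{D^a}$: the first summand is controlled by the first piece of the norm, and the second by integration by parts $2a\int_s^1 t\,dF(t)$ using the bound $|F|\le\|f\|_{D^a}$ built into the definition of $\|\cdot\|_{D^a}$, where $F(s):=\int_0^s f(t)(1-t^2)^{a-1}\,dt$. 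Writing $\tau(s)=(1-s^2)^a\rho(s)$ with $0\le\rho\le C$ (by Firey), Stieltjes integration by parts produces
\[
\int_0^1 f'(s)\tau(s)\,ds=[\rho V]_0^1-\int_0^1 V\,d\rho,
\]
and the boundary contribution at $s=1$ vanishes thanks to $V(1)=0$ and the boundedness of $\rho$.

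The main obstacle is bounding the remaining term $\int_0^1 V\,d\rho$. The signed measure $d\rho$ decomposes as $-(1-s^2)^{-a}\,d\sigma_K^j(s)+2as(1-s^2)^{-a-1}\tau(s)\,ds$, and neither summand converges absolutely when merely tested against $\|V\|_\infty$, owing to a logarithmic divergence at $s=1$. The pointwise Firey bound is thus insufficient on its own, and one must exploit cancellation between the two summands, or equivalently a quantitative rate for the vanishing of $V$ at $s=1$. This is precisely where the new \emph{weighted} Firey-type inequality advertised in the introduction is expected to enter; I would anticipate the paper deriving it from the normal cycle representation $S_j(K)=\frac{1}{(n-j)\omega_{n-j}}\Phi_{\kappa_j^n}(K)$ combined with a Stokes argument on the closed current $\nc(K)$, using judiciously chosen differential forms that absorb the singular weight $(1-v_n^2)^{-a}$ into an exact form modulo multiples of the contact form.
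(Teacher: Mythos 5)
Your one-dimensional reduction via the pushforward $\sigma_K^j$ and the two-step integration by parts is a genuinely different route from the paper's, and you have correctly identified its fatal flaw. The point of failure is exactly where you say it is: after writing $\tau(s)=(1-s^2)^a\rho(s)$ and integrating by parts once more, you are left with $\int_0^1 V\,d\rho$, where $d\rho=-(1-s^2)^{-a}\,d\tau+2as(1-s^2)^{-a-1}\tau(s)\,ds$, and the pointwise bound $\tau(s)\le C(1-s^2)^a$ gives at best $\rho\le C$ while saying nothing about the total variation of $\rho$; both summands of $d\rho$ carry a logarithmic divergence at $s=1$ that cannot be paired against $\|V\|_\infty$ alone. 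Knowing $V(1)=0$ and $\|V\|_\infty\le C\|f\|_{D^a}$ is not enough because no rate is available, and there is no a priori sign or monotonicity for $d\rho$ that would permit a total-variation bound. So the gap is real, not cosmetic.

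Two further remarks on the relationship to the paper. First, the logical order is reversed: the paper proves the estimate first and then \emph{derives} Firey's cap inequality (Theorem~\ref{theorem:FireyVolumeSphericalCap}) as a corollary, by testing against continuous functions $0\le f\le \mathbbm{1}_{[r,1]}$ and observing $\|f\|_{D^{(n-j-1)/2}}\le 2(1-r^2)^{(n-j-1)/2}$. Using Firey's original 1970 theorem as an input is of course not circular, but as you found, the pointwise cap bound by itself is too weak to close the argument; what is needed is precisely a quantitative $L^1$-type control on the density of $\sigma_K^j$ against the weight $(1-s^2)^{-1}$ near the poles, which a pointwise tail bound does not provide. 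Second, your guess about the resolution (a Stokes argument on $\nc(K)$ with forms absorbing the singular weight) is in the right neighborhood but not the actual mechanism. The paper instead fixes a partition of unity $\phi_+,\phi_0,\phi_-$ isolating the two poles from the equator; the equatorial term is trivially bounded; for each polar cap, the normal cycle is pushed forward via the map $Q$ of Proposition~\ref{propostion:Relation NormalDiffCycle} to the \emph{differential cycle} of the convex function $h_K(\cdot,-1)$, and the resulting Monge--Amp\`ere-type integrals are then controlled using the already-established weighted estimates of Proposition~\ref{proposition:GeneralBoundMA} and Proposition~\ref{prop:inequFuncIntVol}, the second of which is the crucial \emph{signed} estimate that supplies exactly the cancellation you observed is missing. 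That input cannot be replaced by the classical cap inequality.
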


	Before we turn to the proof of Theorem \ref{theorem:estimateIntegrationSj}, let us discuss an application to the volume of the spherical caps 
	\begin{align*}
		C_r(v):=\{u\in S^{n-1}: \langle v,u\rangle >r\},\quad v\in S^{n-1},~r\in [0,1].
	\end{align*}
	The following result was established by Firey \cite{FireyLocalbehaviourarea1970} by approximation from a corresponding statement for polytopes. We give a new proof (with a different constant) based on Theorem \ref{theorem:estimateIntegrationSj}.
	\begin{theorem}[\cite{FireyLocalbehaviourarea1970}]
		\label{theorem:FireyVolumeSphericalCap}
		Let $1\le i\le n-2$ and $K\in\mathcal{K}(\R^n)$ a convex body. There exists a constant $A_{n,j}>0$ independent of $K$ such that for every $v\in S^{n-1}$,
		\begin{align*}
			S_j(K)[C_r(v)]\le A_{n,j}(\diam K)^{j}(1-r^2)^{\frac{n-j-1}{2}}\quad \text{for}~r\in[0,1].
		\end{align*}
	\end{theorem}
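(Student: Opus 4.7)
\emph{The plan.} My strategy is to dominate the indicator $\mathbf{1}_{C_r(v)}$ by a continuous zonal function and apply Theorem \ref{theorem:estimateIntegrationSj}. By rotation equivariance of the area measures I may take $v = e_n$, so that $C_r(e_n) = \{u \in S^{n-1} : u_n > r\}$, and by translation invariance of $S_j$ I may translate $K$ so that $0 \in K$, giving $\|h_K\|_\infty \le \diam K$. Writing $a = \frac{n - j - 1}{2}$, note $a > 0$ since $j \le n - 2$.

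The range $r \in [0, \tfrac{1}{2}]$ follows already from the trivial bound $S_j(K)(S^{n-1}) \le C_n (\diam K)^j$ furnished by the local Steiner formula \eqref{equation:SteinerAreaMeasures}, combined with $(1 - r^2)^a \ge (3/4)^a$; it therefore suffices to treat $r \in (\tfrac{1}{2}, 1]$. In this range I define $f_r \in C([-1, 1])$ to be the piecewise-linear function equal to $1$ on $[r, 1]$, to $0$ on $[-1, 2r - 1]$, and interpolated linearly on $[2r - 1, r]$. Since $f_r \ge \mathbf{1}_{(r, 1]}$ and $S_j(K)$ is non-negative,
\begin{align*}
S_j(K)[C_r(e_n)] \;\le\; \int_{S^{n-1}} f_r(u_n) \, dS_j(K, u) \;\le\; C_{n, j} \|h_K\|_\infty^j \|f_r\|_{D^a}
\end{align*}
by Theorem \ref{theorem:estimateIntegrationSj}.

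The key step is then the norm estimate $\|f_r\|_{D^a} \le C_a (1 - r^2)^a$. Since $f_r$ is supported in $[2r - 1, 1] \subset [0, 1]$ (as $r > \tfrac12$) and $(1 - s^2)^a$ is decreasing on $[0, 1]$, the first term of the norm is bounded by $(1 - (2r - 1)^2)^a = (4r(1 - r))^a \le 2^a (1 - r^2)^a$, using $4r(1-r) \le 2(1-r^2)$ for $r \in [0, 1]$. For the second term, the support condition and $f_r \le 1$ yield
\begin{align*}
\left| \int_0^s (1 - t^2)^{a - 1} f_r(t) \, dt \right| \;\le\; \int_{2r - 1}^1 (1 - t^2)^{a - 1} dt,
\end{align*}
and the substitution $t = 1 - u$ together with $a > 0$ produces a bound of order $(1 - r)^a \le (1 - r^2)^a$. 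Combining both estimates closes the argument with a constant $A_{n, j}$ depending only on $n$ and $j$. I do not anticipate any substantive obstacle: once Theorem \ref{theorem:estimateIntegrationSj} is available, the proof collapses to this short bookkeeping exercise with the piecewise-linear cutoff $f_r$, and the geometric content originally encoded in Firey's polytopal approximation has been repackaged into the weighted integral inequality.
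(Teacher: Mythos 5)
Your proof is correct and follows essentially the same route as the paper: reduce to $v=e_n$, apply Theorem \ref{theorem:estimateIntegrationSj} to a continuous zonal cutoff comparable to $\mathbf{1}_{[r,1]}$, and verify that the $D^{\frac{n-j-1}{2}}$-norm of the cutoff is of order $(1-r^2)^{\frac{n-j-1}{2}}$. The only cosmetic differences are that you dominate the indicator from above by the explicit piecewise-linear $f_r$ (splitting off $r\le\tfrac12$ separately), whereas the paper takes a supremum over continuous functions $0\le f\le \mathbf{1}_{[r,1]}$ dominated from below; both yield the same bound.
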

	\begin{proof}
		We may without loss of generality assume that $v=e_n$. As the surface area measures are translation invariant, we may further assume that $\diam(K)=\|h_K\|_\infty$. Let $f\in C([-1,1])$ be a function with $0\le f\le 1$ and $f\le 1_{[r,1]}$. Note that
		\begin{align*}
			\|f\|_{D^{\frac{n-j-1}{2}}}=&\sup_{s\in (-1,1)}(1-s^2)^{\frac{n-j-1}{2}}|f(s)|+\sup_{s\in (-1,1)}\left|\int_{0}^s(1-s^2)^{\frac{n-j-3}{2}}f(s)ds\right|\\
			\le&(1-r^2)^\frac{n-j-1}{2}+\int_{r}^1(1-s^2)^{\frac{n-j-3}{2}}ds\\
			\le&(1-r^2)^\frac{n-j-1}{2}+(1-r)(1-r^2)^{\frac{n-j-3}{2}}\\
			\le&(1-r^2)^\frac{n-j-1}{2}+(1-r^2)(1-r^2)^{\frac{n-j-3}{2}}=2(1-r^2)^\frac{n-j-1}{2}.
		\end{align*}
		Theorem \ref{theorem:estimateIntegrationSj} implies that
		\begin{align*}
				\int_{S^{n-1}}f(v_n)dS_j(K,v)\le C_{n,j}\|h_K\|_\infty^j\|f\|_{D^{\frac{n-j-1}{2}}}\le 2C_{n,j}\diam(K)^j(1-r^2)^\frac{n-j-1}{2}.
		\end{align*}
		We thus obtain
		\begin{align*}
			S_j(K)[C_r(e_n)]=\sup_{\substack{f\in C([-1,1])\\0\le f\le 1_{[r,1]}}}\int_{S^{n-1}}f(v_n)dS_j(K,v)\le 2C_{n,j}\diam(K)^j(1-r^2)^\frac{n-j-1}{2},
		\end{align*}
		which shows the claim with $A_{n,j}=2 C_{n,j}$.
	\end{proof}

	Let us now turn to the proof of Theorem \ref{theorem:estimateIntegrationSj}. We will split the integral into three different parts by removing a neighborhood of the equator $\{v_n=0\}$. For the sets containing the poles, we will use the following localization procedure: Note that the integrals given above can be realized by integrating the product of the given function with the differential form $\kappa_j^n$ over the normal cycle $\nc(K)$ of $K$, compare Section \ref{section:Prelim_valuationsBodies}. If we restrict ourselves to an open half sphere around one of the poles, this allows us to exploit a relation between the normal cycle of $K$ and the \emph{differential cycle} of the convex function $h_K(\cdot,-1)$ defined on $\R^{n-1}$. For simplicity, we discuss this object for functions defined on $\R^n$. Let $\Conv(\R^{n},\R)$ denote the space of all convex functions $h:\R^{n}\rightarrow\R$ and fix an orientation on $\R^{n}$. If $h$ is of class $C^2$, then the graph of its differential defines an oriented submanifold of $T^*\R^{n}$ of dimension $n$ and thus an integral current. Fu \cite{FuMongeAmperefunctions.1989} extended this construction to a large class of functions, including all convex functions, and it was shown in \cite{KnoerrSmoothvaluationsconvex2024} that this extension is continuous with respect to a suitable topology on $\Conv(\R^{n},\R)$ and the space of integral currents on $T^*\R^{n}$. We will denote the differential cycle of $h\in \Conv(\R^{n},\R)$ by $D(h)$.\\
	
	Let us return to the function $h_K(\cdot,-1)\in\Conv(\R^{n-1},\R)$ for $K\in \mathcal{K}(\R^n)$. We require the following relation between $\nc(K)$ and $D(h_K(\cdot,-1))$. Set $S^n_-:=\{v\in S^n: v_n<0\}$, $S_-\R^n:=\R^n\times S^n_-$. Consider the map
	\begin{align*}
		Q: S_-^{n-1}\R^n&\rightarrow\R^{n-1}\times\R^{n-1}\\
		(w_1,\dots,w_n,v_1,\dots,v_n)&\mapsto\left(-\frac{v_1}{v_n},\dots,-\frac{v_{n-1}}{v_n},w_1,\dots,w_{n-1}\right). 
	\end{align*}
	\begin{proposition}[\cite{KnoerrSmoothvaluationsconvex2024} Proposition 6.1]
		\label{propostion:Relation NormalDiffCycle}
		For $K\in\mathcal{K}(\R^n)$, $Q_*(\nc(K)|_{S_-\R^n})=(-1)^nD(h_K(\cdot,-1))$.
	\end{proposition}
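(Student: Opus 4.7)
The plan is to establish the identity first for convex bodies $K$ with $C^2$ boundary and strictly positive Gauss curvature, where both currents are carried by smooth oriented $(n-1)$-submanifolds, and then extend to all of $\mathcal{K}(\R^n)$ by a density/continuity argument. For such smooth $K$, the Gauss map $\nu\colon\partial K\to S^{n-1}$ is a diffeomorphism, so $\nc(K)$ is the oriented integration current over the graph $\{(x,\nu(x)):x\in\partial K\}$, while $h_K(\cdot,-1)$ is a smooth strictly convex function on $\R^{n-1}$ and $D(h_K(\cdot,-1))$ is the oriented integration current over the graph of its gradient.

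To match the supports, fix $v\in S^{n-1}$ with $v_n<0$ and set $x(v):=\nu^{-1}(v)$ and $y(v):=-(v_1,\ldots,v_{n-1})/v_n$. Since the direction $(y(v),-1)$ is a positive multiple of $v$, the point $x(v)$ also maximizes $\langle(y(v),-1),\cdot\rangle$ on $K$, so $h_K(y(v),-1)=\langle x(v),(y(v),-1)\rangle$. The envelope theorem (or a direct differentiation of the support function at a smooth point) then yields $\nabla h_K(y(v),-1)=(x(v)_1,\ldots,x(v)_{n-1})$, hence
\begin{align*}
Q(x(v),v)=(y(v),\nabla h_K(y(v),-1)).
\end{align*}
Thus $Q$ restricts to a diffeomorphism from the carrier of $\nc(K)|_{S_-\R^n}$ onto the carrier of $D(h_K(\cdot,-1))$, and the set-theoretic identity is established.

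The prefactor $(-1)^n$ comes from a careful orientation comparison. In the chart $v'\mapsto(v',-\sqrt{1-|v'|^2})$ on the lower hemisphere, the map $v\mapsto y(v)$ reduces to $v'\mapsto v'/\sqrt{1-|v'|^2}$, an orientation-preserving diffeomorphism onto $\R^{n-1}$. Nevertheless, the orientation on $\nc(K)|_{S_-\R^n}$ inherited from $S^{n-1}$ through the Gauss map and the orientation on $D(h_K(\cdot,-1))$ pulled back through $Q$ from the standard orientation on $\R^{n-1}$ differ by a sign of $(-1)^n$. This sign can be pinned down by pairing both currents against a top form such as the pullback of $dy_1\wedge\dots\wedge dy_{n-1}$, using the expressions of the normal cycle against the translation-invariant generators $\kappa_j^n$ in \eqref{eq:definitonKappaNJ}.

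For the extension to general $K\in\mathcal{K}(\R^n)$, one invokes density of smooth strictly convex bodies in the Hausdorff metric, continuity of $K\mapsto\nc(K)$ in the flat norm on currents, continuity of $K\mapsto h_K(\cdot,-1)$ uniformly on compact subsets of $\R^{n-1}$, and continuity of $h\mapsto D(h)$ on $\Conv(\R^{n-1},\R)$ from \cite{KnoerrSmoothvaluationsconvex2024}. The map $Q$ is smooth on $S_-\R^n$ and is proper on the support of $\nc(K)|_{S_-\R^n}$, since bounded $|y(v)|$ forces $v_n$ to stay bounded away from $0$, on which $\partial K$ is compact; hence the pushforward $Q_*$ is continuous in the flat norm and passage to the limit yields the identity for arbitrary $K$. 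The main obstacle is the orientation computation producing the $(-1)^n$: the set-theoretic matching and the density step are essentially routine, but the precise sign requires carefully tracking three compatible orientations (on $\partial K$ via the Gauss map, on $S^{n-1}_-$, and on $\R^{n-1}$) simultaneously.
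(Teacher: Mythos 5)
The paper itself does not prove this statement; it cites it from \cite{KnoerrSmoothvaluationsconvex2024} and only adds a Remark explaining the sign. So I cannot compare your argument to the paper's proof, but I can assess your argument on its own merits.

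Your overall architecture is the right one: establish the identity on the carrier level for $C^2_+$ bodies via the envelope theorem (the computation $Q(x(v),v)=(y(v),\nabla h_K(y(v),-1))$ is correct), then pass to general $K$ by density, using flat-norm continuity of $K\mapsto\nc(K)$, the continuity of $h\mapsto D(h)$ from \cite{KnoerrSmoothvaluationsconvex2024}, and the properness of $Q$ over bounded regions in the base (the reasoning that bounded $|y(v)|$ forces $|v_n|$ bounded away from $0$ is right). That outline is sound.

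The genuine gap is exactly where you flag it but do not close it: the orientation sign. You assert that the two induced orientations \emph{``differ by a sign of $(-1)^n$''} and say the sign \emph{``can be pinned down by pairing both currents against a top form''}, but no such computation is given, so the claimed value is unsubstantiated. This is not a cosmetic gap. The Remark immediately following Proposition~\ref{propostion:Relation NormalDiffCycle} in the paper states explicitly that \emph{``the additional factor of $(-1)^n$ comes from a different choice of orientation for the normal cycle in \cite{KnoerrSmoothvaluationsconvex2024}''}, i.e.\ the $(-1)^n$ is an artifact of a mismatch between the orientation convention on $\nc(K)$ used there and the one used here. You are treating $(-1)^n$ as an intrinsic geometric quantity that drops out of comparing the Gauss-map orientation with the base orientation on $T^*\R^{n-1}$, but without fixing and matching the precise conventions in both sources, a computation of the kind you sketch could just as well return $+1$ or $(-1)^{n-1}$. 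To close the gap you would need to (i) state which orientation on $\nc(K)$ is being used, (ii) actually carry out the wedge/determinant computation you allude to, and (iii) reconcile the result with the convention in the cited reference. As written, the sign is asserted, not proved.
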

	\begin{remark}
		The additional factor of $(-1)^n$ comes from a different choice of orientation for the normal cycle in \cite{KnoerrSmoothvaluationsconvex2024}. This choice does not play any role in the estimates below.
	\end{remark}
	Recall that we chose coordinates $(w_1,\dots,w_n)$ on $\R^n$ with induced coordinates $(v_1,\dots,v_n)$ on $S^{n-1}$ in Section \ref{section:Prelim_valuationsBodies} to define the differential from $\kappa^n_j$ on $S\R^n$ \eqref{eq:definitonKappaNJ}. On $T^*\R^n\cong\R^n\times \R^n$ we denote the corresponding coordinates by $(x,y)=(x_1,\dots,x_n,y_1,\dots,y_n)$. Consider the following differential forms on $T^*\R^n\cong\R^n\times \R^n$ for $0\le j\le n$:
	\begin{align*}
		\beta=&\sum_{i=1}^{n}x_idy_i,\quad\quad\quad \gamma=\sum_{i=1}^{n}x_idx_i,\\
		\tilde{\kappa}^n_j=&\frac{1}{j!(n-j)!}\sum_{\pi\in S_n}\sign(\pi)dx_{\pi(1)}\dots dx_{\pi(n-j)}dy_{\pi(n-j+1)}\dots dy_{\pi(n)},\\
		\tau^n_j:=&\frac{1}{j!(n-j-1)!}\sum_{\pi\in S_n}\sign(\pi)x_{\pi(1)}dx_{\pi(2)}\dots dx_{\pi(n-j)}dy_{\pi(n-j+1)}\dots dy_{\pi(n)}.
	\end{align*}
	The following result relates integrals with respect to the area measures to these forms (in dimension $n-1$).
\begin{proposition}
	Let $f\in C_c([-1,0))$. Then 
	\begin{align*}
		\int_{S_-^{n-1}}f(v_n) dS_j(K,v)
		=\frac{1}{(n-j)\omega_{n-j}}D(h_K(\cdot,-1))\left[\frac{f\left(-\frac{1}{\sqrt{1+|x|^2}}\right)}{\sqrt{1+|x|^2}^{n-j}} \left[\tilde{\kappa}^{n-1}_j+\beta\wedge \tau^{n-1}_{j-1}\right]\right].
	\end{align*}
\end{proposition}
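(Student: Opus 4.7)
The strategy is to express the integral as a pairing of the normal cycle with a differential form, apply Proposition \ref{propostion:Relation NormalDiffCycle} to transfer the integration to the differential cycle of $h_K(\cdot,-1)$, and verify the desired identity by pulling back the proposed form under $Q$. Since $f$ is supported in a compact subset of $[-1,0)$, the form $f(v_n)\kappa^n_j$ is supported in $S_-\R^n$, so \eqref{equation:DifferentialFormSurfaceArea} yields
$$\int_{S^{n-1}_-} f(v_n)\,dS_j(K,v) \;=\; \frac{1}{(n-j)\omega_{n-j}}\,\nc(K)|_{S_-\R^n}\bigl[f(v_n)\kappa^n_j\bigr].$$
By definition of pushforward and Proposition \ref{propostion:Relation NormalDiffCycle}, this equals $\frac{(-1)^n}{(n-j)\omega_{n-j}}\,D(h_K(\cdot,-1))[\omega]$ for any smooth form $\omega$ on $T^*\R^{n-1}$ such that $Q^*\omega \equiv f(v_n)\kappa^n_j$ modulo the contact form $\alpha=\sum_i w_i\,dv_i$, which vanishes on the normal cycle.

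On $S_-\R^n$ the relation $x_i = -v_i/v_n$ implies $1+|x|^2 = v_n^{-2}$ and $-(1+|x|^2)^{-1/2} = v_n$, so the scalar prefactor of $\omega$ in the statement pulls back to $f(v_n)|v_n|^{n-j}$. Matching both sides thus reduces to the pointwise identity
$$Q^*\bigl(\tilde\kappa^{n-1}_j + \beta\wedge\tau^{n-1}_{j-1}\bigr) \;\equiv\; (-1)^n\,|v_n|^{-(n-j)}\,\kappa^n_j \pmod{\alpha}$$
on $S_-\R^n$. From $x_i=-v_i/v_n$ and $y_i=w_i$ one computes $Q^*dy_i=dw_i$, $Q^*dx_i=-v_n^{-1}dv_i + v_n^{-2}v_i\,dv_n$, and $Q^*\beta=-\sum_{i<n}(v_i/v_n)\,dw_i$. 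Expanding $Q^*\tilde\kappa^{n-1}_j$ via these substitutions and discarding all products containing two or more $dv_n$ factors, the pullback splits into a piece without $dv_n$, which reproduces the summands of $\kappa^n_j$ with $\pi(1)=n$ (coefficient $v_n$), and a piece containing exactly one $dv_n$. The latter, combined with $Q^*(\beta\wedge\tau^{n-1}_{j-1})$ and after absorbing $w_i\,dv_n$-type terms using $\alpha$, accounts for the remaining summands of $\kappa^n_j$ with $\pi(1)<n$.

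The only nontrivial step is this combinatorial identity of differential forms. It is a direct manipulation, but requires careful bookkeeping of the signs coming from the $S_n$-expansions of $\tilde\kappa^{n-1}_j$, $\tau^{n-1}_{j-1}$, and $\kappa^n_j$, of the powers of $v_n^{-1}$ produced by each of the $n-1-j$ factors $Q^*dx_i$, and of the rearrangements modulo $\alpha$ needed to pair the $w_i\,dv_n$-type entries arising from $Q^*\tilde\kappa^{n-1}_j$ with the corresponding $v_i\,dw_i$ contributions coming from $Q^*\beta$. Conceptually, the identity expresses that $\tilde\kappa^{n-1}_j + \beta\wedge\tau^{n-1}_{j-1}$ is the canonical form on $T^*\R^{n-1}$ corresponding to the area-measure form $\kappa^n_j$ under the contact equivalence $Q$, a fact implicit in the valuation theory of convex functions. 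Once the identity is verified, the proposition follows immediately by assembling the chain of equalities above.
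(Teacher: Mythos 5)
Your high-level strategy is the same as the paper's: express the integral as a pairing of the normal cycle with $f(v_n)\kappa^n_j$, push forward to the differential cycle of $h_K(\cdot,-1)$ via Proposition~\ref{propostion:Relation NormalDiffCycle}, and reduce the claim to an identity of differential forms modulo forms vanishing on $\nc(K)$. The chain of current identities, the computation of $Q^*dx_i$, $Q^*dy_i$, $Q^*\beta$, and the pullback of the scalar prefactor are all correct. However, there are two genuine problems.

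First, you take the contact form to be $\alpha=\sum_i w_i\,dv_i$, citing Section~2.4 of the paper, and assert that it vanishes on the normal cycle. This is false: on $\nc(K)$ the Legendrian condition gives $\sum_i v_i\,dw_i=0$, while $\sum_i w_i\,dv_i$ equals $dh_K$ and does not vanish. (The preliminaries contain a typo; the paper's own proof of this proposition uses $\alpha=\sum_i v_i\,dw_i$, which is the correct form.) This is not a cosmetic slip: the form $\kappa^n_j$ contains summands involving $dw_n$, and $Q^*$ of any form on $T^*\R^{n-1}$ never produces $dw_n$, so the two sides of your ``reduced pointwise identity'' differ by terms involving $dw_n$. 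Only $\alpha=\sum_i v_i\,dw_i$ (which contains $v_n\,dw_n$) can absorb these; $\sum_i w_i\,dv_i$ contains no $dw_i$ at all, so the reduced identity modulo that form is simply wrong, and your bookkeeping description (``absorbing $w_i\,dv_n$-type terms'') cannot be made to work. In fact the coefficients of the pulled-back forms involve only $v_i$ and $v_n$, never $w_i$, so $w_i\,dv_n$-type terms cannot arise in the first place.

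Second, the ``combinatorial identity'' is the entire content of the proposition and you do not actually carry it out; it is asserted as ``direct manipulation'' and justified by appeal to a fact said to be ``implicit in the valuation theory of convex functions.'' The paper's argument sidesteps the messy direct $Q^*$-expansion: it first substitutes $dw_n=\frac{1}{v_n}(\alpha-\sum_{i<n}v_i\,dw_i)$ to replace $\kappa^n_j$ by a $\pmod\alpha$-equivalent form $\tilde\omega_j$ without $dw_n$, then pulls back by the section $\tilde Q$ (which satisfies $Q\circ\tilde Q=\mathrm{id}$), and finally uses translation invariance together with $\tilde Q\circ Q(w,v)=(w_1,\dots,w_{n-1},0,v)$ to conclude $Q^*\tilde Q^*(f\tilde\omega_j)=f\tilde\omega_j$. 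This reorganization reduces the bookkeeping to the single manageable pullback computation of $\tilde Q^*\tilde\omega_j$. Your direct $Q^*$ expansion would in principle give the same result once the contact form is corrected, but as written the proof has an incorrect key identity and an unverified central computation.
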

\begin{proof}
	Recall that integrals with respect to the area measures may be expressed in terms of integrals over the normal cycle with respect to the form $\kappa^n_j$ defined in \eqref{eq:definitonKappaNJ}, compare \eqref{equation:DifferentialFormSurfaceArea}. This form satisfies
	\begin{align*}
		\kappa^n_j=&\frac{(-1)^{n-1}v_n}{j!(n-1-j)!}\sum_{\pi\in S_{n-1}}\sign(\pi)dw_{\pi(1)}\dots dw_{\pi(k)}dv_{\pi(k+1)}\dots dv_{\pi(n-1)}\\
		&+\frac{(-1)^{n-2}dw_n}{(j-1)!(n-1-j)!}\sum_{\pi\in S_{n-1}}\sign(\pi)v_{\pi(1)}dw_{\pi(2)}\dots dw_{\pi(j)}dv_{\pi(j+1)}\dots dv_{\pi(n-1)}\\
		&+\frac{(-1)^{n-2}dv_n}{j!(n-2-j)!}\sum_{\pi\in S_{n-1}}\sign(\pi)v_{\pi(1)}dw_{\pi(2)}\dots dw_{\pi(j+1)}dv_{\pi(j+2)}\dots dv_{\pi(n-1)}.
	\end{align*}
	As $\alpha=\sum_{i=1}^{n}v_idw_i$, we have $dw_n=\frac{1}{v_n}(\alpha-\sum_{i=1}^{n-1}v_idw_i)$ on $S^{n-1}_-$. Since $\nc(K)$ vanishes on multiples of $\alpha$ and $f$ is supported on $S^{n-1}_-$, the valuation $K\mapsto \int_{S^{n-1}}f(v_n) dS_j(K,v)$ is therefore given by integrating the product of $\frac{1}{(n-j)\omega_{n-j}}f(v_n)$ and the form 
	\begin{align*}
		\tilde{\omega}^n_j:=&\frac{(-1)^{n-1}v_n}{j!(n-1-j)!}\sum_{\pi\in S_{n-1}}\sign(\pi)dw_{\pi(1)}\dots dw_{\pi(j)}dv_{\pi(j+1)}\dots dv_{\pi(n-1)}\\
		&-\sum_{i=1}^{n-1}\frac{v_i}{v_n}dw_i \frac{(-1)^n}{(j-1)!(n-1-j)!}\sum_{\pi\in S_{n-1}}\sign(\pi)v_{\pi(1)}dw_{\pi(2)}\dots dw_{\pi(j)}dv_{\pi(j+1)}\dots dv_{\pi(n-1)}\\
		&+dv_n \frac{(-1)^n}{j!(n-2-j)!}\sum_{\pi\in S_{n-1}}\sign(\pi)v_{\pi(1)}dw_{\pi(2)}\dots dw_{\pi(j+1)}dv_{\pi(j+2)}\dots dv_{\pi(n-1)}
	\end{align*}
	over the normal cycle $\nc(K)$, compare \eqref{equation:DifferentialFormSurfaceArea}. Consider the map \begin{align*}
		\tilde{Q}:\R^{n-1}\times\R^{n-1}&\rightarrow\R^n\times S^{n-1}\\
		(x,y)&\mapsto \left((y,0),\frac{(x,-1)}{\sqrt{1+|x|^2}}\right).
	\end{align*}
	Then
	\begin{align*}
		\tilde{Q}^*dv_i=&\begin{cases}
			\frac{1}{\sqrt{1+|x|^2}^3}\gamma, & i=n,\\
			\frac{dx_i}{\sqrt{1+|x|^2}}-\frac{x_i}{\sqrt{1+|x|^2}^3}\gamma, & 1\le i\le n-1,
		\end{cases}
	\end{align*}
	and a short calculation shows that 
	\begin{align*}
		\tilde{Q}^*\tilde{\omega}_j
		=&\frac{(-1)^n}{\sqrt{1+|x|^2}^{n-j}}\left(\tilde{\kappa}^{n-1}_j+\beta\wedge \tau^{n-1}_{j-1}\right).
	\end{align*}
	Next, note that $\tilde{Q}\circ Q(w_1,\dots,w_n,v_1,\dots,v_n)=(w_1,\dots,w_{n-1},0,v_1,\dots,v_n)$ for $(w,v)\in S_-\R^n$. As $f\tilde{\omega}_j$ is a translation invariant form that does not contain a multiple of $dw_n$, we deduce \begin{align*}
		f\tilde{\omega}_j=Q^*\tilde{Q}^*(f\tilde{\omega}_j)=Q^*\left(\frac{(-1)^nf\left(-\frac{1}{\sqrt{1+x^2}}\right)}{\sqrt{1+|x|^2}^{n-j}}\left(\tilde{\kappa}^{n-1}_j+\beta\wedge \tau^{n-1}_{j-1}\right)\right).
	\end{align*}
	Thus
	\begin{align*}
		\int_{S^{n-1}}f(v_n) dS_j(K,v)=&\frac{1}{(n-j)\omega_{n-j}}\nc(K)[f \kappa^n_j]=\frac{1}{(n-j)\omega_{n-j}}\nc(K)[f \tilde{\omega}_j]\\
		=&\frac{1}{(n-j)\omega_{n-j}}(-1)^nQ_*D(h_K(\cdot,-1))[f \tilde{\omega}_j]\\
		=&\frac{1}{(n-j)\omega_{n-j}}D(h_K(\cdot,-1))\left[\frac{f\left(-\frac{1}{\sqrt{1+|x|^2}}\right)}{\sqrt{1+|x|^2}^{n-j}} \left[\tilde{\kappa}^{n-1}_j+\beta\wedge \tau^{n-1}_{j-1}\right]\right].
	\end{align*}
\end{proof}
\begin{remark}
	Similar relations between Monge-Amp\`ere-type operators and mixed surface area measures were obtained by Hug, Mussnig, and Ulivelli in \cite{HugEtAlAdditivekinematicformulas2024,HugEtAlKubotatypeformulas2024}.
\end{remark}

	Fix a differential form $\omega\in \Omega^{n-j}(\R^n)\otimes \Lambda^j((\R^n)^*)^*$. Since $D(f)$ is an integral current, we may define for $f\in \Conv(\R^n,\R)$ a signed measure $\Psi(f)$ on $\R^n$ by setting
	\begin{align*}
		\Psi_\omega(f)[U]=D(f)[1_{\pi^{-1}(U)}\omega]\quad U\subset\R^n~\text{bounded Borel set},
	\end{align*}
	where $\pi:T^*\R^n\rightarrow\R^n$ denotes the natural projection, compare the discussion in \cite{KnoerrMongeAmpereoperators2024}.\\
	
	The second ingredient for the proof of Theorem \ref{theorem:estimateIntegrationSj} is the following general estimate for integrals of these measures for homogeneous differential forms on $T^*\R^n$. Here, we call a differential form $\omega\in \Omega^{n-j}(\R^n)\otimes\Lambda^j((\R^n)^*)^*$ homogeneous of degree $n-j+l$ for some $l\ge0$ if $G_t^*\omega=t^{n-j+l}\omega$ for all $t\ge0$, where
	\begin{align*}
		G_t:T^*\R^n&\rightarrow T^*\R^n\\
		(x,y)&\mapsto (tx,y).
	\end{align*}
	Let $|\Psi_\omega(f)|$ denote the variation of the measure $\Psi_\omega(f)$ for $f\in\Conv(\R^n,\R)$. For $a>0$ and a locally integrable function $\zeta:(0,\infty)\rightarrow\R$, we consider the following expression:
	\begin{align*}
		\|\zeta\|_{a}:=\sup_{t>0}t^a|\zeta(t)|+\sup_{t>0} \left|\int_t^\infty\zeta(s)s^{a-1}ds\right|.
	\end{align*}
	\begin{proposition}[\cite{Knoerrgeometricdecompositionunitarily2024} Corollary~4.6]
		\label{proposition:GeneralBoundMA}
		Let $0\le j\le n-1$ and $\omega\in \Omega^{n-j}(\R^n)\otimes\Lambda^j((\R^n)^*)^*$ be homogeneous of degree $n-j+l$, $l\ge 0$. Then there exists a constant $A(\omega)$ such that for every $R>0$, for every Baire function $\phi:[0,\infty)\rightarrow[0,\infty]$ with $\supp\phi\subset[0,R]$, and every $f\in\Conv(\R^n,\R)$,
	\begin{align*}
		\int_{\R^n}\phi(|x|) d|\Psi_\omega(f)|\le A(\omega) \left(\sup_{|x|\le R+1}|f(x)|\right)^j \| \phi\|_{n-j+l}.
	\end{align*}
	\end{proposition}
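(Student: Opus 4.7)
The plan is to reduce to the smooth strictly convex case by continuity of the differential cycle, write $\Psi_\omega(f)$ explicitly in coordinates, and then exploit the homogeneity of $\omega$ through an iterated integration-by-parts argument.

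Using the continuity of $f\mapsto D(f)$ on $\Conv(\R^n,\R)$ from \cite{KnoerrSmoothvaluationsconvex2024}, I would first assume $f$ is smooth and strictly convex, so that $D(f)$ is the oriented graph of $df$. Decomposing $\omega = \sum_{|I|=n-j,\,|J|=j}\omega_{IJ}(x,y)\,dx^I\wedge dy^J$ and pulling back along $x\mapsto(x,\nabla f(x))$ makes $\Psi_\omega(f)$ absolutely continuous, with density $\sum_{I,J}\omega_{IJ}(x,\nabla f(x))\,M_{I^c,J}(D^2 f)$, where $M_{I^c,J}$ is a signed $j\times j$ minor of the Hessian. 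The hypothesis $G_t^*\omega = t^{n-j+l}\omega$ translates into each coefficient $\omega_{IJ}(\cdot,y)$ being positively $l$-homogeneous in $x$.

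The core step is to transfer all $j$ Hessian factors onto the weight $\phi(|x|)$ by $j$ iterations of integration by parts, exploiting the divergence-free structure of the cofactor tensor of $D^2 f$ (each iteration converts one factor of $\partial^2 f$ into a factor of $f$ while redistributing the missing derivatives onto the remaining factors). The $l$-homogeneity of $\omega_{IJ}(\cdot,y)$ allows the emerging radial derivatives to be packaged as the combinations $\phi(r)r^{n-j+l}$ and $\int_t^\infty \phi(s)s^{n-j+l-1}\,ds$ that appear in the definition of $\|\phi\|_{n-j+l}$. After the iteration, the integrand is pointwise bounded by $|f(x)|^j$ times interior terms of order $|x|^{n-j+l-1}|\phi(|x|)|$ and boundary expressions $\phi(r)r^{n-j+l}$ at specific radii. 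On $\supp\phi\subset[0,R]$ convexity yields $|f(x)|\le \sup_{|x|\le R+1}|f|$, supplying the factor $(\sup|f|)^j$, while the remaining radial contributions are dominated by $\|\phi\|_{n-j+l}$.

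The main obstacle I anticipate is making the iterated integration by parts rigorous at the level of currents. One must (i) ensure that boundary contributions at infinity vanish, which I would handle by multiplying by a compactly supported radial cutoff, carrying out all estimates in that smooth setting, and then passing to the limit using the approximation of the first step; (ii) keep track of the combinatorial signs arising from the $j\times j$ minors; and, most delicately, (iii) obtain the bound for the total variation $|\Psi_\omega(f)|$ rather than only for the signed measure $\Psi_\omega(f)$, which forces a preliminary splitting of $\omega$ into components of definite sign so that positivity is preserved throughout the iteration. The resulting constant $A(\omega)$ should depend only on finitely many $C^l$-seminorms of the coefficients of $\omega$.
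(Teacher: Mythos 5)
The paper does not prove this statement---it is imported verbatim from \cite{Knoerrgeometricdecompositionunitarily2024} as a citation, so there is no ``paper's own proof'' to compare against. I will therefore assess your proposal on its own terms.

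Your outline of reducing to smooth strictly convex $f$ and writing the density of $\Psi_\omega(f)$ as a sum $\sum_{I,J}\omega_{IJ}\,M_{I^c,J}(D^2f)$ of $j\times j$ minors is a reasonable start, as is reading the homogeneity assumption off the coefficients. (Minor point: by the hypothesis $\omega\in\Omega^{n-j}(\R^n)\otimes\Lambda^j((\R^n)^*)^*$ the coefficients $\omega_{IJ}$ depend on $x$ only, not on $y=\nabla f(x)$; this is consistent with the examples $\tilde\kappa^n_j,\tau^n_j$ that appear in Section~\ref{section:SingularValuationsEstimates}.)

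The genuine gap is exactly the point you flag as (iii), and your proposed fix does not repair it. Splitting $\omega$ into pieces where the coefficients $\omega_{IJ}$ have definite sign cannot control the \emph{total variation} $|\Psi_\omega(f)|$, because the off-diagonal minors $M_{I^c,J}(D^2f)$ with $I^c\neq J$ have no definite sign even for $D^2f\geq 0$; the integrand still oscillates after your splitting, and the iterated integration by parts only produces a bound on the signed quantity $\int\phi\,d\Psi_\omega(f)$. Moreover, the statement is for arbitrary non-negative Baire $\phi$, whereas an integration-by-parts scheme requires at least Lipschitz regularity of $\phi$, so a separate monotone-approximation step would still be needed at the end. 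The natural way to close the gap is not more integration by parts but a pointwise domination: for a positive semi-definite matrix $A$ one has $|M_{I,J}(A)|^2\leq M_{I,I}(A)M_{J,J}(A)$ and hence $|M_{I^c,J}(D^2f)|\leq e_j(D^2f)$, the $j$-th elementary symmetric function of the Hessian eigenvalues, which (up to a constant) is the density of the non-negative measure $\Hess_j(f)=$ const.$\cdot\,\Psi_{\tilde\kappa^n_j}(f)$. Combined with $|\omega_{IJ}(x)|\leq C(\omega)|x|^l$ from $l$-homogeneity, this gives the measure inequality $|\Psi_\omega(f)|\leq A'(\omega)\,|x|^l\,\Hess_j(f)$, after which one applies Proposition~\ref{prop:inequFuncIntVol} to $\zeta(t)=\phi(t)t^l$, checks $\|\zeta\|_{n-j}=\|\phi\|_{n-j+l}$ directly from the definition of $\|\cdot\|_a$, and finally extends from continuous to Baire $\phi\geq 0$ by monotone convergence using the non-negativity of $\Hess_j(f)$. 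This domination argument both produces the total variation bound immediately and avoids the delicate boundary/sign bookkeeping that your $j$-fold integration by parts would require; I would expect it is essentially what the cited source does, building on the specialized estimate already available for $\tilde\kappa^n_j$.
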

	Note that Proposition \ref{proposition:GeneralBoundMA} only applies to non-negative functions. We require the following stronger estimate for $\Psi_{\tilde{\kappa}^n_j}$.
	
	\begin{proposition}[\cite{KnoerrSingularvaluationsHadwiger2022} Proposition 6.6]
		\label{prop:inequFuncIntVol}
		Let $1\le j\le n-1$. For every $\zeta\in C_c([0,\infty))$ with $\supp\zeta\subset [0,R]$ for $R>0$ and every $f\in \Conv(\R^n,\R)$
		\begin{align*}
			\left|\int_{\R^n} \zeta(|x|)d\Psi_{\tilde{\kappa}^n_j}(f)\right|\le 2^{3j+1}\omega_n\binom{n}{j}\left(\sup_{|x|\le R+1}|f(x)|\right)^j\|\zeta\|_{n-j}.
		\end{align*}
	\end{proposition}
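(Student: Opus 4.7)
The plan is to reduce to the case $f\in C^2\cap\Conv(\R^n,\R)$ where $D(f)$ is the oriented graph of $df$, to apply Stokes' theorem to this closed current via the algebraic fact that $\tilde\kappa^n_j$ is exact on $T^*\R^n$ (specifically $d\tau^n_j=(n-j)\tilde\kappa^n_j$), and finally to rearrange the resulting identity by a radial integration by parts so that the two summands of $\|\zeta\|_{n-j}$ appear naturally in the right places. By inf-convolution smoothing and the continuity of the differential cycle on $\Conv(\R^n,\R)$ established in \cite{KnoerrSmoothvaluationsconvex2024}, the reduction to $C^2$-convex $f$ is harmless; for such $f$, $D(f)$ has no boundary in its support, so $D(f)[d\omega]=0$ for every compactly supported smooth form $\omega$ on $T^*\R^n$.

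With $R=\sum_i x_i\partial_{x_i}$ the radial vector field on $\R^n$, a direct computation from the definitions gives $i_R\tilde\kappa^n_j=\tau^n_j$, and since $\tilde\kappa^n_j$ is homogeneous of degree $n-j$ in $x$ one has $L_R\tilde\kappa^n_j=(n-j)\tilde\kappa^n_j$. Cartan's magic formula $L_R=i_Rd+di_R$ and $d\tilde\kappa^n_j=0$ together yield the exactness identity $d\tau^n_j=(n-j)\tilde\kappa^n_j$. For $\zeta\in C^1_c([0,\infty))$ supported in $[0,R]$ and a radial cut-off $\chi_\delta$ vanishing on a neighbourhood of the origin, Stokes' theorem applied to the compactly supported form $\chi_\delta\zeta(|x|)\tau^n_j$ on $D(f)$, followed by the limit $\delta\to 0$ (justified by Proposition \ref{proposition:GeneralBoundMA} applied to indicators of small balls, which shows that no mass of $\Psi_{\tilde\kappa^n_j}(f)$ accumulates at the origin for $j\le n-1$), produces the identity
\begin{align*}
    (n-j)\int_{\R^n}\zeta(|x|)\, d\Psi_{\tilde\kappa^n_j}(f)=-\int_{\R^n}\zeta'(|x|)\, d\Psi_{d|x|\wedge\tau^n_j}(f).
\end{align*}
Writing $\nu(r):=\Psi_{d|x|\wedge\tau^n_j}(f)(B_r)$ (which is signed in general) and using the general bound $|\nu(r)|\le A\,M^j r^{n-j+1}$ from Proposition \ref{proposition:GeneralBoundMA} with $M=\sup_{|x|\le R+1}|f(x)|$, a Stieltjes integration by parts on the right-hand side followed by one further radial integration by parts against the weight $r^{n-j}$ re-expresses the result as a boundary contribution bounded by $\sup_r r^{n-j}|\zeta(r)|$ plus a Lebesgue integral proportional to $\int_0^\infty \zeta(s)s^{n-j-1}\,ds$, controlled by $\sup_t\bigl|\int_t^\infty\zeta(s)s^{n-j-1}\,ds\bigr|$ as $t\to 0$. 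The passage from $C^1_c$ to continuous $\zeta$ is by density, since both sides depend continuously on $\zeta$ in $\|\cdot\|_{n-j}$.

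The main obstacle I anticipate is the regularity mismatch: $\zeta\in C_c([0,\infty))$ need not be differentiable, so the appearance of $\zeta'$ after Stokes must be justified through the Stieltjes framework and a careful density argument. More subtly, the rearrangement producing the two summands of $\|\zeta\|_{n-j}$ must \emph{not} go through $\sup_t t^{n-j+1}|\zeta'(t)|$ (which is not controlled by $\|\zeta\|_{n-j}$, as shown by oscillatory examples); the integration by parts has to be organised so that the derivative $\zeta'$ is paired with the growth $r^{n-j+1}$ of $\nu(r)$ in a way that generates the boundary term $\zeta(t)t^{n-j}$ and the integral $\int\zeta(s)s^{n-j-1}\,ds$ directly, exploiting the cancellation already present in $\|\zeta\|_{n-j}$. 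Tracking the signs and combinatorial constants through this balancing act is the delicate core of the argument and accounts for the explicit factor $2^{3j+1}\omega_n\binom{n}{j}$ in the stated bound.
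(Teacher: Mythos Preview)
The paper does not prove this proposition; it is quoted verbatim from \cite{KnoerrSingularvaluationsHadwiger2022} and used as a black box. So there is no ``paper's own proof'' to compare against here, and your proposal is effectively an attempt to reconstruct the argument from that external reference.

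Your strategy via the exactness identity $d\tau^n_j=(n-j)\tilde\kappa^n_j$ and Stokes' theorem is a natural opening move, and the identity itself is correct. However, the part you flag as ``the delicate core'' is a genuine gap, not just a bookkeeping exercise. After Stokes you arrive at $-\int\zeta'(|x|)\,d\Psi_{d|x|\wedge\tau^n_j}(f)$, and you propose to bound the distribution function $\nu(r)=\Psi_{d|x|\wedge\tau^n_j}(f)(B_r)$ by $AM^j r^{n-j+1}$ via Proposition~\ref{proposition:GeneralBoundMA}. But any integration-by-parts rearrangement of $\int_0^R\zeta'(r)\,d\nu(r)$ that uses only this growth bound on $|\nu|$ will, at some point, need control of $\sup_r r^{n-j+1}|\zeta'(r)|$ or of $\int|\zeta'(r)|r^{n-j}\,dr$, neither of which is dominated by $\|\zeta\|_{n-j}$; you correctly note this but do not show how to avoid it. The cancellation encoded in $\|\zeta\|_{n-j}$ (as opposed to $\||\zeta|\|_{n-j}$) does not emerge from a generic Stieltjes manipulation against a signed measure with polynomial growth.

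What your sketch misses is the structural fact the paper mentions immediately after the statement: $\Psi_{\tilde\kappa^n_j}(f)$ is a constant multiple of the $j$th Hessian measure of $f$, which is \emph{non-negative} for convex $f$. The proof in \cite{KnoerrSingularvaluationsHadwiger2022} exploits this positivity together with a geometric comparison argument (inherited from \cite{ColesantiEtAlHadwigertheoremconvex2020}, as noted in the introduction) that bounds the Hessian measure of a ball directly in terms of cone-type reference functions. Positivity is what allows one to pass from bounds on $\mu(B_r)$ to bounds on $\int\zeta\,d\mu$ for signed $\zeta$ without ever differentiating $\zeta$; your Stokes route discards this positivity by passing to the auxiliary form $d|x|\wedge\tau^n_j$, whose associated measure need not be signed in a useful way. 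If you want to salvage your approach, you should work with the Hessian measure itself and its monotonicity/comparison properties rather than transferring the problem to $\tau^n_j$.
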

	Let us remark that this measure is multiple of the $j$th Hessian measure, compare \cite[Section 5.1]{KnoerrSingularvaluationsHadwiger2022}.
	\begin{proof}[Proof of Theorem \ref{theorem:estimateIntegrationSj}]
		Fix $\phi\in C([-1,1])$ with $0\le \phi\le 1$, $\phi=1$ on $[-1,-\frac{3}{4}]$, $\phi=0$ on $[-\frac{1}{2},1]$. Set $\phi_+=\phi$, $\phi_-=\phi(-\cdot)$, $\phi_0=1-\phi_+-\phi_-$. Then for $f\in C([-1,1])$, $K\in\mathcal{K}(\R^n)$,
		\begin{align*}
			\int_{S^{n-1}}f(v_n) dS_j(K,v)=&\int_{S^{n-1}}\phi_+(v_n)f(v_n) dS_j(K,v)+\int_{S^{n-1}}\phi_0(v_n)f(y_n) dS_j(K,v)\\
			&+\int_{S^{n-1}}\phi_-(v_n)f(v_n) dS_j(K,v).
		\end{align*} 
		The second term can be estimated by
		\begin{align}
			\notag
			\left|\int_{S^{n-1}}\phi_0(v_n)f(v_n) dS_j(K,v)\right|\le &\sup_{|s|\le \frac{1}{4}}|f(s)|\cdot V_j(K)\\
			\notag
			\le& \sup_{|s|\le \frac{3}{4}}|(1-s^2)^{\frac{n-j-1}{2}}f(s)|\cdot \frac{1}{(1-\frac{9}{16})^{\frac{n-j-1}{2}}}\frac{\diam(K)^j}{2^jn}V_j(B_1(0))\\
			\label{eq:estimateEquatorTerm}
			\le& C_1\|f\|_{D^{\frac{n-j-1}{2}}}\diam(K)^j.
		\end{align}
		Here, $V_j(K)=nS_j(K,S^{n-1})$ denotes the $j$th intrinsic volume, and we used that $V_j$ is $j$-homogeneous and monotone.
		The first and last term can be treated similarly, so will only consider the last term. Using Proposition \ref{propostion:Relation NormalDiffCycle}, we obtain
		\begin{align*}
			(n-j)\omega_{n-j}\int_{S^{n-1}}\phi_-(v_n)f(v_n) dS_j(K,v)=D(h_K(\cdot,-1))[\tilde{f}\tilde{\kappa}^{n-1}_j]+D(h_K(\cdot,-1))[\tilde{f}\beta\wedge\tau^{n-1}_{j-1}],
		\end{align*}
		where $\tilde{f}(t)=\frac{1}{\sqrt{1+t^2}^{n-j}}\left(\phi_-\cdot f\right)\left(-\frac{1}{\sqrt{1+t^2}}\right)$. Note that $\tilde{f}$ is supported on $[0,R]$ for $R=\frac{\sqrt{7}}{3}$. Moreover, $\beta\wedge\tau^{n-1}_{j-1}$ is homogeneous of degree $n-1-j+2$.
		We apply Proposition \ref{prop:inequFuncIntVol} to the first term and Proposition \ref{proposition:GeneralBoundMA} to the second. We obtain a constant $C>0$ such that
		\begin{align}
				\label{eq:EstimateSMinus}
				\left|\int_{S^{n-1}}\phi_-(v_n)f(v_n) dS_j(K,v)\right|\le C\left(\sup_{|x|\le R+1}|h_{K}(x,-1)|\right)^j\left(\|\tilde{f}\|_{n-1-j}+\| |\tilde{f}|\|_{n-j+1}\right).
		\end{align}
		Notice that the second term involves the semi-norm $\|\cdot\|_{n-j+1}$ for the absolute value $|\tilde{f}|$. Since $|\tilde{f}|$ is supported on $[0,R]$, we obtain
		\begin{align*}
			\||\tilde{f}|\|_{n-j+1}=&\sup_{t>0}\left|\int_t^\infty |\tilde{f}(s)|s^{n-j}ds\right|+\sup_{t>0} t^{n-j+1}|\tilde{f}(t)|\\
			\le&\int_t^R sds\cdot\sup_{t>0}t^{n-j-1}|\tilde{f}(t)|+R^2\sup_{t>0}\left|t^{n-j-1}\tilde{f}(t)\right|\\
			\le& C_R\|\tilde{f}\|_{n-1-j}
		\end{align*}
		for some constant $C_R>0$ independent of $\tilde{f}$. In particular, \eqref{eq:EstimateSMinus} is bounded by a multiple of $\|\tilde{f}\|_{n-1-j}$. We will estimate
		\begin{align*}
			\|\tilde{f}\|_{n-1-j}=& \sup_{t>0}\left|\int_t^\infty \frac{[\phi_-\cdot f](-\frac{1}{\sqrt{1+s^2}})}{\sqrt{1+s^2}^{n-j}}s^{n-2-j}ds\right|+\sup_{t>0}\left|s^{n-1-j}\frac{[\phi_-\cdot f](-\frac{1}{\sqrt{1+s^2}})}{\sqrt{1+s^2}^{n-j}}\right|.
		\end{align*}
		by a constant multiple of $\|f\|_{D^{\frac{n-j-1}{2}}}$. Using the substitution $t=-\frac{1}{\sqrt{1+s^2}}$ for $s>0$ (or equivalently $s=-\frac{\sqrt{1-t^2}}{t}$), the second term can be estimated by
		\begin{align*}
			\sup_{t\in(-1,0]}\left|\sqrt{1-t^2}^{n-1-j}t\phi_-(t)f(t)\cdot\right|\le \sup_{t\in(-1,0]}\left|\sqrt{1-t^2}^{n-1-j}f(t)\right|\le \|f\|_{D^{\frac{n-j-1}{2}}},
		\end{align*}
		whereas the first term is bounded by
		\begin{align*}
		\sup_{t\in(-1,0]}\left|\int_0^t \phi_-(x)f(x)(-x)^{n-j}\left(\frac{\sqrt{1-x^2}}{x}\right)^{n-3-j}\frac{2}{x^3}dx\right|
		= &2\sup_{t\in(-1,0]}\left|\int_0^t \phi_-(x)f(x)(1-x^2)^{\frac{n-j-3}{2}}dx\right|.
		\end{align*}
		Note that $\phi_-$ is supported on $[-1,-\frac{1}{2}]$ and is equal to $1$ on $[-1,-\frac{3}{4}]$, so
		\begin{align*}
			&\sup_{t\in(-1,0]}\left|\int_0^t \phi_-(x)f(x)(1-x^2)^{\frac{n-j-3}{2}}dx\right|\\
			\le&\sup_{t\in[-\frac{3}{4},-\frac{1}{2}]}\left|\int_0^t \phi_-(x)f(x)(1-x^2)^{\frac{n-j-3}{2}}dx\right|+2\sup_{t\in(-1,-\frac{3}{4}]}\left|\int_{0}^t f(x)(1-x^2)^{\frac{n-j-3}{2}}dx\right|\\
			\le& \int_{-\frac{3}{4}}^{-\frac{1}{2}}\phi_-(s)(1-x^2)^{-1}dx\cdot\sup_{t\in(-1,1)} (1-s^2)^{\frac{n-j-1}{2}}|f(s)|+2\sup_{t\in(-1,1)}\left|\int_0^tf(x)(1-x^2)^{\frac{n-j-3}{2}}dx\right|\\
			\le& C' \|f\|_{D^{\frac{n-j-1}{2}}}
		\end{align*}
		for a constant $C'>0$ depending only on $\phi_-$. Thus $\|\tilde{f}\|_{n-1-j}$ is bounded by a constant multiple of $\|f\|_{D^{\frac{n-j-1}{2}}}$. In particular, \eqref{eq:EstimateSMinus} provides the estimate
		\begin{align*}
			\left|\int_{S^{n-1}}\phi_-(y_n)f(y_n) dS_j(K,y)\right|\le C''\left(\sup_{|x|\le R+1}|h_{K}(x,-1)|\right)^j\|f\|_{D^{\frac{n-j-1}{2}}}
		\end{align*}
		for some constant $C''>0$ independent of $f$. Next, note that
		\begin{align*}
			\sup_{|x|\le R+1} |h_K(x,-1)|=\sup_{|x|\le R+1} \sqrt{1+|x|^2}\left|h_K\left(\frac{(x,-1)}{\sqrt{1+|x|^2}}\right)\right|\le \sqrt{1+(R+1)^2}\|h_K\|_{\infty}.
		\end{align*}
		Combining these estimates with \eqref{eq:estimateEquatorTerm}, we find a constant $C_{n,j}$ such that
		\begin{align*}
			&\left|\int_{S^{n-1}}f(y_n)dS_j(K,y)\right|\le C_{n,j}\|h_K\|_\infty^j \|f\|_{D^{\frac{n-j-1}{2}}}
		\end{align*}
		for all $K\in\mathcal{K}(\R^n)$. The claim follows.	
	\end{proof}

\subsection{Principal value integrals and proof of Theorem \ref{maintheorem:SingularValuations}}
	In this section we prove Theorem \ref{maintheorem:SingularValuations}. We first show in Theorem \ref{theorem:ExtensionPhi} that the map
		\begin{align}
			\label{eq:formulaPhiContinuousFunction}
		\begin{split}
			C([-1,1])&\rightarrow\Val_j(\R^n)^{\SO(n-1)}\\
			f&\mapsto \left[K\mapsto \int_{S^{n-1}}f(u_n)dS_j(K,u)\right]
		\end{split}
	\end{align}
	extends by continuity to $D^{\frac{n-j-1}{2}}$  and then verify that these extensions admit the desired principal value representation in Theorem \ref{theorem:PrincipalValue}.
	\begin{theorem}
		\label{theorem:ExtensionPhi}
		Let $1\le j\le n-2$. The map in \eqref{eq:formulaPhiContinuousFunction} extends uniquely to a continuous map
		\begin{align*}
			\Phi_j:D^{\frac{n-j-1}{2}}\rightarrow\Val_j(\R^n)^{\SO(n-1)}.
		\end{align*}
		with 
		\begin{align*}
			\|\Phi_j(f)\|\le C_{n,j}\|f\|_{D^{\frac{n-j-1}{2}}}\quad \text{for}~f\in D^{\frac{n-j-1}{2}}.
		\end{align*}
	\end{theorem}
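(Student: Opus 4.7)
The plan is to combine the quantitative estimate of Theorem \ref{theorem:estimateIntegrationSj} with the density statement of Lemma \ref{lemma:ContFctDenseDj} via the standard bounded linear extension principle for Banach spaces.

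First I would check that for $f\in C([-1,1])$ the formula \eqref{eq:formulaPhiContinuousFunction} really defines an element of $\Val_j(\R^n)^{\SO(n-1)}$. Continuity of $K\mapsto \int_{S^{n-1}}f(u_n)dS_j(K,u)$ follows from the weak continuity of the area measure $S_j$ on $\mathcal{K}(\R^n)$; translation invariance and the valuation property come from the corresponding properties of $S_j$; $j$-homogeneity follows from $S_j(tK)=t^jS_j(K)$; and $\SO(n-1)$-invariance follows because the integrand $u\mapsto f(u_n)$ is $\SO(n-1)$-invariant. Thus \eqref{eq:formulaPhiContinuousFunction} already defines a linear map $\Phi_j^\circ: C([-1,1])\to \Val_j(\R^n)^{\SO(n-1)}$.

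Next I would combine the two main inputs. On the one hand, evaluating Theorem \ref{theorem:estimateIntegrationSj} on bodies $K\subset B_1(0)$ (where $\|h_K\|_\infty\le 1$) gives
\begin{align*}
	\|\Phi_j^\circ(f)\|=\sup_{K\subset B_1(0)}|\Phi_j^\circ(f)[K]|\le C_{n,j}\|f\|_{D^{\frac{n-j-1}{2}}}
\end{align*}
for every $f\in C([-1,1])$. On the other hand, $C([-1,1])$ is contained in $D^{\frac{n-j-1}{2}}$ (the two conditions in Definition \ref{def:Dj} are trivially satisfied since $f$ is bounded on $[-1,1]$ and $\frac{n-j-1}{2}>0$ for $1\le j\le n-2$), and Lemma \ref{lemma:ContFctDenseDj} shows that the subspace $C([-1,1])$ is dense in $D^{\frac{n-j-1}{2}}$, because the truncations $f^r\to f$ in $D^{\frac{n-j-1}{2}}$ as $r\to 1$ for every $f\in D^{\frac{n-j-1}{2}}$.

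The target space $\Val_j(\R^n)^{\SO(n-1)}$ is a closed subspace of the Banach space $\Val(\R^n)$ (it is the intersection of the closed $j$-homogeneous component $\Val_j(\R^n)$ with the closed $\SO(n-1)$-fixed subspace), hence itself a Banach space. Since $D^{\frac{n-j-1}{2}}$ is complete by Lemma \ref{lemma:DaBanach}, the bounded linear map $\Phi_j^\circ$ defined on the dense subspace $C([-1,1])$ extends uniquely by continuity to a bounded linear map $\Phi_j: D^{\frac{n-j-1}{2}}\to \Val_j(\R^n)^{\SO(n-1)}$, and the norm estimate $\|\Phi_j(f)\|\le C_{n,j}\|f\|_{D^{\frac{n-j-1}{2}}}$ passes to the extension. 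There is essentially no hard step here; the only point requiring a small care is verifying that $C([-1,1])$ really does embed continuously into $D^{\frac{n-j-1}{2}}$ and that density holds with respect to the $D^{\frac{n-j-1}{2}}$-norm (rather than the weaker sup-norm), which is precisely what Lemma \ref{lemma:ContFctDenseDj} provides.
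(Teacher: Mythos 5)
Your proof is correct and follows essentially the same route as the paper's: apply the estimate of Theorem \ref{theorem:estimateIntegrationSj} to bodies contained in $B_1(0)$ to bound $\|\Phi_j(f)\|$ by $C_{n,j}\|f\|_{D^{\frac{n-j-1}{2}}}$ for $f\in C([-1,1])$, then extend by density (Lemma \ref{lemma:ContFctDenseDj}) and completeness of the target Banach space. The added preliminary checks (that the integral formula lands in $\Val_j(\R^n)^{\SO(n-1)}$ and that $C([-1,1])$ embeds in $D^{\frac{n-j-1}{2}}$) are sensible but elementary and are taken for granted in the paper.
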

	\begin{proof}
		For simplicity, we denote the map in \eqref{eq:formulaPhiContinuousFunction} by $\Phi_j$ as well. 
		By Theorem \ref{theorem:estimateIntegrationSj}, there is a constant $C_{n,j}>0$ such that the estimate
		\begin{align*}
				|\Phi_j(f)[K]|=\left|\int_{S^{n-1}}f(y_n)dS_j(K,y)\right|\le C_{n,j}\|h_K\|_\infty^j\|f\|_{D^{\frac{n-j-1}{2}}}
		\end{align*}
		holds for all $f\in C([-1,1])$ and $K\in\mathcal{K}(\R^n)$. In particular, for every $f\in C([-1,1])$
		\begin{align*}
			\|\Phi_j(f)\|=\sup_{K\subset B_1(0)}|\Phi_j(f)[K]|\le C_{n,j}\|f\|_{D^{\frac{n-j-1}{2}}}.
		\end{align*}
		Thus, the map 
		\begin{align*}
			\Phi_j:C([-1,1])\rightarrow \Val_j(\R^n)^{\SO(n-1)}
		\end{align*}
		is continuous with respect to the norm $\|\cdot\|_{D^{\frac{n-j-1}{2}}}$. As $C([-1,1])$ is dense in $D^{\frac{n-j-1}{2}}$ by Lemma \ref{lemma:ContFctDenseDj} and $\Val_j(\R^n)^{\SO(n-1)}$ is a Banach space, $\Phi_j$ thus extends uniquely by continuity to a continuous map $\Phi_j:D^{\frac{n-j-1}{2}}\rightarrow \Val_j(\R^n)^{\SO(n-1)}$ with 
		\begin{align*}
			\|\Phi_j(f)\|\le C_{n,j}\|f\|_{D^{\frac{n-j-1}{2}}}\quad \text{for}~f\in D^{\frac{n-j-1}{2}}.
		\end{align*}
	\end{proof}

	The following result completes the proof of Theorem \ref{maintheorem:SingularValuations}.
		\begin{theorem}
			\label{theorem:PrincipalValue}
			For $f\in D^{\frac{n-j-1}{2}}$, $K\in  \mathcal{K}(\R^n)$,
			\begin{align*}
				\Phi_j(f)[K]=\lim\limits_{r\rightarrow1}\int_{\{v\in S^{n-1}:|v_n|\le r\}}f(v_n)dS_j(K,v).
			\end{align*}
			Moreover, the convergence is uniform on compact subsets in $\mathcal{K}(\R^n)$.
		\end{theorem}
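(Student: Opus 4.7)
The plan is to exploit the truncation operator $f\mapsto f^r$ from Lemma~\ref{lemma:ContFctDenseDj}. Recall that $f^r\in C([-1,1])$ coincides with $f$ on $[-r,r]$ and is constantly equal to $f(\pm r)$ on $[\pm r,\pm1]$, and that $f^r\to f$ in $D^{\frac{n-j-1}{2}}$ as $r\to1$. Since $f^r$ is continuous, the definition of $\Phi_j$ on $C([-1,1])$ gives
\begin{align*}
	\Phi_j(f^r)[K]=\int_{\{|v_n|\le r\}}f(v_n)\,dS_j(K,v)+\int_{\{|v_n|> r\}}f^r(v_n)\,dS_j(K,v),
\end{align*}
where we used that $f^r=f$ on $\{|v_n|\le r\}$. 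Subtracting from $\Phi_j(f)[K]$ we obtain the decomposition
\begin{align*}
	\Phi_j(f)[K]-\int_{\{|v_n|\le r\}}f(v_n)\,dS_j(K,v)=\bigl[\Phi_j(f)[K]-\Phi_j(f^r)[K]\bigr]-\int_{\{|v_n|>r\}}f^r(v_n)\,dS_j(K,v).
\end{align*}

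The first bracket is controlled by the continuity estimate in Theorem~\ref{theorem:ExtensionPhi}: for $K$ contained in a ball of radius $R$ (after translation, which does not affect any of the quantities involved),
\begin{align*}
	|\Phi_j(f)[K]-\Phi_j(f^r)[K]|\le R^j\,\|\Phi_j(f-f^r)\|\le C_{n,j}R^j\|f-f^r\|_{D^{\frac{n-j-1}{2}}},
\end{align*}
and this tends to $0$ by Lemma~\ref{lemma:ContFctDenseDj}. For the boundary term, note that $|f^r|\le\max(|f(r)|,|f(-r)|)$ on $\{|v_n|>r\}$ and this set is the union of the two spherical caps $C_r(\pm e_n)$. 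Applying Firey's estimate (Theorem~\ref{theorem:FireyVolumeSphericalCap}) to both caps gives
\begin{align*}
	\left|\int_{\{|v_n|>r\}}f^r(v_n)\,dS_j(K,v)\right|\le 2A_{n,j}(\diam K)^j\max\bigl(|f(r)|,|f(-r)|\bigr)(1-r^2)^{\frac{n-j-1}{2}},
\end{align*}
which vanishes as $r\to1$ precisely because condition~(1) in the definition of $D^{\frac{n-j-1}{2}}$ forces $(1-s^2)^{\frac{n-j-1}{2}}f(s)\to0$ as $s\to\pm1$.

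Both estimates depend on $K$ only through $\diam K$ (equivalently, the circumradius after translation), so for any compact family $\mathcal{F}\subset\mathcal{K}(\R^n)$ the bounds are uniform in $K\in\mathcal{F}$. This yields both the existence of the limit and the uniform convergence on compact subsets, proving the theorem. There is no real obstacle here: the truncation $f^r$ is tailored so that the boundary contribution is explicitly bounded by the two quantities controlled by the norm $\|\cdot\|_{D^{\frac{n-j-1}{2}}}$, and Firey's inequality is exactly strong enough to absorb the factor $(1-r^2)^{-\frac{n-j-1}{2}}$ created by the decay condition on $f$.
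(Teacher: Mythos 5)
Your proof is correct and follows essentially the same route as the paper: same truncation $f^r$, same two-term decomposition, same use of the continuity of $\Phi_j$ from Theorem~\ref{theorem:ExtensionPhi} for the first term, and the same application of Firey's cap estimate (Theorem~\ref{theorem:FireyVolumeSphericalCap}) together with condition~(1) of Definition~\ref{def:Dj} for the boundary term. The only cosmetic difference is that you bound the boundary term by $2A_{n,j}(\diam K)^j\max(|f(r)|,|f(-r)|)(1-r^2)^{\frac{n-j-1}{2}}$ whereas the paper writes $A_{n,j}(\diam K)^j(|f(r)|+|f(-r)|)(1-r^2)^{\frac{n-j-1}{2}}$; both are obtained by applying Firey to the two caps.
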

		\begin{proof}
			For $r\in (0,1)$ let $f^r$ denote the function defined in Lemma \ref{lemma:ContFctDenseDj}. Then
			\begin{align*}
				&\left|\Phi_j(f)[K]-\int_{\{v\in S^{n-1}:|v_n|\le r\}}f(v_n)dS_j(K,v)\right|\\
				\le &|\Phi_j(f)[K]-\Phi_j(f^r)[K]|+\left|\Phi_j(f^r)[K]-\int_{\{v\in S^{n-1}:|v_n|\le r\}}f^r(v_n)dS_j(K,v)\right|.
			\end{align*}
		The first term converges uniformly to $0$ on compact subsets of $\mathcal{K}(\R^n)$ for $r\rightarrow1$ due to the fact that $\Phi_j$ is continuous and that $f^r$ converges to $f$ in $D^{\frac{n-j-1}{2}}$ by Lemma \ref{lemma:ContFctDenseDj}. For the second term, note that since $f^r$ is a continuous function, $\Phi_j(f^r)$ is given by \eqref{eq:formulaPhiContinuousFunction}, so we have
		\begin{align*}
			\left|\Phi_j(f^r)[K]-\int_{\{v\in S^{n-1}:|v_n|\le r\}}f^r(v_n)dS_j(K,v)\right|=&\left|\int_{\{v\in S^{n-1}:|v_n|> r\}}f^r(v_n)dS_j(K,v)\right|\\
			\le&\left(|f(r)|+|f(-r)|\right)\int_{\{v\in S^{n-1}:v_n> r\}}dS_j(K,v)\\
			\le&\left(|f(r)|+|f(-r)|\right)A_{n,j}\diam(K)^j(1-r^2)^{\frac{n-j-1}{2}},
		\end{align*} where we have used Firey's Theorem \ref{theorem:FireyVolumeSphericalCap} in the last step. Since $f\in D^{\frac{n-j-1}{2}}$, this term converges uniformly to $0$ on every bounded subset of $\mathcal{K}(\R^n)$. The claim follows.
 		\end{proof}
 
 		The representation in Theorem \ref{theorem:PrincipalValue} naturally leads to the question for which of functions $f$ and bodies $K\in\mathcal{K}(\R^n)$ the representation is given by a proper integral. The following result shows that there is no restriction except for integrability.
 		\begin{corollary}
 			\label{corollary:RepresentationPhiIntegrableCase}
 		Let $K\in\mathcal{K}(\R^n)$, $f\in D^{\frac{n-j-1}{2}}$. If $v\mapsto f(v_n)$ is integrable with respect to $S_j(K)$, then
 		\begin{align*}
 			\Phi_j(f)[K]=\int_{S^{n-1}}f(v_n)dS_j(K,v).
 		\end{align*}
 			This is in particular the case if $h_K$ is of class $C^2$ in a neighborhood of $\pm e_n$.
 	\end{corollary}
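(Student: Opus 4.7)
My plan is to handle the two assertions in turn. The first is essentially automatic from Theorem~\ref{theorem:PrincipalValue}: that theorem represents $\Phi_j(f)[K]$ as the limit of the truncated integrals over $\{v \in S^{n-1}:|v_n|\le r\}$ as $r \to 1$, and once $v \mapsto f(v_n)$ is assumed to lie in $L^1(S_j(K))$, the truncated integrands are pointwise dominated by $|f(v_n)|$ and converge pointwise to $f(v_n)$. Lebesgue's dominated convergence theorem then yields the stated identity.

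For the second assertion, I need to check that $C^2$-regularity of $h_K$ near $\pm e_n$ implies that $v\mapsto f(v_n)$ is absolutely integrable with respect to $S_j(K)$. I will split $S^{n-1}$ into two small polar caps $U_\pm$ around $\pm e_n$ and the complementary equatorial band. On the equatorial band, $f(v_n)$ is bounded, being the restriction of a continuous function to some set $\{|v_n|\le r\}$ with $r<1$, and $S_j(K)$ has finite total mass, so that contribution is harmless. On each cap $U_\pm$, the $C^2$-assumption on $h_K$ means that $S_j(K)\vert_{U_\pm}$ is absolutely continuous with respect to $\mathcal{H}^{n-1}$ with a continuous (hence bounded) density, expressed by the standard formula as a normalized elementary symmetric function of the eigenvalues of the spherical Hessian of $h_K$ plus identity; see \cite[Section~2.5]{SchneiderConvexbodiesBrunn2014}.

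After this reduction, passing to spherical coordinates centered at $e_n$ and substituting $t=v_n=\cos\theta$ turns $\mathcal{H}^{n-1}$ on $U_+$ into a positive multiple of $(1-t^2)^{(n-3)/2}\,dt$ times angular measure on $S^{n-2}$. So the task reduces to verifying
\begin{align*}
	\int_{r_0}^{1}|f(t)|(1-t^2)^{\frac{n-3}{2}}\,dt<\infty
\end{align*}
for some $r_0<1$, together with the mirror statement near $-1$. Here I will use that $f\in D^{\frac{n-j-1}{2}}$ makes $t\mapsto(1-t^2)^{\frac{n-j-1}{2}}f(t)$ a continuous function on $(-1,1)$ that extends continuously by $0$ to $[\pm 1]$, hence is bounded by some constant $C_f$. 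This gives the pointwise majorant $|f(t)|(1-t^2)^{(n-3)/2}\le C_f(1-t^2)^{(j-2)/2}$, which is integrable near $\pm 1$ precisely because $j\ge 1$. The only place requiring attention is the invocation of the density formula for $S_j(K)$ under the $C^2$-assumption; the remainder is a routine integrability estimate calibrated exactly to the definition of $D^{\frac{n-j-1}{2}}$.
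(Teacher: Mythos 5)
Your proof is correct and follows the same two-step strategy as the paper: dominated convergence applied to the truncated integrals from Theorem~\ref{theorem:PrincipalValue} for the first claim, and for the second claim the $C^2$-regularity giving a locally bounded density for $S_j(K)$ near $\pm e_n$ together with integrability of $v\mapsto f(v_n)$ against spherical Lebesgue measure. You spell out the latter integrability estimate (reducing to $\int|f(t)|(1-t^2)^{(n-3)/2}dt<\infty$ via the bound $(1-t^2)^{(n-j-1)/2}|f|\le C_f$, which the paper leaves implicit), but the argument is the same.
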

 	\begin{proof}
 		The first part follows directly from Theorem \ref{theorem:PrincipalValue} using dominated convergence. The second part follows from the fact that the measure $S_j(K)$ is absolutely continuous with respect to the spherical Lebesgue measure on a neighborhood of $\pm e_n$ with locally bounded density if $h_K$ is of class $C^2$ on a neighborhood of $e_n$ (see \cite[Section 4.2]{SchneiderConvexbodiesBrunn2014}), and $v\mapsto f(v_n)$ is integrable with respect to the spherical Lebesgue measure.
 	\end{proof}
 	The following result provides a characterization of those functions for which the representation is always a proper integral.
 	\begin{proposition}
 		\label{proposition:GeneralIntegrability}
 		Let $1\le j\le n-2$. For $f\in D^{\frac{n-j-1}{2}}$, the following are equivalent:
 		\begin{enumerate}
 			\item the function $v\mapsto f(v_n)$ is integrable with respect to $S_j(K)$ for every $K\in\mathcal{K}(\R^n)$;
 			\item the function $v\mapsto f(v_n)$ is integrable with respect to $S_j(\D^{n-1})$;
 			\item $\int_{-1}^1 |f(t)|(1-t^2)^{\frac{n-j-3}{2}}dt<\infty$;
 			\item $|f|\in D^{\frac{n-j-1}{2}}$.
 		\end{enumerate} 
 		In this case,
 		\begin{align*}
 			\Phi(f)[K]=\int_{S^{n-1}}f(v_n)dS_j(K,v)\quad\text{for every}~K\in\mathcal{K}(\R^n).
 		\end{align*}
 	\end{proposition}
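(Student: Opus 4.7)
My plan is to establish the cyclic chain $(1) \Rightarrow (2) \Rightarrow (3) \Leftrightarrow (4) \Rightarrow (1)$; once (1) is in hand, the integral representation follows at once from Corollary \ref{corollary:RepresentationPhiIntegrableCase}. The implication $(1) \Rightarrow (2)$ is immediate because $\D^{n-1} \in \mathcal{K}(\R^n)$.

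For $(2) \Leftrightarrow (3)$, I would plug the explicit density \eqref{eq:areaMeasureDisk} into the integral and pass to spherical cylindrical coordinates as in the proof of Lemma \ref{lemma:IntegrationCones}. The density factor $(1-v_n^2)^{-j/2}$ combines with the Jacobian $(1-v_n^2)^{(n-3)/2}$ to give
\[
	\int_{S^{n-1}} |f(v_n)|\, dS_j(\D^{n-1}, v) = (n-1-j)\,\omega_{n-1}\int_{-1}^{1} |f(t)|(1-t^2)^{\frac{n-j-3}{2}}\, dt,
\]
and finiteness of the right-hand side is precisely condition (3). The equivalence $(3) \Leftrightarrow (4)$ is then an unwinding of Definition \ref{def:Dj} applied to $|f|$ at $a = \frac{n-j-1}{2}$: condition (i) for $|f|$ is inherited from $f \in D^{\frac{n-j-1}{2}}$ because $(1-s^2)^{a}|f(s)| = |(1-s^2)^{a}f(s)|$, and condition (ii) asserts the existence and finiteness of $\lim_{s\to\pm 1}\int_0^s|f(t)|(1-t^2)^{\frac{n-j-3}{2}}\, dt$, which by monotonicity of the nonnegative integrand is equivalent to (3).

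The substantive step is $(4) \Rightarrow (1)$. Setting $g := |f|$, Lemma \ref{lemma:ContFctDenseDj} produces continuous truncations $g^r \in C([-1,1])$ with $g^r \to g$ in $D^{\frac{n-j-1}{2}}$ as $r \to 1$; in particular $\sup_r\|g^r\|_{D^{\frac{n-j-1}{2}}}<\infty$. Since $g\geq 0$, the truncations $g^r$ are nonnegative everywhere, and $g^r(v_n) = |f(v_n)|$ on $\{v \in S^{n-1}: |v_n|\leq r\}$. Applying Theorem \ref{theorem:estimateIntegrationSj} to $g^r$ therefore yields the $r$-uniform bound
\[
	\int_{\{|v_n|\leq r\}} |f(v_n)|\, dS_j(K,v) \;\leq\; \int_{S^{n-1}} g^r(v_n)\, dS_j(K,v) \;\leq\; C_{n,j}\,\|h_K\|_\infty^j\,\|g^r\|_{D^{\frac{n-j-1}{2}}}.
\]
Letting $r \to 1$, the family $\{|v_n|\leq r\}$ exhausts $S^{n-1}\setminus\{\pm e_n\}$, and monotone convergence delivers a finite bound on the integral of $|f(v_n)|$ against $S_j(K)$ for every $K$, which is (1).

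The integral representation in the final claim then follows by dominated convergence applied to the principal value identity in Theorem \ref{theorem:PrincipalValue}, using $|f(v_n)|$ as the (now integrable) dominant; this is exactly Corollary \ref{corollary:RepresentationPhiIntegrableCase}. I expect the main obstacle to be obtaining the $r$-uniform bound in the central display above, and this is precisely why one needs norm convergence of $g^r$ to $g$ in $D^{\frac{n-j-1}{2}}$ rather than merely pointwise convergence, so that Theorem \ref{theorem:estimateIntegrationSj} can be combined with Lemma \ref{lemma:ContFctDenseDj} to close the estimate.
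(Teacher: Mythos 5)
Your proof is correct and takes essentially the same route as the paper: the same cyclic chain, with the trivial implications handled the same way and the substantive one $(4)\Rightarrow(1)$ resting on monotone convergence plus the norm estimate. The only cosmetic difference is that the paper obtains $(4)\Rightarrow(1)$ in one line by applying Theorem~\ref{theorem:PrincipalValue} to $|f|$ and identifying $\int_{S^{n-1}}|f(v_n)|\,dS_j(K,v)=\Phi_j(|f|)[K]$, whereas you re-derive the uniform bound from Theorem~\ref{theorem:estimateIntegrationSj} applied to the truncations of Lemma~\ref{lemma:ContFctDenseDj}, which is precisely how Theorem~\ref{theorem:PrincipalValue} was proved in the first place.
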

 	\begin{proof}
 		Note that the last statement is a direct consequence of 1. and Corollary \ref{corollary:RepresentationPhiIntegrableCase}. Let us thus show that the four statements are equivalent.\\
 		The implications 1 $\Rightarrow 2$ and 3 $\Rightarrow$ 4 are trivial. Let us show 2 $\Rightarrow 3$. If the function $v\mapsto f(v_n)$ is integrable with respect to $S_j(\D^{n-1})$, the same applies to $v\mapsto |f(v_n)|$, so by approximating $|f|$ by a sequence of continuous functions and using monotone convergence, Lemma \ref{lemma:IntegrationCones} implies (noting that $\D^{n-1}=C_0$)
 		\begin{align*}
 			\int_{S^{n-1}}|f(v_n)|dS_j(\D^{n-1},v)=\omega_{n-1}(n-j-1)\int_{-1}^1|f(t)|(1-t^2)^{\frac{n-j-3}{2}}dt,
 		\end{align*}
 		so $\int_{-1}^1|f(t)|(1-t^2)^{\frac{n-j-3}{2}}dt<\infty$.\\
 		It remains to show 4 $\Rightarrow$ 1. Let us thus assume that $|f|\in D^{\frac{n-j-1}{2}}$. Since $S_j(K)$ is a non-negative measure for any $K\in\mathcal{K}(\R^n)$, we may apply monotone convergence and Theorem \ref{theorem:PrincipalValue} to obtain
 		\begin{align*}
 			\int_{S^{n-1}} |f(v_n)|dS_j(K,v)=\lim\limits_{r\rightarrow1}\int_{\{v\in S^{n-1}:|v_n|\le r\}}|f(v_n)|dS_j(K,v)=\Phi_j(|f|)[K],
 		\end{align*}
 		which is finite since $|f|\in D^{\frac{n-j-1}{2}}$. Thus $y\mapsto f(y_n)$ is integrable with respect to $S_j(K)$.
 	\end{proof}
 	Let us remark that not every $f\in D^{\frac{n-j-1}{2}}$ satisfies $|f|\in D^{\frac{n-j-1}{2}}$. An example can be constructed using the same argument as in the proof of \cite[Corollary 4.18]{Knoerrgeometricdecompositionunitarily2024}.\\

 		Let us mention the following application of this result: In his solution to the Christoffel problem, Berg \cite{BergCorpsconvexeset1969} constructed continuous functions $g_d$, $d\ge 2$, on $[-1,1)$ such that a centered measure on $S^{d-1}$ is the first area measure of a convex body if and only if 
 		\begin{align*}
 			\int_{S^{d-1}}g_d(\langle\cdot,v\rangle)d\mu(v)
 		\end{align*}
 		is a support function (where the integral is once again understood as a spherical convolution). These functions are known as Berg's functions, and diverge to $-\infty$ for $t\rightarrow1$ for $d\ge 3$. It was later shown by Goodey and Weil \cite{GoodeyWeilSumssectionssurface2014} that these functions are related to the mean section operators and that this connects mean section bodies to the general Minkowski problem. In our notation, they showed that the $d$th centered mean section operator on $\mathcal{K}(\R^n)$ corresponds to a multiple of $\Phi_{n+1-d}(g_d)$.\\
 		Note that this is actually well defined: It was shown by Berg in \cite[Theorem 3.3]{BergCorpsconvexeset1969} and \cite[p. 31]{BergCorpsconvexeset1969} that
 		\begin{align*}
 			\int_{-1}^1 |g_d(t)|(1-t^2)^{\frac{d-4}{2}}dt&<\infty &&\text{for}~d\ge 3,\\
 			\lim\limits_{t\rightarrow1}(1-t^2)^{\frac{d-3}{2}+\epsilon}g_d(t)&=0 &&\text{for every}~\epsilon>0.
 		\end{align*}
 		Together with the fact that $g_d\in C([-1,1))$, this implies that $g_d,|g_d|\in D^{\frac{d-2}{2}}=D^{\frac{n-(n+1-d)-1}{2}}$ for $d\ge 3$. Since $g_2$ is continuous and $S_0(K)$ is the spherical Lebesgue measure for every $K\in\mathcal{K}(\R^n)$, Proposition \ref{proposition:GeneralIntegrability} implies the following.
 		\begin{corollary}
 			\label{corollary:IntegrabilityBergsFunctions}
 			For every $2\le d\le n+1$, the function $v\mapsto g_d(v_n)$ is integrable with respect to $S_{n+1-d}(K)$ for every $K\in\mathcal{K}(\R^n)$.
 		\end{corollary}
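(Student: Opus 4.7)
The plan is to deduce this corollary from Proposition \ref{proposition:GeneralIntegrability} in the generic range $3 \le d \le n$, and to handle the two boundary values $d = 2$ and $d = n+1$ by direct arguments. The key numerical observation is that for $j = n+1-d$ the exponent $\frac{d-2}{2}$ appearing in Berg's asymptotics coincides with $\frac{n-j-1}{2}$, so Berg's function $g_d$ naturally lives in the correct space $D^{\frac{n-j-1}{2}}$ to which the proposition applies.

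For $3 \le d \le n$ (equivalently $1 \le j \le n-2$), I would verify that both $g_d$ and $|g_d|$ belong to $D^{\frac{d-2}{2}}$. The first defining property, $\lim_{s \to \pm 1}(1-s^2)^{\frac{d-2}{2}} g_d(s) = 0$, holds at $s = -1$ because $g_d$ is continuous on $[-1,1)$ and the weight vanishes, and at $s = 1$ it is precisely Berg's estimate $\lim_{t \to 1}(1-t^2)^{\frac{d-3}{2}+\epsilon}g_d(t) = 0$ applied with $\epsilon = \frac{1}{2}$. The second defining property, existence and finiteness of $\lim_{s \to \pm 1}\int_0^s (1-t^2)^{\frac{d-4}{2}} g_d(t)\,dt$, follows from absolute convergence, since Berg's $L^1$-bound is exactly $\int_{-1}^1 |g_d(t)|(1-t^2)^{\frac{d-4}{2}}\,dt < \infty$. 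These arguments apply verbatim to $|g_d|$, verifying condition (4) of Proposition \ref{proposition:GeneralIntegrability}; condition (1)---integrability of $v \mapsto g_d(v_n)$ against $S_j(K)$ for every $K\in\mathcal{K}(\R^n)$---is then immediate.

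For the boundary cases: when $d = 2$, the function $g_2$ is continuous on all of $[-1,1]$ (it does not diverge at $1$), hence bounded, so $v \mapsto g_2(v_n)$ is a continuous function on $S^{n-1}$, trivially integrable against the finite measure $S_{n-1}(K)$. When $d = n+1$, we have $j = 0$ and $S_0(K)$ is a fixed multiple of the spherical Lebesgue measure, independent of $K$; passing to spherical cylindrical coordinates, integrability reduces to the finiteness of $\int_{-1}^1 |g_{n+1}(t)|(1-t^2)^{\frac{n-2}{2}}\,dt$, and this is implied by Berg's $L^1$-bound $\int_{-1}^1 |g_{n+1}(t)|(1-t^2)^{\frac{n-3}{2}}\,dt < \infty$, since $(1-t^2)^{\frac{n-2}{2}} \le (1-t^2)^{\frac{n-3}{2}}$ for $t \in (-1,1)$.

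I do not anticipate any real obstacle here. The substance of the argument was built into Proposition \ref{proposition:GeneralIntegrability} and the preceding analysis of the space $D^a$; once the exponent identification $\frac{d-2}{2} = \frac{n-j-1}{2}$ is made explicit, the generic case is a routine verification of the two conditions defining $D^a$ against Berg's two estimates, and the two endpoints are each resolved in a sentence.
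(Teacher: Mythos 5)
Your argument matches the paper's: verify $g_d,|g_d|\in D^{\frac{d-2}{2}}=D^{\frac{n-(n+1-d)-1}{2}}$ from Berg's two estimates and apply Proposition~\ref{proposition:GeneralIntegrability} for $3\le d\le n$, then handle the boundary cases $d=2$ and $d=n+1$ directly. One small slip in the $d=n+1$ case: the weight arising from spherical cylindrical coordinates on $S^{n-1}$ is $(1-t^2)^{\frac{n-3}{2}}$, not $(1-t^2)^{\frac{n-2}{2}}$, so the required integral is exactly Berg's $L^1$-bound rather than merely dominated by it -- the conclusion is unchanged.
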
 
		
\section{Proof of the characterization result}
\label{section:Classification}
\subsection{Preliminary considerations}
	\begin{lemma}
		\label{lemma:valuesPhionCones}
		For $f\in D^{\frac{n-j-1}{2}}$, $s\in[-1,1]\setminus\{0\}$,
		\begin{align*}
			&\Phi_j(f)\left[C_{\frac{\sqrt{1-s^2}}{s}}\right]=\omega_{n-1} I_{\frac{n-j-1}{2}}(f)[s].
		\end{align*}
	\end{lemma}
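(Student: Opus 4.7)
The plan is to verify the identity first for continuous functions $f\in C([-1,1])$, where it follows directly from Lemma \ref{lemma:IntegrationCones}, and then extend to all of $D^{\frac{n-j-1}{2}}$ by continuity, using the density statement of Lemma \ref{lemma:ContFctDenseDj} together with the continuity of both $\Phi_j$ and $I_{\frac{n-j-1}{2}}$.

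First, I would verify the parametrization. For $s\in [-1,1]\setminus\{0\}$, set $h=\frac{\sqrt{1-s^2}}{s}$. A short computation gives $1+h^2 = 1/s^2$, and checking the sign in the two cases $s>0$ and $s<0$ shows that $\sign(h)(1+h^2)^{-1/2}=s$, which is exactly the change of variable appearing in Lemma \ref{lemma:IntegrationCones}. Thus for $f\in C([-1,1])$, combining the definition of $\Phi_j$ in \eqref{eq:formulaPhiContinuousFunction} with Lemma \ref{lemma:IntegrationCones} (with $a=\frac{n-j-1}{2}$, so that $2a=n-j-1$) immediately yields
\begin{align*}
\Phi_j(f)\Bigl[C_{\frac{\sqrt{1-s^2}}{s}}\Bigr] = \int_{S^{n-1}} f(v_n)\,dS_j\bigl(C_{\frac{\sqrt{1-s^2}}{s}},v\bigr) = \omega_{n-1} I_{\frac{n-j-1}{2}}(f)[s].
\end{align*}

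Second, I would argue that both sides of the asserted identity depend continuously on $f\in D^{\frac{n-j-1}{2}}$ for every fixed $s\in[-1,1]\setminus\{0\}$. On the left, $\Phi_j\colon D^{\frac{n-j-1}{2}}\to \Val_j(\mathbb{R}^n)^{\SO(n-1)}$ is continuous by Theorem \ref{theorem:ExtensionPhi}, and for any fixed convex body $K$ the evaluation $\mu\mapsto \mu(K)$ is continuous on $\Val_j(\mathbb{R}^n)$ with respect to the norm topology. On the right, $I_{\frac{n-j-1}{2}}\colon D^{\frac{n-j-1}{2}}\to\mathcal{C}$ is continuous by Corollary \ref{corollary:IaProperties}, and for $s\neq 0$ the functional $u\mapsto u(s)$ is continuous on $\mathcal{C}$ since $|u(s)|\leq |s|^{-1}\|u\|_{\mathcal{C}}$.

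Finally, since $C([-1,1])\subset D^{\frac{n-j-1}{2}}$ is dense by Lemma \ref{lemma:ContFctDenseDj}, the identity extends from continuous functions to all of $D^{\frac{n-j-1}{2}}$, completing the proof. There is no substantial obstacle here: the content has already been established in Lemma \ref{lemma:IntegrationCones}, and the rest is a density-and-continuity argument tailored to the topology on $D^{\frac{n-j-1}{2}}$ constructed in Section \ref{section:Da}.
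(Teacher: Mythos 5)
Your proof is correct and follows essentially the same route as the paper: the identity is checked directly on the dense subspace $C([-1,1])$ via Lemma~\ref{lemma:IntegrationCones}, and then extended using the continuity of $\Phi_j$ (Theorem~\ref{theorem:ExtensionPhi}) and $I_{\frac{n-j-1}{2}}$ (Corollary~\ref{corollary:IaProperties}) together with the density statement of Lemma~\ref{lemma:ContFctDenseDj}. The explicit parametrization check and the bound $|u(s)|\le |s|^{-1}\|u\|_{\mathcal C}$ that you spell out are details the paper leaves implicit, but there is no difference in substance.
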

	\begin{proof}
		Note that both sides depend continuously on $f\in D^{\frac{n-j-1}{2}}$ for every fixed $s\in [-1,1]\setminus\{0\}$ by Theorem \ref{theorem:ExtensionPhi} and  Corollary \ref{corollary:IaProperties}. Moreover, the equation holds for $f\in C([-1,1])$ by Lemma \ref{lemma:IntegrationCones}, which is a dense subspace of $D^{\frac{n-j-1}{2}}$ by Lemma \ref{lemma:ContFctDenseDj}. Thus it holds for all $f\in D^{\frac{n-j-1}{2}}$.
	\end{proof}
	\begin{corollary}
		\label{corollary:KernelPhij}
		If $f\in D^{\frac{n-j-1}{2}}$ satisfies $\Phi_j(f)=0$, then $f$ is linear.
	\end{corollary}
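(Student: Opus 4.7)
The plan is to extract enough information from $\Phi_j(f)=0$ by testing against the family of cones $C_h$, and then invert the integral transform $I_{(n-j-1)/2}$ using the tools already developed in Section~\ref{section:Da}. Set $a:=\tfrac{n-j-1}{2}>0$, which is legitimate since $1\le j\le n-2$.

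First I would apply Lemma~\ref{lemma:valuesPhionCones}: as $h$ ranges over $\R\setminus\{0\}$, the parameter $s=\sign(h)(1+h^2)^{-1/2}$ ranges over $(-1,1)\setminus\{0\}$, and (taking the limit $h\to 0^\pm$ together with the continuity provided by Corollary~\ref{corollary:IaProperties}) we may include the endpoints $s=\pm 1$ as well. Hence
\begin{align*}
\Phi_j(f)=0 \quad\Longrightarrow\quad I_a(f)[s]=0\ \text{for all}\ s\in[-1,1]\setminus\{0\}.
\end{align*}

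Next I would exploit the parity. Decompose $f=f_e+f_o$ into even and odd parts; both lie in $D^a$ since the defining conditions are invariant under $s\mapsto -s$. By Corollary~\ref{corollary:IaProperties}(2), the map $I_a$ preserves parity, so $I_a(f)=0$ splits as $I_a(f_e)=0$ and $I_a(f_o)=0$. Now I would apply the inversion formulas of Lemma~\ref{lemma:CompositionJaIa}. For the even part,
\begin{align*}
f_e = J_a\circ I_a(f_e) = J_a(0) = 0.
\end{align*}
For the odd part,
\begin{align*}
0 = J_a\circ I_a(f_o)[s] = f_o(s) - s\cdot 2a\int_0^1 f_o(t)(1-t^2)^{a-1}\,dt,
\end{align*}
so $f_o(s)=cs$ for the constant $c:=2a\int_0^1 f_o(t)(1-t^2)^{a-1}\,dt$. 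Combining, $f(s)=cs$ is linear.

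There is no serious obstacle here since all the analytic work is already contained in the preceding lemmas: Lemma~\ref{lemma:valuesPhionCones} produces the equation $I_a(f)=0$, and Lemma~\ref{lemma:CompositionJaIa} inverts $I_a$ modulo exactly the kernel spanned by $s\mapsto s$. The only thing to double-check is that the parity splitting is compatible with $D^a$ and with $I_a$, which is immediate from the definitions.
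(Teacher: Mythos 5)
Your proof is correct and follows essentially the same route as the paper: evaluate on the cones $C_h$ via Lemma \ref{lemma:valuesPhionCones} to get $I_{\frac{n-j-1}{2}}(f)=0$, then invert with Lemma \ref{lemma:CompositionJaIa}. The paper states the inversion directly for general $f$ as $0=f(s)-(n-j-1)s\int_0^1\frac{f(t)-f(-t)}{2}(1-t^2)^{\frac{n-j-3}{2}}dt$, while you make the even/odd splitting explicit; these are the same computation. (One small remark: the limit $h\to 0^\pm$ in your parenthetical is unnecessary, since Lemma \ref{lemma:valuesPhionCones} already covers $s=\pm 1$ directly via $C_0=\D^{n-1}$, and in any case $I_a(f)=0$ on $(-1,1)\setminus\{0\}$ already propagates to $s=\pm 1$ by continuity of $I_a(f)$ on $[-1,1]\setminus\{0\}$.)
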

	\begin{proof}
		Assume that $f\in D^{\frac{n-j-1}{2}}$ satisfies $\Phi_j(f)=0$. By Lemma \ref{lemma:valuesPhionCones}, $I_{\frac{n-j-1}{2}}(f)=0$, and so Lemma \ref{lemma:CompositionJaIa} shows that 
		\begin{align*}
			0=J_{\frac{n-j-1}{2}}\circ I_{\frac{n-j-1}{2}}(f)[s]=f(s)-(n-j-1)s\int_{0}^1\frac{f(t)-f(-t)}{2}(1-t^2)^{\frac{n-j-3}{2}}dt,
		\end{align*}
		so $f$ is linear. 
	\end{proof}

	For $\mu\in\Val_j(\R^n)$, $1\le j\le n-1$, we define the function 
	\begin{align*}
		\phi_\mu:[-1,1]\setminus\{0\}&\rightarrow\R\\
		s&\mapsto \mu\left(C_{\frac{\sqrt{1-s^2}}{s}}\right).
	\end{align*}
	Note that Lemma \ref{lemma:valuesPhionCones} implies that $\phi_\mu\in \mathcal{C}$ if $\mu$ belongs to the image of $\Phi_j$, compare Corollary \ref{corollary:IaProperties}. We will show that this property holds for arbitrary valuations. We require the following observation for truncated cones.
	
	\begin{lemma}
		\label{lemma:relationTruncatedCones}
		Define the following convex bodies for $0<\epsilon<1$, $h\in\R$:
		\begin{align*}
			\tilde{C}_{h,\epsilon}:=&\{x\in C_h: |x_n|\ge  \epsilon|h|\}=(1-\epsilon)C_h+\epsilon h e_n,\\
			D_{h,\epsilon}:=&\{x\in C_h: |x_n|\le  \epsilon|h|\}.
		\end{align*}
		Then 
		\begin{align*}
			&\tilde{C}_{h,\epsilon}\cup D_{h,\epsilon}=C_h, &&\tilde{C}_{h,\epsilon}\cap D_{h,\epsilon}=(1-\epsilon)\D^{n-1}+\epsilon he_n.
		\end{align*}
		Moreover, for $\mu\in\Val_j(\R^n)$:
		\begin{align*}
			\mu(D_{h,\epsilon})=\left[1-(1-\epsilon)^j\right]\mu(C_h)+(1-\epsilon)^j\mu(\D^{n-1}).
		\end{align*}
	\end{lemma}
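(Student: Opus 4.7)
The plan is to reduce everything to a single parametrization of points in $C_h$ as convex combinations of the apex $he_n$ and the base disk $\D^{n-1}$, then apply inclusion-exclusion together with translation invariance and $j$-homogeneity of $\mu$. Concretely, since $C_h=\mathrm{conv}(\D^{n-1},\{he_n\})$ and $\D^{n-1}$ is already convex, every $x\in C_h$ admits a representation
\begin{align*}
	x=\lambda he_n+(1-\lambda)(d,0),\qquad \lambda\in[0,1],\ d\in\D^{n-1},
\end{align*}
and in particular $x_n=\lambda h$. In the nondegenerate case $h\neq 0$, points in $C_h$ all have $x_n$ of the same sign as $h$, so $|x_n|\ge \epsilon|h|$ is equivalent to $\lambda\ge\epsilon$. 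Substituting $\lambda=\epsilon+(1-\epsilon)\lambda'$ with $\lambda'\in[0,1]$ rewrites such an $x$ as $\epsilon he_n+(1-\epsilon)[\lambda'he_n+(1-\lambda')(d,0)]$, yielding the identity $\tilde C_{h,\epsilon}=(1-\epsilon)C_h+\epsilon he_n$ asserted in the statement.

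From the same parametrization, the union identity is immediate: $\tilde C_{h,\epsilon}\cup D_{h,\epsilon}=C_h$ since $\{\lambda\ge\epsilon\}\cup\{\lambda\le\epsilon\}=[0,1]$. The intersection $\tilde C_{h,\epsilon}\cap D_{h,\epsilon}$ consists of those $x\in C_h$ with $\lambda=\epsilon$, i.e. the slice $\{\epsilon he_n+(1-\epsilon)(d,0):d\in\D^{n-1}\}=(1-\epsilon)\D^{n-1}+\epsilon he_n$. The degenerate case $h=0$, where $C_0=\D^{n-1}$, can be handled separately (or simply noted to make the final valuation identity trivial).

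For the valuation formula, I would apply the valuation property to the pair $(\tilde C_{h,\epsilon},D_{h,\epsilon})$, whose union $C_h$ is convex. This gives
\begin{align*}
	\mu(C_h)=\mu(\tilde C_{h,\epsilon})+\mu(D_{h,\epsilon})-\mu(\tilde C_{h,\epsilon}\cap D_{h,\epsilon}).
\end{align*}
Using $\tilde C_{h,\epsilon}=(1-\epsilon)C_h+\epsilon he_n$ together with translation invariance and $j$-homogeneity yields $\mu(\tilde C_{h,\epsilon})=(1-\epsilon)^j\mu(C_h)$, and analogously $\mu(\tilde C_{h,\epsilon}\cap D_{h,\epsilon})=\mu((1-\epsilon)\D^{n-1})=(1-\epsilon)^j\mu(\D^{n-1})$. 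Solving the resulting linear equation for $\mu(D_{h,\epsilon})$ produces the stated formula.

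There is no real obstacle here: the lemma is purely a bookkeeping step that expresses $\mu(D_{h,\epsilon})$ as an affine combination of the two reference values $\mu(C_h)$ and $\mu(\D^{n-1})$, preparing for the subsequent limit $\epsilon\to 0$ in the analysis of the function $\phi_\mu$. The only mild subtlety is making sure the set identities are stated correctly across the cases $h>0$, $h<0$, and $h=0$; the convex-combination parametrization handles all three uniformly.
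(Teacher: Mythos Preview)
Your proof is correct and follows the same route as the paper: the valuation identity is obtained from inclusion--exclusion applied to the convex pair $(\tilde C_{h,\epsilon},D_{h,\epsilon})$, combined with translation invariance and $j$-homogeneity, exactly as in the paper's argument. The only difference is that the paper dismisses the set identities as ``clear,'' whereas you spell them out via the convex-combination parametrization of $C_h$; your flagging of the $h=0$ case is appropriate, since the identity $\tilde C_{h,\epsilon}=(1-\epsilon)C_h+\epsilon he_n$ indeed degenerates there, but the valuation formula is trivial in that case.
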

	\begin{proof}
		The first statement is clear. For the second, we use that $\mu$ is a $j$-homogeneous and translation invariant valuation, so
		\begin{align*}
			\mu(C_h)=&\mu(\tilde{C}_{h,\epsilon})+\mu(D_{h,\epsilon})-\mu(\tilde{C}_{h,\epsilon}\cap D_{h,\epsilon})\\
			=&(1-\epsilon)^j\mu(C_h)+\mu(D_{h,\epsilon})-(1-\epsilon)^j\mu(\D^{n-1}).
		\end{align*}
	\end{proof}

\begin{proposition}
	\label{proposition:EvaluationMapValToC}
	For $\mu\in\Val_j(\R^n)$, $1\le j\le n-1$, the function $\phi_\mu:[-1,1]\setminus\{0\}\rightarrow\R$
	has the following properties:
	\begin{enumerate}
		\item $\phi_\mu$ is continuous on $[-1,1]\setminus\{0\}$.
		\item $\phi_\mu(1)=\phi_\mu(-1)$.
		\item The limit $\lim\limits_{ s\rightarrow0}|s|\phi_\mu(s)$ exists and is finite.
	\end{enumerate}
	In particular, $\phi_\mu\in \mathcal{C}$. Moreover, the map
	\begin{align*}
		\phi:\Val_j(\R^n)&\rightarrow \mathcal{C}\\
		\mu&\mapsto \phi_\mu
	\end{align*}
	is continuous.
\end{proposition}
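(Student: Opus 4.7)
My plan is to verify the three properties defining $\mathcal{C}$ and then the continuity of $\mu\mapsto\phi_\mu$. Properties (1) and (2) are essentially immediate: the map $s\mapsto h(s):=\sqrt{1-s^2}/s$ is a continuous bijection $[-1,1]\setminus\{0\}\to\R$, and $h\mapsto C_h=\mathrm{conv}(\D^{n-1},\{he_n\})$ is Hausdorff-continuous (convex hulls of continuously varying compact sets vary continuously). So $\phi_\mu$ is continuous by continuity of $\mu$, and because $C_0=\D^{n-1}$ we immediately get $\phi_\mu(1)=\phi_\mu(-1)=\mu(\D^{n-1})$.

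The core of the argument is the limit in (3), and this is where Lemma \ref{lemma:relationTruncatedCones} enters. For a fixed $a>0$ and $|h|>a$, I would set $\epsilon:=a/|h|$, so that the truncation occurs at height $a$ rather than at a fraction of $|h|$. Solving the identity of the Lemma for $\mu(C_h)$ yields
\begin{equation*}
	\mu(C_h)=\frac{\mu(D_{h,a/|h|})-(1-a/|h|)^j\,\mu(\D^{n-1})}{1-(1-a/|h|)^j}.
\end{equation*}
A direct cross-section inspection shows that, as $|h|\to\infty$, $D_{h,a/|h|}$ converges in the Hausdorff metric to the cylinder $\D^{n-1}+[0,\sign(h)\,ae_n]$: the cross-section at height $y\in[0,\sign(h)\,a]$ is $(1-y/h)\D^{n-1}+ye_n$, and $(1-y/h)\to 1$ uniformly in this range. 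Using $1-(1-a/|h|)^j=ja/|h|+O(1/h^2)$ together with $|s|\sqrt{1+h^2}=1$, the prefactor $|s|/(1-(1-a/|h|)^j)$ tends to $1/(ja)$. Continuity of $\mu$ then yields
\begin{equation*}
	\lim_{h\to\pm\infty}|s|\,\mu(C_h)=\frac{1}{ja}\bigl[\mu(\D^{n-1}+[0,\sign(h)\,ae_n])-\mu(\D^{n-1})\bigr].
\end{equation*}
Translation invariance of $\mu$ identifies $\mu(\D^{n-1}+[-ae_n,0])=\mu(\D^{n-1}+[0,ae_n])$, so the two one-sided limits coincide and the two-sided limit exists and is finite. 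The main obstacle is this specific choice of $\epsilon=a/|h|$: the naive bound $|\mu(C_h)|\le\|\mu\|\,|h|^j$ gives $|s||\mu(C_h)|\sim|h|^{j-1}\|\mu\|$, which blows up for $j\ge 2$, and it is the variable truncation that converts the apex blowup into a finite cylinder of height $a$.

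For the continuity of $\phi:\Val_j(\R^n)\to\mathcal{C}$, the same decomposition works uniformly. I would split $\|\phi_\mu-\phi_\nu\|_{\mathcal{C}}=\sup_s|s||(\mu-\nu)(C_h)|$ into the regimes $|h|\le 1$ and $|h|>1$. In the first, $\|h_{C_h}\|_\infty\le 1$, so $C_h\subset B_1(0)$ and hence $|s||(\mu-\nu)(C_h)|\le\|\mu-\nu\|$. In the second, Lemma \ref{lemma:relationTruncatedCones} applied with $\epsilon=1/|h|$ gives $D_{h,1/|h|}\subset\D^{n-1}+[0,\sign(h)e_n]\subset B_2(0)$, while the prefactor $|s|/(1-(1-1/|h|)^j)$ is a continuous function of $t=1/|h|\in(0,1]$ with finite limit $1/j$ at $t=0$, hence uniformly bounded by some constant $C_1$. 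Combining the two regimes yields $\|\phi_\mu-\phi_\nu\|_{\mathcal{C}}\le C\|\mu-\nu\|$, which proves continuity of $\phi$.
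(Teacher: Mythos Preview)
Your proposal is correct and follows essentially the same approach as the paper: both use Lemma~\ref{lemma:relationTruncatedCones} to replace the unbounded cone $C_h$ by a truncated cone that stays in a fixed ball, and then exploit continuity and translation invariance of $\mu$. The only cosmetic differences are that the paper takes $\epsilon=|s|$ (so the truncation height is $\sqrt{1-s^2}\to 1$) rather than your fixed height $a$, and the paper uses a single formula for all $s\in(-1,1)\setminus\{0\}$ in the continuity estimate instead of splitting into $|h|\le 1$ and $|h|>1$.
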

\begin{proof}
	The first property is trivial, while the second follows from the fact that $\phi_\mu(1)=\mu(C_0)=\phi_\mu(-1)$. For the third, consider for $0<\epsilon<1$, $h\in\R$, the convex bodies defined in Lemma \ref{lemma:relationTruncatedCones}. Since
	\begin{align*}
			\mu(C_h)=&(1-\epsilon)^j\mu(C_h)+\mu(D_{h,\epsilon})-(1-\epsilon)^j\mu(\D^{n-1})\\
			=&\sum_{i=0}^j\binom{j}{i}(-1)^i\epsilon^i \mu(C_h)+\mu(D_{h,\epsilon})-(1-\epsilon)^j\mu(\D^{n-1}),
	\end{align*}
	we obtain by rearranging
	\begin{align*}
		\epsilon\mu(C_h)=\frac{\mu(D_{h,\epsilon})-(1-\epsilon)^j\mu(\D^{n-1})}{\frac{1-(1-\epsilon)^j}{\epsilon}}=\frac{\mu(D_{h,\epsilon})-(1-\epsilon)^j\mu(\D^{n-1})}{-\sum_{i=1}^{j}\binom{j}{i}(-1)^i\epsilon^{i-1}}.
	\end{align*}
	For $h=\frac{\sqrt{1-s^2}}{s}$, $\epsilon=|s|$, we therefore obtain
	\begin{align}
		\label{eq:formulaNormPhiMu}
		|s|\mu\left(C_{\frac{\sqrt{1-s^2}}{s}}\right)=\frac{\mu\left(D_{\frac{\sqrt{1-s^2}}{s},|s|}\right)-(1-|s|)^j\mu(\D^{n-1})}{-\sum_{i=1}^{j}\binom{j}{i}(-1)^i|s|^{i-1}},
	\end{align}
	where 
	\begin{align*}
		D_{\frac{\sqrt{1-s^2}}{s},|s|}=\left\{x\in C_{\frac{\sqrt{1-s^2}}{s}}: |x_n|\le \sqrt{1-s^2}\right\}
	\end{align*}
	converges to $\D^{n-1}\times[0,\pm1]$ for $s\rightarrow0^\pm$. Consequently,
	\begin{align*}
		\lim\limits_{s\rightarrow0^\pm}|s|\mu\left(C_{\frac{\sqrt{1-s^2}}{s}}\right)=\lim\limits_{s\rightarrow0^\pm}\frac{\mu\left(D_{\frac{\sqrt{1-s^2}}{s},|s|}\right)-(1-|s|)^j\mu(\D^{n-1})}{-\sum_{i=1}^{j}\binom{j}{i}(-1)^i|s|^{i-1}}=\frac{\mu(\D^{n-1}\times[0,\pm1])-\mu(\D^{n-1})}{j}.
	\end{align*}
	Thus the two limits $\lim\limits_{s\rightarrow0^\pm}|s|\mu\left(C_{\frac{\sqrt{1-s^2}}{s}}\right)$ exist and coincide because $\mu$ is translation invariant. This completes the proof that $\phi_\mu\in \mathcal{C}$.\\
	
	In order to see that the map $\phi:\Val_j(\R^n)\rightarrow\mathcal{C}$ is continuous, note that \eqref{eq:formulaNormPhiMu} implies that
	\begin{align*}
		\|\phi_\mu\|_{\mathcal{C}}=&\sup_{s\in[-1,1]\setminus\{0\}}|s|\mu\left(C_{\frac{\sqrt{1-s^2}}{s}}\right)=\sup_{s\in[-1,1]\setminus\{0\}} \left|\frac{\mu\left(D_{\frac{\sqrt{1-s^2}}{s},|s|}\right)-(1-|s|)^j\mu(\D^{n-1})}{\frac{1-(1-|s|)^j}{|s|}}\right|.	
	\end{align*}
	Since 
	\begin{align*}
		D_{\frac{\sqrt{1-s^2}}{s},|s|}=\left\{x\in C_{\frac{\sqrt{1-s^2}}{s}}: |x_n|\le \sqrt{1-s^2}\right\}&\subset B_2(0)\quad\text{for all}~s\in [-1,1]\setminus\{0\},
	\end{align*}
	we obtain
	\begin{align*}
		\|\phi_\mu\|_{\mathcal{C}}\le&2^j\|\mu\|\sup_{s\in[-1,1]\setminus\{0\}} \frac{1+(1-|s|)^j}{\frac{1-(1-|s|)^j}{|s|}},
	\end{align*}
	which is bounded since $g(t):= \frac{1-(1-t)^j}{t}$ extends to a continuous function on $[0,1]$ with $g(0)=j$ and consequently has no zeros in $[0,1]$. Since $\mu\mapsto \phi_\mu$ is linear, this map is therefore continuous.
\end{proof}

\begin{corollary}
	\label{corollary:ReplaceValuationOnCones}
	For every $\mu\in \Val_j(\R^n)$ there exists $f\in D^{\frac{n-j-1}{2}}$ such that
	\begin{align*}
		\mu(C_h)=\Phi_j(f)[C_h]\quad\text{for all}~h\in\R.
	\end{align*}
\end{corollary}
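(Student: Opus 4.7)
The plan is to use the integral transforms $I_a$ and $J_a$ developed in the previous subsection together with the evaluation map $\mu \mapsto \phi_\mu$ from Proposition \ref{proposition:EvaluationMapValToC} to construct the desired function explicitly. Given $\mu \in \Val_j(\R^n)$, the first step is to note that $\phi_\mu \in \mathcal{C}$ by Proposition \ref{proposition:EvaluationMapValToC}, so one may define
\begin{align*}
f := J_{\frac{n-j-1}{2}}\!\left(\omega_{n-1}^{-1}\phi_\mu\right) \in D^{\frac{n-j-1}{2}}
\end{align*}
using the continuous map $J_a : \mathcal{C} \to D^a$ provided by Proposition \ref{proposition:ContinutiyJa}.

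The key identity $I_a \circ J_a = \mathrm{Id}$ from Proposition \ref{proposition:ContinutiyJa} then immediately yields
\begin{align*}
I_{\frac{n-j-1}{2}}(f) = \omega_{n-1}^{-1}\phi_\mu.
\end{align*}
Applying Lemma \ref{lemma:valuesPhionCones}, one obtains for every $s \in [-1,1]\setminus\{0\}$
\begin{align*}
\Phi_j(f)\!\left[C_{\frac{\sqrt{1-s^2}}{s}}\right] = \omega_{n-1} I_{\frac{n-j-1}{2}}(f)[s] = \phi_\mu(s) = \mu\!\left(C_{\frac{\sqrt{1-s^2}}{s}}\right).
\end{align*}
Since the map $s \mapsto \tfrac{\sqrt{1-s^2}}{s}$ sends $[-1,1]\setminus\{0\}$ onto $\R$ (with $s=\pm 1$ both mapping to $h=0$, i.e.\ to $\D^{n-1}$), this covers every cone $C_h$, $h \in \R$.

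The only subtlety, which I expect to be the main point to verify, concerns the boundary values $s=\pm 1$ corresponding to $h=0$. Here one needs $\omega_{n-1} I_{\frac{n-j-1}{2}}(f)[1] = \mu(\D^{n-1}) = \omega_{n-1} I_{\frac{n-j-1}{2}}(f)[-1]$, which is precisely where the condition $u(1)=u(-1)$ in the definition of $\mathcal{C}$ enters: it is exactly property 2 of Proposition \ref{proposition:EvaluationMapValToC} that guarantees $\phi_\mu(1) = \phi_\mu(-1)$, and hence the identity $I_a \circ J_a = \mathrm{Id}$ (which requires its input to lie in $\mathcal{C}$) can be invoked at these endpoints as well. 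No further approximation or regularity argument is needed; the corollary reduces to an application of the inversion formula from Proposition \ref{proposition:ContinutiyJa} combined with Lemma \ref{lemma:valuesPhionCones}.
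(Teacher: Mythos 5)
Your proof is correct and takes essentially the same route as the paper: define $f = \frac{1}{\omega_{n-1}}J_{\frac{n-j-1}{2}}(\phi_\mu)$ using Proposition \ref{proposition:EvaluationMapValToC} to know $\phi_\mu \in \mathcal{C}$, and then combine $I_a \circ J_a = \mathrm{Id}$ (Proposition \ref{proposition:ContinutiyJa}) with Lemma \ref{lemma:valuesPhionCones}. Your additional remarks on the endpoints $s = \pm 1$ and the role of condition $u(1)=u(-1)$ are a correct elaboration of details the paper leaves implicit.
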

\begin{proof}
	Set $f:=\frac{1}{\omega_{n-1}}J_{\frac{n-j-1}{2}}(\phi_\mu)\in D^{\frac{n-j-1}{2}}$, which is well defined due to Proposition \ref{proposition:ContinutiyJa} and Lemma \ref{proposition:EvaluationMapValToC}. This function has the desired property by Proposition \ref{proposition:ContinutiyJa} and Lemma \ref{lemma:valuesPhionCones}.
\end{proof}

\subsection{Proof of Theorem \ref{maintheorem:Classification} by approximation}
	\label{section:ClassificationProof1}
	 Our first proof of Theorem \ref{maintheorem:Classification} relies on the following simple consequence of the classification of smooth spherical valuations by Schuster and Wannerer in Theorem \ref{theorem:SchusterWannererClassification}.
	\begin{lemma}
		\label{lemma:densitiyValContinuousDensity}
		Let $1\le j\le n-1$. Valuations of the form
		\begin{align*}
			\mu(K)=\int_{S^{n-1}}f(v_n)dS_j(K,v)\quad\text{for all}~K\in\mathcal{K}(\R^n)
		\end{align*}
		for $f\in C([-1,1])$ form a dense subspace of $\Val_j(\R^n)^{\SO(n-1)}$.
	\end{lemma}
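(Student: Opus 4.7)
The plan is to combine the density of smooth $\SO(n-1)$-invariant valuations inside $\Val_j(\R^n)^{\SO(n-1)}$ with the Schuster--Wannerer classification of smooth spherical valuations (Theorem \ref{theorem:SchusterWannererClassification}), and then use equivariance to force the representing function to be zonal.

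In detail: given $\mu \in \Val_j(\R^n)^{\SO(n-1)}$, the preliminaries recalled before Theorem \ref{theorem:SchusterWannererClassification} provide a sequence $\mu_m \in \Val_j(\R^n)^{\infty,\SO(n-1)}$ with $\mu_m \to \mu$ in the norm topology (one takes any smooth approximation and averages it over $\SO(n-1)$ against the Haar measure). Since $\Val_j(\R^n)^{\infty,\SO(n-1)} \subset \Val_j(\R^n)^{\infty,\mathrm{sph}}$, Theorem \ref{theorem:SchusterWannererClassification} supplies a unique $f_m \in C^\infty_o(S^{n-1})$ with $\mu_m = E_j(f_m)$.

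Next I would use that $E_j$ is an $\SO(n)$-equivariant isomorphism: for any $g \in \SO(n-1) \subset \SO(n)$ we have $E_j(g \cdot f_m) = g \cdot E_j(f_m) = g \cdot \mu_m = \mu_m = E_j(f_m)$, so injectivity of $E_j$ forces $g \cdot f_m = f_m$. Hence $f_m$ is itself $\SO(n-1)$-invariant, i.e.\ zonal, and therefore of the form $f_m(v) = F_m(v_n)$ for some $F_m \in C([-1,1])$. Consequently
\begin{align*}
    \mu_m(K) = \int_{S^{n-1}} F_m(v_n) \, dS_j(K,v) \quad\text{for all } K \in \mathcal{K}(\R^n),
\end{align*}
which is a valuation of the form described in the statement. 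Letting $m \to \infty$ yields the desired density.

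I do not expect any real obstacle here: once the equivariance-plus-injectivity argument identifies $f_m$ as zonal, the rest is just reading off the conclusion from the approximation. The only subtlety is verifying that $\SO(n-1)$-invariance of $\mu_m$ is transported back to its representing density, and this follows immediately from the $\SO(n)$-equivariance of $E_j$ in Theorem \ref{theorem:SchusterWannererClassification}.
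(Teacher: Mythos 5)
Your proof is correct and matches the approach the paper implicitly takes: the paper labels this lemma a ``simple consequence'' of Theorem~\ref{theorem:SchusterWannererClassification} and relies on exactly the chain you spell out — density of $\Val_j(\R^n)^{\infty,\SO(n-1)}$ by Haar averaging, inclusion into smooth spherical valuations, the Schuster--Wannerer isomorphism $E_j$, and $\SO(n)$-equivariance plus injectivity of $E_j$ to identify the representing density as zonal. Your write-up supplies the details the paper leaves unstated.
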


	\begin{proof}[Proof of Theorem \ref{maintheorem:Classification}]
		The uniqueness statement follows from Corollary \ref{corollary:KernelPhij} and the fact that $\Phi_j$ is linear.\\
		Let $1\le j\le n-2$. Consider the map 
		\begin{align*}
			Z:\Val_j(\R^n)^{\SO(n-1)}&\rightarrow D^{\frac{n-j-1}{2}}\\
			\mu&\mapsto \frac{1}{\omega_{n-1}}J_{\frac{n-j-1}{2}}(\phi_\mu).
		\end{align*}
		Since $\Val_j(\R^n)^{\SO(n-1)}\rightarrow \mathcal{C}$, $\mu\mapsto \phi_\mu$, is well defined and continuous by Proposition \ref{proposition:EvaluationMapValToC} and $J_{\frac{n-j-1}{2}}:\mathcal{C}\rightarrow D^{\frac{n-j-1}{2}}$ is continuous by Proposition \ref{proposition:ContinutiyJa}, $Z$ is well defined and continuous.\\
		We consider the map
		\begin{align*}
			\Val_j(\R^n)^{\SO(n-1)}&\rightarrow \Val_j(\R^n)^{\SO(n-1)}\\
			\mu&\mapsto \mu- \Phi_j\circ Z(\mu).
		\end{align*}
		This map is well defined and continuous. By Lemma \ref{lemma:valuesPhionCones} and Lemma \ref{lemma:densitiyValContinuousDensity}, it vanishes on a dense subspace. Thus is has to vanish identically, so
		\begin{align*}
			\mu=\Phi_j\circ Z(\mu)
		\end{align*}
		for every $\mu\in \Val_{j}(\R^n)^{\SO(n-1)}$, which completes the proof.
	\end{proof}

	\begin{proof}[Proof of Theorem \ref{maintheorem:topologicalIsomorphism}]
		By Corollary \ref{corollary:KernelPhij}, the kernel of $\Phi_j$ is given by linear functions. Consequently $\Phi_j$ descends to a continuous linear map on the quotient, which is therefore injective. Theorem \ref{maintheorem:Classification} implies that this map is also onto. Note that both of these spaces are Banach spaces, compare Lemma \ref{lemma:DaBanach}. Thus $\Phi_j$ defines a continuous and bijective linear map between Banach spaces and is thus a topological isomorphism by the open mapping theorem.
	\end{proof}

\subsection{Proof of Theorem \ref{maintheorem:Classification} using the Hard Lefschetz Theorem}
	\label{section:ClassificationProof2}
	In this section we present a different proof of Theorem \ref{maintheorem:Classification} which relies on the Hard Lefschetz Theorem for spherical valuations, compare Proposition \ref{proposition:LefschetzInjectiveZonal}. In particular, it is implicitly also based on Theorem \ref{theorem:SchusterWannererClassification}. Nevertheless, the proof is of independent interest due to the fact that it provides a different perspective on the role of the cones $C_h$. We refer to the next section for a further application.\\
	
	We will call a convex body $L\in\mathcal{K}(\R^n)$ a body of revolution with axis $e_n$ if $L$ is $\SO(n-1)$-invariant.
	
	\begin{lemma}
		\label{lemma:vanishingConeImpliesBodiesRevolution}
		If $\mu\in \Val_j(\R^n)$ satisfies $\mu(C_h)=0$ for all $h\in\R$, then $\mu$ vanishes on all bodies of revolution with axis $e_n$.
	\end{lemma}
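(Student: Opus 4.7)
The plan is to approximate any body of revolution with axis $e_n$ by \emph{polytopes of revolution} (those whose profile in a half-plane through the axis is a convex polygon), decompose such polytopes into frusta via iterated applications of the valuation identity, and reduce each frustum to cones.

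First, by translation invariance and $j$-homogeneity, the hypothesis $\mu(C_h)=0$ for all $h\in\R$ implies that $\mu(\lambda C_h+te_n)=\lambda^j\mu(C_h)=0$ for every $\lambda\ge 0$, $t\in\R$, $h\in\R$. These bodies exhaust all cones of revolution with axis $e_n$ (apex above or below the base depending on the sign of $h$), and specializing to $h=0$ they include every horizontal disk $\lambda\D^{n-1}+te_n$.

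Next, let $L$ be a polytope of revolution and list the distinct heights $h_1<\dots<h_k$ of the vertices of its profile, with corresponding radii $\rho_1,\dots,\rho_k\ge 0$. For $1\le i\le k-1$, set $F_i:=L\cap\{h_i\le x_n\le h_{i+1}\}$. Each $F_i$ is a cone of revolution (if $\rho_i=0$ or $\rho_{i+1}=0$), a proper frustum (if $\rho_i,\rho_{i+1}>0$ are distinct), or a cylinder (if $\rho_i=\rho_{i+1}>0$). For a frustum, I extend its lateral surface to its apex to obtain two cones of revolution $K_i\supset K_i'$ with common apex, satisfying $K_i=F_i\cup K_i'$ and $F_i\cap K_i'=\rho_{i+1}\D^{n-1}+h_{i+1}e_n$. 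The valuation identity then yields
\begin{align*}
\mu(F_i)=\mu(K_i)-\mu(K_i')+\mu(F_i\cap K_i')=0
\end{align*}
by the first step. Cones $F_i$ fall under the first step directly, and cylinders arise as Hausdorff limits of frusta (letting the common apex tend to infinity), so by continuity $\mu(F_i)=0$ in every case.

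Now the slices $\bigcup_{i=1}^m F_i=L\cap\{h_1\le x_n\le h_{m+1}\}$ are convex, and this union meets $F_{m+1}$ precisely in the horizontal disk $D_{m+1}$ of radius $\rho_{m+1}$ at height $h_{m+1}$. A straightforward induction on $m$ using the valuation identity gives
\begin{align*}
\mu(L)=\sum_{i=1}^{k-1}\mu(F_i)-\sum_{i=2}^{k-1}\mu(D_i)=0,
\end{align*}
since each $\mu(D_i)$ also vanishes by the first step. Continuity of $\mu$ together with the density of polytopes of revolution among bodies of revolution (approximate the profile by a convex polygon in Hausdorff distance) completes the proof. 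The main technical care lies in treating degenerate profiles (cylinders, flat horizontal faces, segments on the axis) uniformly via continuity, but no analytic input beyond the valuation axiom and continuity is required.
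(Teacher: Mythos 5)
Your proposal is correct and follows essentially the same route as the paper: approximate a body of revolution by a piecewise-linear polytope of revolution, decompose it into horizontal slices (frusta), and observe that each frustum is the "difference" of two coaxial cones meeting in a disk, so the valuation identity forces $\mu$ to vanish on it; an induction over the slices then gives the result for polytopes, and continuity finishes. The paper packages the frustum step in Lemma \ref{lemma:relationTruncatedCones} (writing $D_{h,\epsilon}$ and $\tilde C_{h,\epsilon}$), while you derive it directly by completing the frustum to a cone; these are the same computation.
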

	\begin{proof}
		First, note that $\mu$ vanishes on all translates and rescaled copies of the disk $\D^{n-1}$ or the cones $C_h$ by translation invariance and homogeneity. In particular, $\mu$ vanish on rescaled and translated copies of sets of the form
		\begin{align*}
				D_{h,\epsilon}:=&\{x\in C_h: |x_n|\le  \epsilon|h|\}
		\end{align*}
		by Lemma \ref{lemma:relationTruncatedCones}. Now let $K\in\mathcal{K}(\R^n)$ be a body of revolution with axis $e_n$. Then there exist $a<b$ and a concave function $f:[a,b]\rightarrow\R$ such that
		\begin{align*}
			K=\{(x',x_n)\in\R^n: |x'|\le f(x_n)~\text{for}~x_n\in[a,b]\}.
		\end{align*}
		Let $f_m:[a,b]\rightarrow\R$ be the function that is affine on $\left[a+\frac{i-1}{m}(b-a),a+\frac{i}{m}(b-a)\right]$ with $f_m(a+\frac{i}{m}(b-a))=f(a+\frac{i}{m}(b-a))$ for $i=0,\dots, m$. Then it is easy to see that $f_m$ is concave, so
		\begin{align*}
			K_m=\{(x',x_n)\in\R^n: |x'|\le f_m(x_n)~\text{for}~x_n\in[a,b]\}
		\end{align*}
		is a convex body. Moreover, $(K_m)_m$ converges to $K$ in the Hausdorff metric. By construction, $K_m$ can be written as a union of rescaled translates of the sets $D_{h,\epsilon}$ such that the intersection of two such sets is a translated and rescaled copy of $\D^{n-1}$. By induction on the number of sets in such a decomposition, the valuation property thus implies $\mu(K_m)=0$ for all $m\in\mathbb{N}$ and therefore $\mu(K)=0$ by continuity.
	\end{proof}

	\begin{lemma}
		\label{lemma:vanishingCons1Homogenous}
		If $\mu\in \Val_1(\R^n)^{\SO(n-1)}$ vanishes on all bodies of revolution with axis $e_n$, then $\mu=0$.
	\end{lemma}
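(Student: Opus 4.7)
The strategy rests on the observation that every $\mu \in \Val_1(\R^n)$ is Minkowski linear, i.e.\ $\mu(K+L) = \mu(K) + \mu(L)$ for all $K, L \in \mathcal{K}(\R^n)$. Combined with $\SO(n-1)$-invariance, this will allow me to replace an arbitrary body $K$ by its Minkowski symmetrization under $\SO(n-1)$, which is a body of revolution with axis $e_n$, without changing $\mu(K)$.

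To establish Minkowski linearity, I would invoke McMullen's polynomial expansion theorem: $t \mapsto \mu(K+tL)$ is a polynomial in $t$ whose coefficient of $t^i$ is a valuation of bihomogeneity $(1-i,i)$ in $(K,L)$. On $\Val_1(\R^n)$ only the bidegrees $(1,0)$ and $(0,1)$ can appear. Using $1$-homogeneity and setting $K=\{0\}$ identifies the $t^1$-coefficient as $\mu(L)$, while setting $L=\{0\}$ identifies the constant term as $\mu(K)$. Thus $\mu(K+tL) = \mu(K) + t\mu(L)$, and in particular $\mu$ is Minkowski additive.

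Next, for arbitrary $K \in \mathcal{K}(\R^n)$ I would form the Minkowski symmetrization
\[
\bar K := \int_{\SO(n-1)} gK\,dg,
\]
constructed as the Hausdorff limit of discrete Minkowski averages $\frac{1}{N}\sum_{i=1}^N g_i K$ for an equidistributed net $\{g_i\}\subset \SO(n-1)$. By construction $\bar K$ is $\SO(n-1)$-invariant, hence a body of revolution with axis $e_n$. Using Minkowski additivity, $1$-homogeneity, $\SO(n-1)$-invariance of $\mu$, and continuity on $(\mathcal{K}(\R^n), d_H)$:
\[
\mu(\bar K) \;=\; \lim_{N\to\infty}\mu\Big(\tfrac{1}{N}\sum_{i=1}^N g_i K\Big) \;=\; \lim_{N\to\infty}\tfrac{1}{N}\sum_{i=1}^N \mu(g_i K) \;=\; \mu(K).
\]
The hypothesis that $\mu$ vanishes on every body of revolution with axis $e_n$ forces $\mu(\bar K)=0$, and hence $\mu(K)=0$.

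There is no serious obstacle: Minkowski linearity of $\Val_1(\R^n)$ is standard, the Minkowski integral is a classical construction, and the computation of $\mu(\bar K)$ is immediate once linearity is in hand. The lemma thus reduces to a short averaging argument that exploits the structural fact that elements of $\Val_1(\R^n)$ behave additively under Minkowski addition, a property unavailable in higher degree and precisely why this case serves as the base for the Lefschetz reduction in Section \ref{section:ClassificationProof2}.
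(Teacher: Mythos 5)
Your proof is correct and follows essentially the same approach as the paper: both arguments rest on Minkowski additivity of $1$-homogeneous translation invariant valuations, form the $\SO(n-1)$-Minkowski average of an arbitrary body $K$ (which is a body of revolution), approximate that average by finite Minkowski combinations of rotates $g_iK$, and then use additivity, invariance, and continuity to conclude $\mu(K)=\mu(\bar K)=0$. The only cosmetic differences are your use of an equidistributed net in place of the paper's explicit Riemann-sum partition of $\SO(n-1)$, and your derivation of Minkowski additivity from McMullen's bihomogeneous expansion rather than citing it directly.
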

	\begin{proof}
		Since $\mu$ is $1$-homogeneous, it is additive, that is, 
		\begin{align*}
			\mu(K+L)=\mu(K)+\mu(L),
		\end{align*}
		compare \cite[Remark 6.3.3]{SchneiderConvexbodiesBrunn2014}. Fix $K\in\mathcal{K}(\R^n)$ and consider the convex body $L\in\mathcal{K}(\R^n)$ defined by
		\begin{align*}
			h_L(y)=\int_{\SO(n-1)}h_K(g^{-1}y)dg,
		\end{align*}
		where we integrate with respect to the unique $\SO(n-1)$-invariant probability measure. Note that $L$ is a body of revolution with axis $e_n$ by construction, so $\mu(L)=0$ by assumption. We may approximate the integral by an appropriate Riemann sum: For $m\in\mathbb{N}$ choose a disjoint decomposition of $\SO(n-1)$ into a finite  cover $(B_{m,i})_{i=1}^{k_m}$ of Borel sets of diameter smaller then $\frac{1}{m}$ and fix $g_{m,i}\in B_{m,i}$. Then
		\begin{align*}
			h_L(y)=\int_{\SO(n-1)}h_L(g^{-1}y)dg=\lim_{m\rightarrow\infty}\sum_{i=1}^{k_m}h_K(g_{m,i}^{-1}y) \int_{B_{m,i}}dg,
		\end{align*}
		and the convergence is uniform in $y\in S^{n-1}$. In particular, the bodies $L_m$ defined by
		\begin{align*}
			h_{L_m}(y)=\sum_{i=1}^{k_m}h_K(g_{m,i}^{-1}y) \int_{B_{m,i}}dg
		\end{align*}
		converge to $L$ in the Hausdorff metric. Note that by the properties of support functions,
		\begin{align*}
			L_m=\sum_{i=1}^{k_m}\int_{B_{m,i}}dg\ g_{m,i}K,
		\end{align*}
		so since $\mu$ is additive and $\SO(n-1)$-invariant, 
		\begin{align*}
			\mu(L_m)=\sum_{i=1}^{k_m}\int_{B_{m,i}}dg\ \mu(g_{m,i}K)=\sum_{i=1}^{k_m}\int_{B_{m,i}}dg\ \mu(K)=\mu(K).
		\end{align*}
		Since $(L_m)_m$ converges to $L$, this implies
		\begin{align*}
			0=\mu(L)=\lim\limits_{m\rightarrow\infty}\mu(L_m)=\mu(K).
		\end{align*}
		Thus $\mu$ vanishes identically.
	\end{proof}

	\begin{corollary}
		\label{corollary:VanishingOnConvesVanishesIdentically}
		If $\mu\in \Val_j(\R^n)^{\SO(n-1)}$  satisfies $\mu(C_h)=0$ for all $h\in \R$, then $\mu=0$.
	\end{corollary}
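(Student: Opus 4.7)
The plan is to prove the statement by descending induction in disguise: reduce from degree $j$ to degree $j-1$ using the Lefschetz operator $\Lambda$ and the injectivity result in Proposition \ref{proposition:LefschetzInjectiveZonal}, with the base case $j=1$ handled directly by the two preceding lemmas.

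\textbf{Base case $j=1$.} Given $\mu\in\Val_1(\R^n)^{\SO(n-1)}$ with $\mu(C_h)=0$ for all $h\in\R$, Lemma \ref{lemma:vanishingConeImpliesBodiesRevolution} shows that $\mu$ vanishes on every convex body of revolution with axis $e_n$, and Lemma \ref{lemma:vanishingCons1Homogenous} then forces $\mu=0$.

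\textbf{Inductive step $j\ge 2$.} I would consider $\Lambda\mu\in\Val_{j-1}(\R^n)^{\SO(n-1)}$ and show it satisfies the same hypothesis, so that induction applies. The key observation is that the Euclidean ball $B_1(0)$ is $\SO(n-1)$-invariant, and $C_h$ is $\SO(n-1)$-invariant by construction, so $C_h+tB_1(0)$ is a body of revolution with axis $e_n$ for every $t\ge0$. By Lemma \ref{lemma:vanishingConeImpliesBodiesRevolution} applied to $\mu$, the function $t\mapsto \mu(C_h+tB_1(0))$ is identically zero, hence
\begin{align*}
	(\Lambda\mu)(C_h)=\frac{d}{dt}\Big|_{0}\mu(C_h+tB_1(0))=0\quad\text{for every } h\in\R.
\end{align*}
Invoking the inductive hypothesis at degree $j-1$ yields $\Lambda\mu=0$. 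Since $\Val_{j}(\R^n)^{\SO(n-1)}$ is contained in $\Val_j(\R^n)^{\mathrm{sph}}$ and $\Lambda$ is injective on spherical valuations by Proposition \ref{proposition:LefschetzInjectiveZonal}, this forces $\mu=0$.

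There is no substantial obstacle here: the two lemmas supply the geometric content, and the Lefschetz injectivity supplies the algebraic reduction; the only point requiring a moment of care is checking that the Minkowski sum $C_h+tB_1(0)$ inherits rotational symmetry about $e_n$ so that Lemma \ref{lemma:vanishingConeImpliesBodiesRevolution} is applicable to each body in the one-parameter family whose derivative at zero defines $\Lambda\mu(C_h)$.
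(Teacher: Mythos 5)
Your proof is correct and follows essentially the same approach as the paper: both rest on Lemma \ref{lemma:vanishingConeImpliesBodiesRevolution}, Lemma \ref{lemma:vanishingCons1Homogenous}, and the injectivity of $\Lambda$ on spherical valuations from Proposition \ref{proposition:LefschetzInjectiveZonal}. The only difference is packaging: the paper observes once that the class of bodies of revolution is closed under adding balls, so it passes directly from ``$\mu$ vanishes on bodies of revolution'' to ``$\Lambda^{j-1}\mu$ vanishes on bodies of revolution'' and then applies Lemma \ref{lemma:vanishingCons1Homogenous} and injectivity of $\Lambda^{j-1}$ in a single shot, whereas you reduce degree one step at a time, re-invoking Lemma \ref{lemma:vanishingConeImpliesBodiesRevolution} at each stage of the induction --- a cosmetic reorganization of the same argument.
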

	\begin{proof}
		By Lemma \ref{lemma:vanishingConeImpliesBodiesRevolution}, $\mu$ vanishes on all bodies of revolution with axis $e_n$. If $L$ is a body of revolution with axis $e_n$, then the same holds for $L+tB_1(0)$ for all $t\ge 0$. Thus $\mu(L+tB_1(0))=0$ for all $t\ge 0$, and in particular,
		\begin{align*}
			\Lambda^{j-1}\mu(L)=\frac{d^{j-1}}{dt^{j-1}}\Big|_0\mu(L+tB_1(0))=0.
		\end{align*}
		Thus $\Lambda^{j-1}\mu$ vanishes on all bodies of revolution with axis $e_n$. Since $\Lambda$ is $\SO(n)$-equivariant and $\mu$ is $\SO(n-1)$-invariant, $\Lambda^{j-1}\mu$ is $\SO(n-1)$-invariant. As $\Lambda^{j-1}\mu$ is $1$-homogeneous and vanishes on all bodies of revolution with axis $e_n$, it has to vanish identically by Lemma \ref{lemma:vanishingCons1Homogenous}. Since $\Lambda^{j-1}:\Val_j(\R^n)^{\SO(n-1)}\rightarrow
		\Val_1(\R^n)^{\SO(n-1)}$ is injective by Proposition \ref{proposition:LefschetzInjectiveZonal}, this shows that $\mu=0$.
	\end{proof}

	\begin{proof}[Second proof of Theorem \ref{maintheorem:Classification}]
		As in the proof in the previous section, the uniqueness statement follow from Corollary \ref{corollary:KernelPhij}.\\
		Let $\mu\in \Val_j(\R^n)^{\SO(n-1)}$. By Corollary \ref{corollary:ReplaceValuationOnCones} there exists a function $f\in D^{\frac{n-j-1}{2}}$ such that
		\begin{align*}
			\mu(C_h)=\Phi_j(f)[C_h]\quad\text{for all}~h\in \R.
		\end{align*}
		Consequently, $\tilde{\mu}:=\mu-\Phi_j(f)\in\Val_j(\R^n)^{\SO(n-1)}$ satisfies $\tilde{\mu}(C_h)=0$ for all $h\in \R$, and thus $\tilde{\mu}=0$ by Corollary 
		\ref{corollary:VanishingOnConvesVanishesIdentically}. Thus, $\mu=\Phi_j(f)$, which completes the proof.
	\end{proof}

\section{From valuations on convex bodies to convex functions}
	\label{section:FunctionalHadwiger}
	In this section we show that Theorem \ref{maintheorem:Classification} can be used to obtain a classification of certain $\SO(n)$-invariant valuations on the space $\Conv(\R^n,\R)$ of all convex functions $f:\R^n\rightarrow\R$. This space is naturally equipped with the topology of uniform convergence on compact subsets (which is equivalent to the topology induced by epi-convergence or pointwise convergence, compare \cite[Theorem
	7.17]{RockafellarWetsVariationalanalysis1998}), and we call a functional $\mu:\Conv(\R^n,\R)\rightarrow\R$ a valuation if 
	\begin{align*}
		\mu(f)+\mu(h)=\mu(\max(f,h))+\mu(\min(f,h))
	\end{align*}
	for all $f,h\in\Conv(\R^n,\R)$ such that the pointwise minimum belongs to $\Conv(\R^n,\R)$. Let $\VConv(\R^n)$ denote the space of all continuous valuations on $\Conv(\R^n,\R)$ that are dually epi-translation invariant, that is, that satisfy
	\begin{align*}
		\mu(f+\ell)=\mu(f)\quad\text{for all}~f\in\Conv(\R^n,\R),~\ell:\R^n\rightarrow\R~\text{affine}.
	\end{align*}
	As shown by Colesanti, Ludwig, and Mussnig \cite{ColesantiEtAlhomogeneousdecompositiontheorem2020}, this space admits a homogeneous decomposition mirroring McMullen's decomposition, 
	\begin{align*}
		\VConv(\R^n)=\bigoplus_{j=0}^n\VConv_j(\R^n),
	\end{align*}
	where $\mu\in \VConv_j(\R^n)$ if and only if $\mu(tf)=t^j\mu(f)$ for all $t\ge 0$ and $f\in\Conv(\R^n,\R)$. Let $\VConv_j(\R^n)^{\SO(n)}$ denote the subspace of all rotation invariant valuations, that is, all $\mu\in\VConv_j(\R^n)$ such that $\mu(f\circ g)=\mu(f)$ for all $g\in \SO(n)$, $f\in\Conv(\R^n,\R)$. The valuations belonging to this space were completely classified by Colesanti, Ludwig, and Mussnig  \cite{ColesantiEtAlHadwigertheoremconvex2020}. In order to state their result, let $C_b((0,\infty))$ denote the space of all continuous functions on $(0,\infty)$ with support bounded from above and consider for $1\le j\le n-1$ the subspace
	\begin{align*}
		D^{n}_j:=\left\{\zeta\in C_b((0,\infty)):\lim\limits_{t\rightarrow0}t^{n-j}\zeta(t)=0,~\lim\limits_{t\rightarrow0}\int_{t}^\infty\zeta(s)s^{n-j-1}ds~\text{exists and is finite}\right\}.
	\end{align*}
	Let us remark that this space is related to the semi-norm $\|\cdot\|_{n-j}$ from Section \ref{section:SingularValuationsEstimates}, compare \cite{KnoerrSingularvaluationsHadwiger2022}.
	Let us state the classification result by Colesanti, Ludwig, and Mussnig in the following form.
	\begin{theorem}
		\label{theorem:hadwigerVConv}
		Let $1\le j\le n-1$. 
		\begin{enumerate}
			\item[a)] For every $\zeta\in D^n_j$ and every $f\in\Conv(\R^n,\R)$ the limit
				\begin{align*}
					V^*_{j,\zeta}(f):=\lim\limits_{\epsilon\rightarrow0}\int_{\R^n\setminus B_\epsilon(0)}\zeta(|x|)d\Hess_j(f,x)
				\end{align*}
				exists, and this defines a continuous valuation $V^*_{j,\zeta}\in \VConv_j(\R^n)^{\SO(n)}$.
			\item[b)] For every $\mu\in \VConv_j(\R^n)^{\SO(n)}$ there exists a unique $\zeta\in D^n_j$ such that $\mu= V^*_{j,\zeta}$.
		\end{enumerate}
	\end{theorem}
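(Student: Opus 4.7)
The plan is to handle part (a) by mimicking Theorems~\ref{theorem:ExtensionPhi} and~\ref{theorem:PrincipalValue} of the present paper, using Proposition~\ref{prop:inequFuncIntVol} in place of Theorem~\ref{theorem:estimateIntegrationSj}, and to handle part (b) by lifting $\mu$ to a zonal valuation on convex bodies in $\R^{n+1}$ and invoking Theorem~\ref{maintheorem:Classification} in that ambient dimension.

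\textbf{Part (a).} For compactly supported $\zeta\in C_c((0,\infty))$, the functional $f\mapsto \int_{\R^n}\zeta(|x|)\,d\Hess_j(f,x)$ belongs to $\VConv_j(\R^n)^{\SO(n)}$. Valuation property, continuity, $j$-homogeneity, and $\SO(n)$-invariance follow from the corresponding properties of the Hessian measure $\Hess_j$ (essentially $\Psi_{\tilde\kappa^n_j}$). Dual epi-translation invariance follows because $\tilde\kappa^n_j$ contains only $dx_i$ and $dy_i$ and $\zeta(|x|)$ does not depend on $y$, so the pushforward of $D(f)$ under the $y$-translation induced by adding an affine function to $f$ leaves the integral unchanged. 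Proposition~\ref{prop:inequFuncIntVol} bounds the functional by a multiple of $\|\zeta\|_{n-j}$, which in turn is dominated by the $D^n_j$-norm. A density argument analogous to Lemma~\ref{lemma:ContFctDenseDj} extends the map by continuity to all of $D^n_j$, and a truncation estimate mirroring the proof of Theorem~\ref{theorem:PrincipalValue} identifies the extension with the claimed principal value integral.

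\textbf{Part (b).} Given $\mu \in \VConv_j(\R^n)^{\SO(n)}$, set $\tilde\mu(K) := \mu(h_K(\cdot,-1))$ for $K \in \mathcal{K}(\R^{n+1})$. I claim $\tilde\mu \in \Val_j(\R^{n+1})^{\SO(n)}$ (with $\SO(n)$ fixing $e_{n+1}$, i.e., acting as $\SO((n+1)-1)$ in the paper's convention). Continuity, $j$-homogeneity, and $\SO(n)$-invariance are immediate; translation invariance follows from dual epi-translation invariance of $\mu$ since $h_{K+v}(x,-1) = h_K(x,-1) + \langle v',x\rangle - v_{n+1}$ adds an affine function to $h_K(\cdot,-1)$; and the valuation property follows from the classical identity $h_K + h_L = h_{K\cup L} + h_{K\cap L}$ valid whenever $K \cup L$ is convex, which forces $\min(h_K(\cdot,-1), h_L(\cdot,-1)) = h_{K\cap L}(\cdot,-1)$. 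Theorem~\ref{maintheorem:Classification} applied in dimension $n+1$ then yields $f \in D^{(n-j)/2}$ (unique modulo linear functions) with $\tilde\mu = \Phi_j(f)$. Evaluating on cones using $h_{C_h}(x,-1) = \max(|x|,-h)$ and Lemma~\ref{lemma:valuesPhionCones} pins down $\mu$ on cone-type convex functions, and the change of variables $s = -(1+t^2)^{-1/2}$, which is exactly the substitution appearing in the proof of Theorem~\ref{theorem:estimateIntegrationSj}, converts the restriction $f|_{(-1,0)}$ into a function $\zeta \in D^n_j$. Proposition~\ref{propostion:Relation NormalDiffCycle} and the area-to-differential-cycle formula developed in Section~\ref{section:SingularValuationsEstimates} then identify $\tilde\mu = \Phi_j(f)$ with $K \mapsto V^*_{j,\zeta}(h_K(\cdot,-1))$ term by term, yielding $\mu = V^*_{j,\zeta}$ on all functions of the form $h_K(\cdot,-1)$; density of this class in $\Conv(\R^n,\R)$ (via Lipschitz approximation of an arbitrary finite convex function by infimal convolutions with $L|\cdot|$) extends the identity to all of $\Conv(\R^n,\R)$. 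Uniqueness of $\zeta$ follows from uniqueness of $f$ modulo linear functions, since the change of variables has trivial kernel under the centering condition.

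\textbf{Main obstacle.} The hardest step is the final translation: the datum $f \in D^{(n-j)/2}$ produced by Theorem~\ref{maintheorem:Classification} is defined on $(-1,1)$, but only $f|_{(-1,0)}$ is visible to $\tilde\mu$ because $h_K(\cdot,-1)$ sees only the restriction of the normal cycle to the lower hemisphere (Proposition~\ref{propostion:Relation NormalDiffCycle}); the restriction $f|_{(0,1)}$ is forced to be the unique one (modulo the linear ambiguity) making $\tilde\mu(C_h)$ constant for $h\geq 0$. Matching this linear ambiguity of $f$ with the absent ambiguity of $\zeta$, and verifying that the change of variables carries the two decay/integrability conditions defining $D^{(n-j)/2}$ near $s=-1$ bijectively onto those defining $D^n_j$ near $t = 0$ and $t = \infty$, is the principal technical step.
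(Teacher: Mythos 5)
Your proposal for part b) shares the key structural idea with the paper—lift $\mu$ to a valuation on $\mathcal{K}(\R^{n+1})$ via $K\mapsto\mu(h_K(\cdot,-1))$, identify $\SO(n)$-invariance, and use the cones $C_h\subset\R^{n+1}$ (which correspond to the functions $u_t$)—but you take a significantly more roundabout route that leaves a genuine gap. You apply the full classification Theorem~\ref{maintheorem:Classification} to $T(\mu)$ to obtain $f\in D^{(n-j)/2}$ with $T(\mu)=\Phi_j(f)$, and then try to convert $f$ into $\zeta$ via the substitution $s=-(1+t^2)^{-1/2}$ and verify $\Phi_j(f)=T(V^*_{j,\zeta})$ ``term by term.'' You correctly identify this translation as the ``principal technical step,'' but you do not carry it out, and it is nontrivial: $\Phi_j(f)$ sees all of $S^n$ while $V^*_{j,\zeta}(h_K(\cdot,-1))$ only sees the lower hemisphere via Proposition~\ref{propostion:Relation NormalDiffCycle}, the linear ambiguity of $f$ has to be matched against the uniqueness of $\zeta$, and the integrability conditions at $s=\pm1$ have to be shown to transform bijectively into those defining $D^n_j$ at $t=0,\infty$.

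The paper sidesteps all of this by \emph{not} invoking Theorem~\ref{maintheorem:Classification} at all. It picks $\zeta$ explicitly and directly from $\mu$ using the fact that the function $\phi(t):=\mu(u_t)$ lies in $C_c([0,\infty))$ and that the map $\mathcal{R}^{n-j}\colon D^n_j\to C_c([0,\infty))$ is a bijection (a cited lemma from \cite{ColesantiEtAlHadwigertheoremconvex}), combined with Lemma~\ref{lemma:values_ut}, which gives $V^*_{j,\zeta}(u_t)=\omega_n\binom{n}{j}\mathcal{R}^{n-j}(\zeta)[t]$. With $\zeta:=\frac{1}{\omega_n\binom{n}{j}}(\mathcal{R}^{n-j})^{-1}(\phi)$, one has $\mu(u_t)=V^*_{j,\zeta}(u_t)$ for all $t$ by construction, so the \emph{difference} $\tilde\mu:=\mu-V^*_{j,\zeta}$ vanishes on all $u_t$, hence $T(\tilde\mu)$ vanishes on all cones $C_h$, and then Corollary~\ref{corollary:VanishingOnConvesVanishesIdentically} (the much weaker ``vanishing on cones implies vanishing'' statement, not the full classification) kills $T(\tilde\mu)$. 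Injectivity of $T$ (from \cite{Knoerrsupportduallyepi2021}) finishes the argument. No change of variables between $f$ and $\zeta$, no density argument over $\Conv(\R^n,\R)$, and no handling of the linear ambiguity of $f$ is ever required. If you wish to salvage your approach, replace the detour through $\Phi_j(f)$ by a direct construction of $\zeta$ from $\phi_\mu$ on cones via $(\mathcal{R}^{n-j})^{-1}$; this is exactly what the paper does.

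One further remark: the paper does not prove part a) here at all—it attributes the existence and continuity of the principal-value representation $V^*_{j,\zeta}$ to \cite{KnoerrSingularvaluationsHadwiger2022}. Your sketch of a) is plausible in spirit but is not what the paper supplies, and in any case the paper's proof of b) takes a) as given.
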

	Here, $\Hess_j(f)$ denotes the $j$th Hessian measure of $f\in\Conv(\R^n,\R)$, compare \cite{ColesantiEtAlHadwigertheoremconvex2020}. Note that we have excluded $j=0$ and $j=n$, since the result takes a much simpler form in these cases. Due to the analogy of the result with Hadwiger's description of $\SO(n)$-invariant valuations in $\Val(\R^n)$ \cite{HadwigerVorlesungenuberInhalt1957}, the functionals $V^*_{i,\zeta}$ are also called \emph{functional intrinsic volumes}. Let us also remark that the representation in a) was established in \cite{KnoerrSingularvaluationsHadwiger2022}. Different representation formulas where obtained by Colesanti, Ludwig, and Mussnig in \cite{ColesantiEtAlHadwigertheoremconvex,ColesantiEtAlHadwigertheoremconvex2022}. Further proofs of Theorem \ref{theorem:hadwigerVConv} b) can be found in \cite{ColesantiEtAlHadwigertheoremconvex2023,KnoerrSingularvaluationsHadwiger2022}. \\
	
	The goal of this section is a proof of Theorem \ref{theorem:hadwigerVConv} b) using the results in Section \ref{section:ClassificationProof2}. We consider the map $\mathcal{R}^{n-j}:D^n_j\rightarrow C_c([0,\infty))$ given by
	\begin{align*}
		\mathcal{R}^{n-j}(\zeta)[t]=t^{n-j}\zeta(j)+(n-j)\int_t^\infty\zeta(s)s^{n-j-1}ds.
	\end{align*}
	By \cite[Lemma 3.7]{ColesantiEtAlHadwigertheoremconvex}, $\mathcal{R}^{n-j}$ is a well defined bijection.
	Consider the function $u_t\in \Conv(\R^n,\R)$ given for $t\ge0$ by $u_t(x)=\max(0,|x|-t)$ (and note that these functions are intimately related to the support functions of the cones $C_h$, compare \eqref{eq:relationUtCone} below).
	\begin{lemma}[\cite{ColesantiEtAlHadwigertheoremconvex2020} Lemma 2.15]
		\label{lemma:values_ut}
		For $\zeta\in D^n_j$, $t\ge0$: $V^*_{j,\zeta}(u_t)=\omega_n\binom{n}{j}\mathcal{R}^{n-j}(\zeta)[t]$.
	\end{lemma}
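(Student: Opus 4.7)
The plan is to compute $V^*_{j,\zeta}(u_t)$ by smooth radial approximation, reducing the problem to a one-variable integral against the Hessian density of a smooth radial convex function. Fix a family of smooth nondecreasing convex functions $g_s\in C^\infty([0,\infty))$ for $s\in(0,1)$, with $g_s\equiv 0$ on $[0,t]$, $g_s(r)=r-t$ on $[t+s,\infty)$, and $g_s$ a smooth convex interpolation on $[t,t+s]$. Set $u_t^{(s)}(x):=g_s(|x|)$. Then $u_t^{(s)}\to u_t$ uniformly on compact sets as $s\to 0^+$, hence $V^*_{j,\zeta}(u_t^{(s)})\to V^*_{j,\zeta}(u_t)$ by the continuity statement in Theorem \ref{theorem:hadwigerVConv}~a). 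So it suffices to evaluate $V^*_{j,\zeta}(u_t^{(s)})$ and send $s\to 0$.

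For the smooth convex $u_t^{(s)}$, the measure $\Hess_j(u_t^{(s)})$ is absolutely continuous with density $\sigma_j(D^2 u_t^{(s)})$, the $j$-th elementary symmetric function of the eigenvalues of $D^2 u_t^{(s)}$. For a radial convex function $x\mapsto g(|x|)$ those eigenvalues are $g''(|x|)$ in the radial direction and $g'(|x|)/|x|$ with multiplicity $n-1$ in the tangential directions, so
\begin{align*}
\sigma_j(D^2 u_t^{(s)})(x) = \binom{n-1}{j}\!\left(\frac{g_s'(|x|)}{|x|}\right)^{\!j} + \binom{n-1}{j-1}\!\left(\frac{g_s'(|x|)}{|x|}\right)^{\!j-1}\! g_s''(|x|).
\end{align*}
Rewriting $(g_s')^{j-1}g_s''=\tfrac{1}{j}\tfrac{d}{dr}(g_s')^j$ and passing to polar coordinates,
\begin{align*}
V^*_{j,\zeta}(u_t^{(s)}) = n\omega_n\!\int_t^\infty\!\zeta(r)\!\left[\binom{n-1}{j}(g_s'(r))^{j} r^{n-j-1} + \tfrac{1}{j}\binom{n-1}{j-1}\tfrac{d}{dr}\!\bigl((g_s'(r))^{j}\bigr)\, r^{n-j}\right]dr.
\end{align*}

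In the limit $s\to 0^+$, the first bracketed term converges to $\binom{n-1}{j}\int_t^\infty\zeta(r)r^{n-j-1}\,dr$: on $[t+s,\infty)$ the integrand is exactly $\binom{n-1}{j}\zeta(r)r^{n-j-1}$, while its contribution from the shrinking interval $[t,t+s]$ vanishes since the integrand is bounded. For the second term, the factor $\tfrac{1}{j}(d/dr)(g_s')^j$ is supported on $[t,t+s]$ with total integral $1/j$, and the weight $\zeta(r)r^{n-j}$ converges uniformly on $[t,t+s]$ to $\zeta(t)t^{n-j}$ for $t>0$ by continuity of $\zeta$ (for $t=0$ the boundary term is trivially absent thanks to $\lim_{r\to 0^+}r^{n-j}\zeta(r)=0$ from $\zeta\in D^n_j$). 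Hence this contribution tends to $\tfrac{1}{j}\binom{n-1}{j-1}\zeta(t)t^{n-j}$. Collecting and invoking the identity $\tfrac{n}{n-j}\binom{n-1}{j}=\binom{n}{j}=\tfrac{n}{j}\binom{n-1}{j-1}$ yields
\begin{align*}
V^*_{j,\zeta}(u_t)=\omega_n\binom{n}{j}\!\left[(n-j)\!\int_t^\infty\zeta(r)r^{n-j-1}\,dr+\zeta(t)t^{n-j}\right]=\omega_n\binom{n}{j}\mathcal{R}^{n-j}(\zeta)[t],
\end{align*}
as claimed. The main technical step is the limit of the second bracketed term, which rests on recognizing $\tfrac{1}{j}(g_s')^j$ as an antiderivative of $(g_s')^{j-1}g_s''$; once this is done, the fundamental theorem of calculus together with the uniform continuity of $\zeta(r)r^{n-j}$ on $[t,t+s]$ makes the limit explicit, and continuity of $V^*_{j,\zeta}$ on $\Conv(\R^n,\R)$ (which coincides with locally uniform convergence in the convex case) ensures the approximation captures $V^*_{j,\zeta}(u_t)$.
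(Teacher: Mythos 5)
The paper does not prove this lemma; it is quoted from Colesanti--Ludwig--Mussnig \cite{ColesantiEtAlHadwigertheoremconvex2020}, so there is no internal argument to compare against. On its own merits, your proof is essentially correct: approximating $u_t$ by smooth radial convex functions, identifying the density $\sigma_j(D^2)$ of $\Hess_j$, and writing the radial Hessian eigenvalues as $g''$ and $g'/r$ gives exactly the two-term integrand you wrote; the antiderivative trick $\frac{1}{j}\frac{d}{dr}(g_s')^j$ isolates a bump of total mass one on $[t,t+s]$; and the binomial identity $\frac{n-j}{n}\binom{n}{j}=\binom{n-1}{j}$, $\frac{j}{n}\binom{n}{j}=\binom{n-1}{j-1}$ produces the stated constant $\omega_n\binom{n}{j}$ in front of $\mathcal{R}^{n-j}(\zeta)[t]$. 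The use of continuity of $V^*_{j,\zeta}$ from Theorem~\ref{theorem:hadwigerVConv}~a) to pass from $u_t^{(s)}$ to $u_t$ is also fine.

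There is one small gap, at $t=0$. You handle the second bracketed term correctly using $\lim_{r\to 0^+}r^{n-j}\zeta(r)=0$, but for the first term your justification that ``the contribution from $[t,t+s]$ vanishes since the integrand is bounded'' fails when $t=0$: a function $\zeta\in D^n_j$ need not be bounded near $0$, and $\int_0^1|\zeta(r)|r^{n-j-1}\,dr$ need not be finite either (the definition only guarantees that the improper integral $\int_t^\infty\zeta(r)r^{n-j-1}\,dr$ has a finite limit). One clean fix is to integrate by parts against $\Psi(r):=\int_r^\infty\zeta(u)u^{n-j-1}\,du$, which extends continuously to $r=0$; this turns $\int_0^s\zeta(r)(g_s'(r))^j r^{n-j-1}\,dr$ into $-\Psi(s)+\int_0^s\Psi\,\frac{d}{dr}(g_s')^j\,dr$, which tends to $0$. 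Alternatively, observe that both $t\mapsto V^*_{j,\zeta}(u_t)$ and $t\mapsto\mathcal{R}^{n-j}(\zeta)[t]$ are continuous on $[0,\infty)$ (the former because $u_t$ depends continuously on $t$ and $V^*_{j,\zeta}$ is continuous, the latter by definition of $D^n_j$), so the case $t=0$ follows from $t>0$ by passing to the limit.
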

	Let $\mu\in\VConv_j(\R^n)$ be a given valuation. Since $t\mapsto u_t$ is a continuous curve in $\Conv(\R^n,\R)$ and $\mu$ is continuous, the function $\phi(t)=\mu(u_t)$ is continuous on $[0,\infty)$. 
	It was shown in \cite{Knoerrsupportduallyepi2021} that any $\mu\in\VConv_j(\R^n)$ is compactly supported in the following sense: There exists a compact subset $A$ of $\R^n$ with the property that whenever $f,h\in\Conv(\R^n,\R)$ coincide on a neighborhood of $A$, then $\mu(f)=\mu(h)$. In particular, for any $\mu\in \VConv_j(\R^n)$ there exists $R>0$ such that $u_t$ coincides with the zero function on a neighborhood of such a set for $t>R$, so $\mu(u_t)=\mu(0)$ for all $t> R$. If $j\ge 1$, $\mu(0)=0$, so $\phi\in C_c([0,\infty))$ has compact support.

	\begin{proof}[New proof of Theorem \ref{theorem:hadwigerVConv} b)]
		Let $1\le j\le n-1$ and $\mu\in\VConv_j(\R^n)^{\SO(n)}$. Consider the function $\phi(t):=\mu(u_t)$, which belongs to $C_c([0,\infty))$ by the previous discussion. Define $\zeta\in D^n_j$ by
		\begin{align*}
			\zeta:=\frac{1}{\omega_n\binom{n}{j}}\left(\mathcal{R}^{n-j}\right)^{-1}(\phi),
		\end{align*}
		which is well defined by \cite[Lemma 3.7]{ColesantiEtAlHadwigertheoremconvex}. Theorem \ref{theorem:hadwigerVConv} a) and Lemma \ref{lemma:values_ut} imply that $\mu(u_t)=V^*_{j,\zeta}(u_t)$ for all $t\in[0,\infty)$. Set $\tilde{\mu}:=\mu-V^*_{j,\zeta}$, so that $\tilde{\mu}(u_t)=0$ for all $t\in[0,\infty)$. We will show that $\tilde{\mu}=0$.\\
		
		Define $T(\tilde{\mu}):\mathcal{K}(\R^n\times\R)\rightarrow\R$ by
		\begin{align*}
			T(\tilde{\mu})[K]:=\tilde{\mu}(h_K(\cdot,-1)).
		\end{align*}
		By \cite[Theorem 3.3]{Knoerrsupportduallyepi2021}, $T(\tilde{\mu})\in\Val_j(\R^{n}\times\R)$ and $\tilde{\mu}=0$ if and only if $T(\tilde{\mu})=0$. By the basic properties of support functions, $T(\tilde{\mu})$ is $\SO(n)$-invariant, where we identify $\SO(n)$ with the subgroup of $\SO(n+1)$ that leaves the axis spanned by $e_{n+1}$ invariant, so $T(\tilde{\mu})\in \Val_j(\R^n\times\R)^{\SO(n)}$. In particular, Corollary \ref{corollary:VanishingOnConvesVanishesIdentically} shows that $T(\tilde{\mu})$ vanishes identically if it vanishes on the cones $C_h\subset \R^{n+1}$ for $h\in\R$. However, for $h\in\R$
		\begin{align}
			\label{eq:relationUtCone}
			\begin{split}
				h_{C_h}(\cdot,-1)=\max(h_{\{he_{n+1}\}}(\cdot,-1),h_{\D^n}(\cdot,-1))=\begin{cases}
					u_{-h}-h,  & h\le 0,\\
					|\cdot|, & h>0.
				\end{cases}
			\end{split}
		\end{align}
		In the second case, this equals $u_0$. Since $\tilde{\mu}$ is dually epi-translation invariant, we thus obtain
		\begin{align*}
			T(\tilde{\mu})[C_h]=\tilde{\mu}(h_{C_h}(\cdot,-1))=\begin{cases}
				\tilde{\mu}(u_{-h}), & h\le 0,\\
				\tilde{\mu}(u_0), & h>0,
			\end{cases}
		\end{align*}
		which vanishes in both cases. Corollary \ref{corollary:VanishingOnConvesVanishesIdentically} implies $T(\tilde{\mu})=0$ and thus $\tilde{\mu}=0$. This shows $\mu=V^*_{j,\zeta}$, which completes the proof.
	\end{proof}
	\begin{remark}
		Note that the proof given above uses Corollary \ref{corollary:VanishingOnConvesVanishesIdentically} to show that a valuation in $\VConv_j(\R^n)^{\SO(n)}$ vanishes identically as soon as it vanishes on the functions $u_t$, $t\ge 0$, and deduces Theorem \ref{theorem:hadwigerVConv} b) from this result. This approach was already pointed out by Colesanti, Ludwig, and Mussnig as a possible consequence of proof of \cite[Lemma 8.3]{ColesantiEtAlHadwigertheoremconvex2022} that does not rely on Theorem \ref{theorem:hadwigerVConv} b). In particular, the argument given above can be used to obtain this independent proof.
	\end{remark}
	
\bibliography{../../../library/library.bib}
\Addresses
\end{document}